\pgfplotsset{compat=newest}
\definecolor{navyblue}{rgb}{0.0, 0.0, 0.5}
\date{}
\newtheorem{theorem}{Theorem}
\newtheorem{lemma}[theorem]{Lemma}
\newtheorem{cor}[theorem]{Corollary}
\newtheorem{prop}[theorem]{Proposition}
\theoremstyle{definition} 
\newcommand{\<}{\langle{}}
\renewcommand{\>}{\rangle}
\newcommand{\ip}[2]{\llangle#1\hspace*{.5mm},#2\rrangle}
\newcommand{\dual}[2]{\<#1\hspace*{.5mm},#2\>}
\newcommand{\dualg}[2]{\<#1\hspace*{.5mm},#2\>^\nabla}
\newcommand{\duald}[2]{\<#1\hspace*{.5mm},#2\>^\mathrm{div}}
\newcommand{\dualgd}[2]{\<#1\hspace*{.5mm},#2\>^{\nabla\mathrm{div}}}
\newcommand{\vdual}[2]{(#1\hspace*{.5mm},#2)}
\newcommand{\diam}{\mathrm{diam}}
\newcommand{\wat}{\widehat}
\def\grad{\nabla}
\def\bz{\boldsymbol{z}}
\newcommand{\bL}{\ensuremath{\boldsymbol{L}}}
\def\tbu{\wat{\boldsymbol{u}}}
\def\dtu{{\delta\!\hat u}}
\def\tbw{\wat{\boldsymbol{w}}}
\def\tbv{\wat{\boldsymbol{v}}}
\def\tv{\hat{v}}
\def\tbv{\hat{\boldsymbol{v}}}
\def\ttau{\hat{\tau}}
\newcommand{\bg}{\boldsymbol{g}}
\newcommand{\bv}{\boldsymbol{v}}
\newcommand{\dbv}{\boldsymbol{\delta\!v}}
\newcommand{\bu}{\boldsymbol{u}}
\newcommand{\bw}{\boldsymbol{w}}
\newcommand{\dbu}{\boldsymbol{\delta\!u}}
\newcommand{\dtbu}{\boldsymbol{\delta\!\hat u}}
\newcommand{\uone}{{\boldsymbol{u}_1}}
\newcommand{\utwo}{{u_2}}
\newcommand{\uthree}{{\boldsymbol{u}_3}}
\newcommand{\ufour}{{u_4}}
\newcommand{\tuone}{{\hat u_1}}
\newcommand{\tutwo}{{\hat u_2}}
\newcommand{\tuthree}{{\hat u_3}}
\newcommand{\tufour}{{\hat u_4}}
\newcommand{\vone}{{\boldsymbol{v}_1}}
\newcommand{\vtwo}{{v_2}}
\newcommand{\vthree}{{\boldsymbol{v}_3}}
\newcommand{\vfour}{{v_4}}
\newcommand{\dvtwo}{{\delta\!v_2}}
\newcommand{\dvfour}{{\delta\!v_4}}
\newcommand{\gone}{{\boldsymbol{g}_1}}
\newcommand{\gtwo}{{g_2}}
\newcommand{\gthree}{{\boldsymbol{g}_3}}
\newcommand{\gfour}{{g_4}}
\newcommand{\uu}{\mathfrak{u}}
\newcommand{\duu}{\delta\!\mathfrak{u}}
\newcommand{\tuu}{\wat{\mathfrak{u}}}
\newcommand{\deltauu}{\delta\!\uu}
\newcommand{\vv}{\mathfrak{v}}
\newcommand{\ww}{\mathfrak{w}}
\newcommand{\UU}{\ensuremath{\mathfrak{U}}}
\newcommand{\tUU}{\ensuremath{\mathfrak{\wat U}}}
\newcommand{\VV}{\ensuremath{\mathfrak{V}}}
\newcommand{\bpsi}{\ensuremath{\boldsymbol{\psi}}}
\newcommand{\bphi}{\ensuremath{\boldsymbol{\phi}}}
\newcommand{\traceg}[1]{\mathrm{tr}_{#1}^{\mathrm{grad}}}
\newcommand{\traced}[1]{\mathrm{tr}_{#1}^{\mathrm{div}}}
\newcommand{\tracegd}[1]{\mathrm{tr}_{#1}^{\grad\mathrm{div}}}
\newcommand{\Tref}{{\wat T}}
\newcommand{\pPFgd}[1]{{\widetilde\Pi^{\grad\mathrm{div}}_{#1}}}
\newcommand{\PFgd}[1]{{\Pi^{\grad\mathrm{div}}_{#1}}}
\newcommand{\Hdiv}[1]{{\bH(\div\!,#1)}}
\newcommand{\Hcurl}[1]{{\bH(\bcurl,#1)}}
\newcommand{\Hdivz}[1]{{\bH_0(\div\!,#1)}}
\newcommand{\Hgdiv}[1]{{\bH(\grad\div\!,#1)}}
\newcommand{\Hgdivz}[1]{{\bH_0(\grad\div\!,#1)}}
\newcommand{\trHgd}[1]{{\bH^{\grad\div}(#1)}}
\newcommand{\trHgdz}[1]{{\bH_{0}^{\grad\div}(#1)}}
\newcommand{\bH}{\ensuremath{\boldsymbol{H}}}
\def\curl{{\rm curl\,}}
\DeclareMathOperator{\bcurl}{{\bf curl}}
\def\div{{\rm div\,}}
\newcommand{\ttt}{{\mathfrak{T}}}
\newcommand{\R}{\ensuremath{\mathbb{R}}}
\newcommand{\N}{\ensuremath{\mathbb{N}}}
\newcommand{\cI}{\ensuremath{\mathcal{I}}}
\newcommand{\nn}{\ensuremath{\boldsymbol{n}}}
\newcommand{\ff}{\boldsymbol{f}}
\newcommand{\cT}{\ensuremath{\mathcal{T}}}
\newcommand{\cS}{\ensuremath{\mathcal{S}}}
\newcommand{\cP}{\ensuremath{\mathcal{P}}}
\newcommand{\cE}{\ensuremath{\mathcal{E}}}
\newcommand{\cF}{\ensuremath{\mathcal{F}}}
\newcommand{\btau}{{\boldsymbol\tau}}
\newcommand{\dbtau}{{\boldsymbol{\delta\!\tau}}}
\title{DPG methods for a fourth-order div problem
\thanks{Supported by ANID-Chile through FONDECYT projects 1190009, 1210391}
\author{
Thomas~F\"uhrer$^\dagger$
\and
Pablo Herrera$^\dagger$
\and
Norbert Heuer\thanks{
Facultad de Matem\'aticas, Pontificia Universidad Cat\'olica de Chile,
Avenida Vicu\~na Mackenna 4860, Santiago, Chile,
email: {\tt \{tofuhrer,pcherrera,nheuer\}@mat.uc.cl}}}}
\begin{document}
\maketitle
\begin{abstract}
We study a fourth-order div problem and its approximation by
the discontinuous Petrov--Galerkin method with optimal test functions.
We present two variants, based on first and second-order systems. In both cases
we prove well-posedness of the formulation and quasi-optimal convergence of the
approximation. Our analysis includes the fully-discrete schemes with approximated test functions,
for general dimension and polynomial degree in the first-order case, and for two dimensions
and lowest-order approximation in the second-order case. Numerical results illustrate
the performance for quasi-uniform and adaptively refined meshes.

\bigskip
\noindent
{\em Key words}:  fourth-order div equation, fourth-order elliptic PDE,
discontinuous Petrov--Galerkin method, ultraweak formulation, trace operators, optimal test functions, Fortin operator

\noindent
{\em AMS Subject Classification}:
%74S05, %  	Mechanics of deformable solids: Finite element methods
%74K25, %  	Mechanics of deformable solids: Thin bodies, structures: Shells
35J35, %  	Variational methods for higher-order, elliptic equations
65N30, %  	Finite elements, Rayleigh-Ritz and Galerkin methods, finite methods
35J67  %  	Boundary values of solutions to elliptic PDE
%74K20, %  	Mechanics of deformable solids: Thin bodies, structures: Plates
\end{abstract}

%===================================================================================================
\section{Introduction}

There has been a sustained interest in the numerical analysis of fourth-order problems. Important
examples are some models for the mechanics of thin structures
\cite{Fluegge_60_SS,Kirchhoff_50_UGB,Love_88_SFV}, and more recently the quad-curl problem
related with Maxwell transmission eigenvalue problems \cite{CakoniCMS_10_IES,MonkS_12_FEM} and
magneto-hydrodynamics with hyper-resistivity \cite{Biskamp_00_MRP,ChaconSZ_07_SSP}.
In this paper, we study a fourth-order problem with the $(\grad\div)^2$-operator.
In two dimensions this operator is nothing but the quad-curl operator whose approximation
has been studied, e.g., in \cite{BrennerSS_17_HDM,HongHSX_12_DGM,Sun_16_MFQ}, to give a few references.
In \cite{FanLZ_19_MSF}, a mixed method for the very $(\grad\div)^2$-operator is considered.

The motivation of this work is to continue developing DPG techniques for fourth-order problems.
The DPG framework, short for discontinuous Petrov--Galerkin method with optimal test functions,
has been established by Demkowicz and Gopalakrishnan \cite{DemkowiczG_11_CDP}
to aim at automatic discrete stability of (Petrov) Galerkin schemes
for well-posed variational formulations. This is particularly relevant for
fluid flow and singularly perturbed problems, cf., e.g.,
\cite{ChanHBTD_14_RDM,DemkowiczH_13_RDM,FuehrerH_17_RCD,HeuerK_17_RDM,RobertsDM_15_DPG}.
The DPG approach has important advantages over traditional and mixed Galerkin settings.
First, any well-posed variational formulation is suitable. Second, this includes ultraweak formulations
where conforming approximations are easier to realize since only trace variables are affected.
Third, being a minimum residual method, DPG has built-in a posteriori error estimation and adaptivity,
cf.~\cite{CarstensenDG_14_PEC,DemkowiczFHT_21_DAP,DemkowiczGN_12_CDP}.

The development of stable formulations and characterization of traces is at the heart of
our studies of DPG formulations for thin structures,
\cite{FuehrerHH_21_TOB,FuehrerHN_19_UFK,FuehrerHN_DMS,FuehrerHS_20_UFR}.
Here we continue the DPG development for higher-order problems, considering a fourth-order div problem.
We present two variational formulations of ultraweak type based on systems of first and second order,
and analyze their approximations by the DPG method. As we will see, the analysis of the
first-order case is relatively straightforward, though including that of a saddle
point system with non-zero diagonal blocks. On the other hand, the second-order
formulation reduces the number of unknowns (though static condensation of field
variables is an option) but complicates the analysis of traces typically
appearing in ultraweak formulations.
This is particularly the case when analyzing the fully discrete setting that includes
the approximation of optimal test functions, cf.~\cite{GopalakrishnanQ_14_APD}.
In the end, the approximation of traces is identical in both settings.

Let us finish the introduction with presenting our model problem and giving a description of what follows.

Let $\Omega\subset\R^d$ ($d\ge 2$) be a bounded, simply connected Lipschitz domain
with boundary $\Gamma=\partial\Omega$.
For a given vector function $\ff$ we consider the problem of finding the vector function $\bu$
that satisfies
\begin{align} \label{prob_org}
   (\grad\div\!)^2\bu + \bu = \ff\quad\text{in}\ \Omega,\quad
   \div\bu = \bu\cdot\nn = 0\quad\text{on}\ \Gamma.
\end{align}
Here, $\nn$ indicates the exterior unit normal vector on $\Gamma$, and below will be used
generically along the boundary of any Lipschitz domain.
For ease of presentation we have selected homogeneous boundary conditions. This is not essential
and in fact, one of our numerical examples requires non-homogeneous traces on a part of the
boundary.
%Let us note that the presence
%of the reaction term and the homogeneity of boundary conditions simplify
%our presentation. Whereas the boundary conditions are easy to generalize to non-homogeneous data,
%the presence of the reaction term has the effect of eliminating the infinite-dimensional
%kernel of the operator. Adding another differential equation instead
%(usually present in quad-curl problems), this would imply some major changes in the analysis.

In the next section we recall and study trace operators and spaces
that stem from the $(\grad\div)^2$-operator.
For the first-order system these are classical operators, recalled in \S\ref{sec_traces_canonical},
whereas in \S\ref{sec_traces_gd} we introduce operators needed for the second-order system.
The first-order formulation, along with the general DPG method and a specific discrete setting,
is introduced and analyzed in Section~\ref{sec_DPG1}. Well-posedness of the formulation
and quasi-optimal convergence of the DPG method with optimal test functions and with
approximated test functions are stated in Theorems~\ref{thm_stab1},\ref{thm_DPG1}
and Corollary~\ref{cor_DPG1_discrete}. Here, the fully discrete case considers a general
space dimension and arbitrary polynomial degrees (though uniform for simplicity),
see~\S\ref{sec_discrete1}. Proofs are collected in \S\ref{sec_pf1}.
Section~\ref{sec_DPG2} considers the second-order system,
using the same setup as in Section~\ref{sec_DPG1}. Well-posedness of the formulation
and quasi-optimal convergence of the schemes are stated by Theorems~\ref{thm_stab2},\ref{thm_DPG2}
and Corollary~\ref{cor_DPG2_discrete}. The main difference in this case is that the fully
discrete setting is only analyzed for two space dimensions and lowest-order approximation,
see~\S\ref{sec_discrete2}. Proofs for the setup of the second-order system are collected
in \S\ref{sec_pf2}. In Section~\ref{sec_num} we present numerical results
for both DPG schemes, considering an example with smooth solution and an example with corner
singularity. In the latter case we also consider adaptive variants of both schemes.
In all cases we observe the expected convergence orders.

Throughout the paper, $a\lesssim b$ means that $a\le cb$ with a generic constant $c>0$
that is independent of involved functions and diameters of elements, except otherwise noted.
Similarly, we use the notation $a\gtrsim b$, and $a\simeq b$ means that $a\lesssim b$ and
$b\lesssim a$.

%===================================================================================================
\section{Trace spaces and norms} \label{sec_traces}

We consider a mesh $\cT=\{T\}$ consisting of general non-intersecting Lipschitz elements
so that $\bar\Omega=\cup\{\bar T;\; T\in\cT\}$, and denote by
$\cS=\{\partial T;\;T\in\cT\}$ its skeleton.

Throughout this section, let $\omega\subset\Omega$ denote a Lipschitz
domain, for simplicity simply connected, with boundary $\partial\omega$.
We use the standard $L_2$ spaces
$L_2(\omega)$, $\bL_2(\omega)=(L_2(\omega))^d$ of scalar and vector fields, respectively,
with generic norm $\|\cdot\|_\omega$ and $L_2(\omega)$-bilinear form $\vdual{\cdot}{\cdot}_\omega$.
We drop the index $\omega$ when $\omega=\Omega$. We also use the standard Sobolev spaces
$H^1(\omega)$ and $H^1_0(\omega)$ with norm $\|\cdot\|_{1,\omega}$. Furthermore,
\begin{align*}
   &\Hdiv{\omega} := \{\bv\in\bL_2(\omega);\; \div\bv\in L_2(\omega)\},\quad
   \Hdivz{\omega} := \{\bv\in\Hdiv{\omega};\; (\bv\cdot\nn)|_{\partial\omega}=0\},\\
   &\Hgdiv{\omega} := \{\bv\in\bL_2(\omega);\; \grad\div\bv\in \bL_2(\omega)\},\\
   &\Hgdivz{\omega} := \{\bv\in\Hgdiv{\omega};\;
                         (\bv\cdot\nn)|_{\partial\omega}=(\div\bv)|_{\partial\omega}=0\}
\end{align*}
with respective norms
\[
   \|\bv\|_{\div\!,\omega}^2 := \|\bv\|_\omega^2 + \|\div\bv\|_\omega^2,\quad
   \|\bv\|_{\grad\div\!,\omega}^2 := \|\bv\|_\omega^2 + \|\grad\div\bv\|_\omega^2,
\]
and dropping $\omega$ when $\omega=\Omega$.
We also need the corresponding product spaces, denoted in the same way but replacing $\omega$
with $\cT$, e.g., $\Hgdiv{\cT}=\Pi_{T\in\cT} \Hgdiv{T}$ with canonical product norm
$\|\cdot\|_{\grad\div\!,\cT}$. The $L_2(\cT)$-dualities are denoted by $\vdual{\cdot}{\cdot}_\cT$.

%===================================================================================================
\subsection{Canonical trace operators} \label{sec_traces_canonical}

We define operators
\begin{align*} %\label{trg}
   \traceg{}:\;
   \begin{cases}
      H^1(\cT) &\to\ \Hdiv{\cT}',\\
      \quad v    &\mapsto\ \dualg{\traceg{}(v)}{\btau}_\cS
                    := \vdual{v}{\div\btau}_\cT + \vdual{\grad v}{\btau}_\cT
   \end{cases}
\end{align*}
and
\begin{align*} %\label{trd}
   \traced{}:\;
   \begin{cases}
      \Hdiv{\cT} &\to H^1(\cT)' \\
      \quad \btau    &\mapsto\ \duald{\traced{}(\btau)}{v}_\cS
                    := \dualg{\traceg{}(v)}{\btau}_\cS.
   \end{cases}
\end{align*}
Analogously we define the canonical trace operators for $\omega\subset\Omega$
(instead of $\cT$) with support on $\partial\omega$,
$\traceg{\omega}:\;H^1(\omega)\to\Hdiv{\omega}'$ and
$\traced{\omega}:\;\Hdiv{\omega}\to H^1(\omega)'$. Of course, given
$v\in H^1(\omega)$ and $\btau\in\Hdiv{\omega}$,
$\traceg{\omega}(v)=v|_{\partial\omega}$ and
$\traced{\omega}(\btau)=(\btau\cdot\nn)|_{\partial\omega}$
are the classical traces living in the canonical Sobolev spaces
$H^{1/2}(\partial\omega)=\traceg{\omega}(H^1(\omega))$ and
$H^{-1/2}(\partial\omega)=\traced{\omega}(\Hdiv{\omega})$, respectively.

The statements of the following proposition are proved in
\cite[Theorem 2.3, Remark 2.5]{CarstensenDG_16_BSF}.

\begin{prop} \label{prop_can_jumps}
(i) If $v\in H^1(\cT)$, then
\[
   v\in H^1_0(\Omega) \quad\Leftrightarrow\quad
   \duald{\traced{}(\btau)}{v}_\cS = 0
   \quad\forall \btau\in \Hdiv{\Omega}
\]
and
\[
   v\in H^1(\Omega) \quad\Leftrightarrow\quad
   \duald{\traced{}(\btau)}{v}_\cS = 0
   \quad\forall \btau\in \Hdivz{\Omega}.
\]
(ii) If $\btau\in \Hdiv{\cT}$, then
\[
   \btau\in \Hdivz{\Omega} \quad\Leftrightarrow\quad
   \dualg{\traceg{}(v)}{\btau}_\cS = 0
   \quad\forall v\in H^1(\Omega)
\]
and
\[
   \btau\in \Hdiv{\Omega} \quad\Leftrightarrow\quad
   \dualg{\traceg{}(v)}{\btau}_\cS = 0
   \quad\forall v\in H^1_0(\Omega).
\]
\end{prop}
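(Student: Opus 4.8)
The plan is to reduce every equivalence to the defining formula
$\dualg{\traceg{}(v)}{\btau}_\cS = \vdual{v}{\div\btau}_\cT + \vdual{\grad v}{\btau}_\cT$,
which on a single element $T$ is just an integration-by-parts identity, and then to exploit the distributional characterizations of $H^1(\Omega)$ and $\Hdiv{\Omega}$ in terms of piecewise-smooth functions with matching jumps across the skeleton. Since $\duald{\traced{}(\btau)}{v}_\cS$ is defined to equal $\dualg{\traceg{}(v)}{\btau}_\cS$, parts (i) and (ii) concern the same bilinear pairing; the difference is only whether $v$ or $\btau$ is treated as the variable tested over a global space. Throughout I would invoke the references \cite[Theorem 2.3, Remark 2.5]{CarstensenDG_16_BSF} for the technical density and surjectivity facts, as the excerpt explicitly attributes the statements there.

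First, for the forward implications: if $v\in H^1_0(\Omega)$, take any $\btau\in\Hdiv{\Omega}$. Then on each $T$, $\vdual{v}{\div\btau}_T + \vdual{\grad v}{\btau}_T = \vdual{v}{\btau\cdot\nn}_{\partial T}$ by the standard Green identity (valid since $v\in H^1(T)$, $\btau\in\Hdiv{T}$), so summing over $T\in\cT$ gives $\dualg{\traceg{}(v)}{\btau}_\cS = \sum_T \vdual{v}{\btau\cdot\nn}_{\partial T}$. Because $v$ has a single-valued trace on interior faces (its trace from both sides agrees, $v\in H^1(\Omega)$) and $\btau\cdot\nn$ has a single-valued normal trace, the interior-face contributions cancel in pairs, leaving only the boundary term $\vdual{v}{\btau\cdot\nn}_\Gamma$, which vanishes since $v|_\Gamma = 0$. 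For the other three forward implications the argument is identical, distributing the cancellation/vanishing appropriately: $v\in H^1(\Omega)$ kills the interior faces while $\btau\in\Hdivz{\Omega}$ kills the boundary; $\btau\in\Hdivz{\Omega}$ tested against $v\in H^1(\Omega)$ is the same computation read the other way; and $\btau\in\Hdiv{\Omega}$ tested against $v\in H^1_0(\Omega)$ likewise.

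For the converse implications — the substantive direction — I would argue as follows. Given $v\in H^1(\cT)$ with $\duald{\traced{}(\btau)}{v}_\cS = 0$ for all $\btau\in\Hdiv{\Omega}$, I want $v\in H^1_0(\Omega)$. Since $v$ is piecewise $H^1$, it suffices to show that the distributional gradient of $v$ on $\Omega$ has no singular part on $\cS$ and that the global trace of $v$ on $\Gamma$ vanishes. Testing with $\btau\in C_c^\infty(\Omega)^d$ (smooth, compactly supported, hence in $\Hdiv{\Omega}$ with vanishing normal trace on $\Gamma$): $0 = \vdual{v}{\div\btau}_\cT + \vdual{\grad v}{\btau}_\cT$, which says exactly that $\pwgrad v$ (the piecewise gradient) equals the distributional gradient of $v$ — i.e., no jumps, so $v\in H^1(\Omega)$. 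Then testing with general $\btau\in\Hdiv{\Omega}$ and using the Green identity on $\Omega$ itself, $0 = \vdual{v}{\btau\cdot\nn}_\Gamma$ for all such $\btau$; since the normal trace operator $\Hdiv{\Omega}\to H^{-1/2}(\Gamma)$ is surjective, $v|_\Gamma = 0$, so $v\in H^1_0(\Omega)$. The three remaining converses are the analogous dual arguments: test $\btau$ against $v\in C_c^\infty(\Omega)$ to conclude $\grad\div\btau$ has no singular part (so $\btau\in\Hdiv{\Omega}$), then against all $v\in H^1(\Omega)$ to pin down $\btau\cdot\nn|_\Gamma = 0$ via surjectivity of $\traceg{\Omega}$ onto $H^{1/2}(\Gamma)$.

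The main obstacle is the regularity bookkeeping in the converse direction: one must be careful that the pairing $\dualg{\traceg{}(v)}{\btau}_\cS$ with only $v\in H^1(\cT)$ and $\btau\in\Hdiv{\cT}$ does not a priori localize to individual faces, so the "cancellation across interior faces" reasoning must be phrased through the distributional identity $\vdual{v}{\div\btau}_\cT + \vdual{\grad v}{\btau}_\cT = \langle\grad v, \btau\rangle_{\mathcal D'(\Omega)\times\mathcal D(\Omega)}$ rather than face by face, and then the jump-free conclusion extracted from density of smooth test fields. Once that identity is in place, the surjectivity of the classical trace operators does the rest. Since the excerpt cites \cite{CarstensenDG_16_BSF} for precisely this proposition, in the paper I would simply record the above as the idea and refer there for the full technical details.
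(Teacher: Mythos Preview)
The paper does not supply its own proof of this proposition; it simply cites \cite[Theorem~2.3, Remark~2.5]{CarstensenDG_16_BSF}. Your sketch is the standard argument that underlies that reference and is correct, modulo one slip: in the converse for part~(ii) you write ``conclude $\grad\div\btau$ has no singular part'' where you mean $\div\btau$ (testing against $v\in C_c^\infty(\Omega)$ shows the distributional divergence coincides with the piecewise one, hence $\btau\in\Hdiv{\Omega}$). With that corrected, your plan matches exactly what the cited result does, and your closing remark about deferring to \cite{CarstensenDG_16_BSF} is precisely what the paper itself does.
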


The canonical trace spaces are
\begin{align*}
   H^{1/2}(\partial\omega) &:= \traceg{\omega}(H^1(\omega)),\qquad
   H^{1/2}(\cS) := \traceg{}(H^1(\Omega)),\qquad
   H^{1/2}_{0}(\cS) := \traceg{}(H^1_0(\Omega)),
   \\
   H^{-1/2}(\partial\omega) &:= \traced{\omega}(\Hdiv{\omega}),\quad
   H^{-1/2}(\cS) := \traced{}(\Hdiv{\Omega}),\quad
   H^{-1/2}_{0}(\cS) := \traced{}(\Hdivz{\Omega})
\end{align*}
with respective operator norms
\begin{align*}
   \|\tv\|_{(\div\!,\omega)'}
   &:= \sup_{\|\btau\|_{\div\!,\omega}=1} \dualg{\tv}{\btau}_{\partial\omega}
   &&\hspace{-8em} (\tv\in H^{1/2}(\partial\omega)),\\
   \|\tv\|_{(\div\!,\cT)'} &:= \sup_{\|\btau\|_{\div\!,\cT}=1} \dualg{\tv}{\btau}_\cS
   &&\hspace{-8em} (\tv\in H^{1/2}(\cS)),\\
   \|\ttau\|_{(1,\omega)'} &:= \sup_{\|v\|_{1,\omega}=1} \duald{\ttau}{v}_{\partial\omega}
   &&\hspace{-8em} (\ttau\in H^{-1/2}(\partial\omega)),\\
   \|\ttau\|_{(1,\cT)'} &:= \sup_{\|v\|_{1,\cT}=1} \duald{\ttau}{v}_\cS
   &&\hspace{-8em} (\ttau\in H^{-1/2}(\cS)).
\end{align*}
Here, the dualities between the trace spaces and their corresponding test spaces
are taken to be consistent with the trace operator.
For instance, given $\tv\in H^{1/2}(\cS)$ and $\btau\in\Hdiv{\cT}$,
\[
   \dualg{\tv}{\btau}_\cS := \dualg{\traceg{}(v)}{\btau}_\cS
   \quad\text{with}\ v\in H^1(\Omega)\ \text{s.th.}\ \traceg{}(v)=\tv.
\]
Alternatively, the canonical trace norms are
\begin{align*}
   \|\tv\|_{1/2,\partial\omega}
   &:= \inf\,\{\|v\|_{1,\omega};\; v\in H^1(\omega),\ \traceg{\omega}(v)=\tv\}
   &&\hspace{-3em} (\tv\in H^{1/2}(\partial\omega)),\\
   \|\tv\|_{1/2,\cS} &:= \inf\,\{\|v\|_1;\; v\in H^1(\Omega),\ \traceg{}(v)=\tv\}
   &&\hspace{-3em} (\tv\in H^{1/2}(\cS)),\\
   \|\ttau\|_{-1/2,\partial\omega}
   &:= \inf\,\{\|\btau\|_{\div\!,\omega};\; \btau\in \Hdiv{\omega},\ \traced{\omega}(\btau)=\ttau\}
   &&\hspace{-3em} (\ttau\in H^{-1/2}(\partial\omega)),\\
   \|\ttau\|_{-1/2,\cS}
   &:= \inf\,\{\|\btau\|_{\div};\; \btau\in \Hdiv{\Omega},\ \traced{}(\btau)=\ttau\}
   &&\hspace{-3em} (\ttau\in H^{-1/2}(\cS)).
\end{align*}
The equivalence of the corresponding trace norms will be relevant.
In fact, they are pairwise identical as stated by the next proposition.

\begin{prop}\cite[Lemma~2.1]{CarstensenDG_16_BSF} \label{prop_can_norms}
%Let $\omega\subset\Omega$ be a Lipschitz domain.
The identities
\begin{align*}
   \|\tv\|_{1/2,\partial\omega} &= \|\tv\|_{(\div\!,\omega)'}
   &&\hspace{-8em} (\tv\in H^{1/2}(\partial\omega)),\\
   \|\tv\|_{1/2,\cS} &= \|\tv\|_{(\div\!,\cT)'}
   &&\hspace{-8em} (\tv\in H^{1/2}(\cS)),\\
   \|\ttau\|_{-1/2,\partial\omega} &= \|\ttau\|_{(1,\omega)'}
   &&\hspace{-8em} (\ttau\in H^{-1/2}(\partial\omega)),\\
   \|\ttau\|_{-1/2,\cS} &= \|\ttau\|_{(1,\cT)'}
   &&\hspace{-8em} (\ttau\in H^{-1/2}(\cS))
\end{align*}
hold true.
\end{prop}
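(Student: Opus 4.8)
The plan is to prove all four identities by the same two-step scheme, since each equates a minimal-extension (quotient) norm of a trace with the operator norm of the functional defined by the corresponding integration-by-parts pairing. I describe the scheme for $\|\tv\|_{1/2,\partial\omega}=\|\tv\|_{(\div\!,\omega)'}$. For one inequality, I would take any $v\in H^1(\omega)$ with $\traceg{\omega}(v)=\tv$ and any $\btau\in\Hdiv{\omega}$; the definition of $\traceg{\omega}$ together with two Cauchy--Schwarz inequalities (first on the $L_2(\omega)$ inner products, then in $\R^2$) gives
\[
  \dualg{\tv}{\btau}_{\partial\omega}
  = \vdual{v}{\div\btau}_\omega+\vdual{\grad v}{\btau}_\omega
  \le \|v\|_{1,\omega}\,\|\btau\|_{\div\!,\omega}.
\]
Dividing by $\|\btau\|_{\div\!,\omega}$, taking the supremum over $\btau$ and then the infimum over admissible $v$ yields $\|\tv\|_{(\div\!,\omega)'}\le\|\tv\|_{1/2,\partial\omega}$. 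The same one-line estimate, now testing with $v$ and pulling out $\|\btau\|_{\div\!,\omega}$, gives $\|\ttau\|_{(1,\omega)'}\le\|\ttau\|_{-1/2,\partial\omega}$; the two skeleton versions follow verbatim once one notes that $\|v\|_{1,\cT}=\|v\|_1$ for $v\in H^1(\Omega)$ and $\|\btau\|_{\div\!,\cT}=\|\btau\|_{\div}$ for $\btau\in\Hdiv{\Omega}$.

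For the reverse inequality I would exhibit a test field that attains the supremum. Since $\traceg{\omega}$ is bounded, its kernel $H^1_0(\omega)$ is closed, so the infimum defining $\|\tv\|_{1/2,\partial\omega}$ is attained by the unique minimal extension $v^\star$, characterized by $H^1(\omega)$-orthogonality to $H^1_0(\omega)$, i.e.\ $\vdual{v^\star}{w}_\omega+\vdual{\grad v^\star}{\grad w}_\omega=0$ for all $w\in H^1_0(\omega)$. Testing with $w\in C_c^\infty(\omega)$ shows $\div\grad v^\star=v^\star$ in $L_2(\omega)$, hence $\btau^\star:=\grad v^\star\in\Hdiv{\omega}$ with $\|\btau^\star\|_{\div\!,\omega}^2=\|\grad v^\star\|_\omega^2+\|v^\star\|_\omega^2=\|v^\star\|_{1,\omega}^2$, and, by the definition of the trace pairing, $\dualg{\tv}{\btau^\star}_{\partial\omega}=\|v^\star\|_\omega^2+\|\grad v^\star\|_\omega^2=\|v^\star\|_{1,\omega}^2$. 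Therefore $\|\tv\|_{(\div\!,\omega)'}\ge\dualg{\tv}{\btau^\star}_{\partial\omega}/\|\btau^\star\|_{\div\!,\omega}=\|v^\star\|_{1,\omega}=\|\tv\|_{1/2,\partial\omega}$, which with the first step gives the identity. For $\|\ttau\|_{-1/2,\partial\omega}=\|\ttau\|_{(1,\omega)'}$ I would take instead the minimal $\Hdiv{\omega}$-extension $\btau^\star$ of $\ttau$; its Euler--Lagrange equation tested with $\brho\in C_c^\infty(\omega)^d$ reads $\grad(\div\btau^\star)=\btau^\star$ distributionally, so $v^\star:=\div\btau^\star\in H^1(\omega)$ satisfies $\|v^\star\|_{1,\omega}=\|\btau^\star\|_{\div\!,\omega}$ and $\duald{\ttau}{v^\star}_{\partial\omega}=\|\btau^\star\|_{\div\!,\omega}^2$, which gives the matching lower bound.

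The two skeleton identities follow by repeating the second step with $\Omega$ in place of $\omega$; the only change is that the minimal extensions $v^\star\in H^1(\Omega)$ and $\btau^\star\in\Hdiv{\Omega}$ now satisfy their Euler--Lagrange relations only against the broken kernels $\Pi_{T\in\cT}H^1_0(T)$ and $\Pi_{T\in\cT}\Hdivz{T}$, so $\grad v^\star$ lies in $\Hdiv{\cT}$ and $\div\btau^\star$ in $H^1(\cT)$ rather than in the conforming spaces — but these broken fields are precisely the admissible test functions in the definitions of $\|\cdot\|_{(\div\!,\cT)'}$ and $\|\cdot\|_{(1,\cT)'}$, and all the norm identities are unaffected since they are element-by-element computations. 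The only step I do not expect to be completely mechanical is verifying the kernel identifications $\ker\traceg{}=\Pi_{T\in\cT}H^1_0(T)$ and $\ker\traced{}=\Pi_{T\in\cT}\Hdivz{T}$, together with the fact that a global minimal extension inherits these element-wise variational identities; after that, Cauchy--Schwarz and the computation of two norms finish the proof. (All four equalities are also Lemma~2.1 of \cite{CarstensenDG_16_BSF}.)
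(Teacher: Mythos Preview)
Your argument is correct and is precisely the standard one: the paper does not reprove this proposition but cites \cite[Lemma~2.1]{CarstensenDG_16_BSF}, and when it later proves the analogous identity for the $\grad\div$-trace (Proposition~\ref{prop_gd_norm}) it follows exactly your scheme---the trivial direction by definition of the trace operator, the reverse by constructing the Riesz/minimal-norm extension and reading off the optimal test function as its (piecewise) derivative. Your handling of the skeleton case, in particular the identification $\ker\traceg{}=\Pi_{T\in\cT}H^1_0(T)$ and the resulting element-wise Euler--Lagrange relation yielding a broken test function, is the right mechanism and needs no further justification.
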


%===================================================================================================
\subsection{A grad-div trace operator} \label{sec_traces_gd}

\begin{prop} \label{prop_gd} Let $\omega\subset\Omega$ be a Lipschitz domain. The norms
$\|\cdot\|_{\grad\div\!,\omega}$ and
\(
   \|\div\cdot\|_\omega + \|\cdot\|_{\grad\div\!,\omega}
\)
are equivalent in $\Hgdiv{\omega}$, with the non-trivial constant depending on $\omega$.
\end{prop}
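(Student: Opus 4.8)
The claimed equivalence of $\|\cdot\|_{\grad\div\!,\omega}$ and $\|\div\cdot\|_\omega+\|\cdot\|_{\grad\div\!,\omega}$ amounts to establishing the single estimate
\[
   \|\div\bv\|_\omega \lesssim \|\bv\|_\omega + \|\grad\div\bv\|_\omega
   \qquad\text{for all } \bv\in\Hgdiv{\omega},
\]
since the reverse inequality is trivial and both $\|\bv\|_\omega$ and $\|\grad\div\bv\|_\omega$ already appear on both sides. The plan is to set $\phi:=\div\bv\in L_2(\omega)$ and observe that $\grad\phi=\grad\div\bv\in\bL_2(\omega)$, so in fact $\phi\in H^1(\omega)$ with $\|\grad\phi\|_\omega=\|\grad\div\bv\|_\omega$. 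Thus the task reduces to a purely scalar statement: control $\|\phi\|_\omega$ by $\|\grad\phi\|_\omega$ together with some coarse quantity built from $\bv$ that is already on the right-hand side. The natural candidate is the average $\bar\phi:=|\omega|^{-1}\int_\omega\div\bv$, because by the Poincar\'e–Wirtinger inequality on the connected Lipschitz domain $\omega$ we have $\|\phi-\bar\phi\|_\omega\lesssim\|\grad\phi\|_\omega=\|\grad\div\bv\|_\omega$, and hence $\|\phi\|_\omega\lesssim\|\grad\div\bv\|_\omega+|\omega|^{1/2}|\bar\phi|$.

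It then remains to bound the single number $|\bar\phi|$, i.e.\ the mean value of $\div\bv$ over $\omega$, by $\|\bv\|_\omega+\|\grad\div\bv\|_\omega$. Here I would argue by a compactness/contradiction argument (a Peetre–Tartar type lemma), which is what forces the constant to depend on $\omega$. Suppose the bound fails; then there is a sequence $\bv_n\in\Hgdiv{\omega}$ with $\|\bv_n\|_{\grad\div\!,\omega}^2=\|\bv_n\|_\omega^2+\|\grad\div\bv_n\|_\omega^2\le 1$ but $\big|\int_\omega\div\bv_n\big|\to\infty$. By the previous paragraph this is impossible once $\|\div\bv_n\|_\omega$ is shown bounded — so instead one runs the contradiction on the full equivalence directly: assume $\|\div\bv_n\|_\omega=1$ while $\|\bv_n\|_\omega+\|\grad\div\bv_n\|_\omega\to 0$. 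Then $\bv_n\to 0$ in $\bL_2(\omega)$ and $\grad\div\bv_n\to 0$ in $\bL_2(\omega)$; writing $\phi_n=\div\bv_n$ we get $\grad\phi_n\to 0$ in $\bL_2(\omega)$, so $\phi_n$ is bounded in $H^1(\omega)$ and, up to a subsequence, converges strongly in $L_2(\omega)$ (Rellich) to some $\phi_\infty$ with $\grad\phi_\infty=0$, hence $\phi_\infty$ constant. On the other hand $\phi_n=\div\bv_n$ converges to $0$ in the distributional sense because $\bv_n\to 0$ in $\bL_2(\omega)$, forcing $\phi_\infty=0$; but $\|\phi_n\|_\omega=1$ yields $\|\phi_\infty\|_\omega=1$, a contradiction. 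This establishes the missing estimate and therefore the norm equivalence.

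The main obstacle — and the only genuinely non-routine point — is the step where one passes from $\bv_n\to 0$ in $\bL_2(\omega)$ to $\div\bv_n\to 0$ in the sense of distributions and reconciles it with the strong $L_2$-limit $\phi_\infty$ of $\phi_n=\div\bv_n$; one must be careful that the Rellich compactness applies to $\phi_n$ (which it does, since $\phi_n$ is bounded in $H^1(\omega)$ on a Lipschitz domain) and that testing $\div\bv_n$ against $C_c^\infty(\omega)$ functions and integrating by parts indeed annihilates the limit. Everything else — the Poincar\'e–Wirtinger inequality, the triangle inequality bookkeeping, and the observation that $\div\bv\in H^1(\omega)$ whenever $\bv\in\Hgdiv{\omega}$ — is standard. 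The $\omega$-dependence of the constant is inherent: both the Poincar\'e constant and the compactness argument are non-quantitative and geometry-dependent, which is exactly what the statement allows.
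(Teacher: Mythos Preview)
Your argument is correct. Both you and the paper start from the same observation, namely that $\bv\in\Hgdiv{\omega}$ forces $\div\bv\in L_2(\omega)$ (in fact $\div\bv\in H^1(\omega)$, since it is a distribution with $L_2$ gradient on a bounded Lipschitz domain). From there, however, the routes diverge. The paper simply notes that $\Hgdiv{\omega}$ is complete with respect to \emph{both} norms and that one dominates the other trivially, and then invokes the open mapping theorem to conclude equivalence. Your proof instead runs a concrete Peetre--Tartar compactness contradiction: normalize $\|\div\bv_n\|_\omega=1$, send $\|\bv_n\|_\omega+\|\grad\div\bv_n\|_\omega\to 0$, extract an $L_2$-limit of $\phi_n=\div\bv_n$ via Rellich, and reach a contradiction because the distributional limit must vanish.

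What each buys: the paper's argument is a two-line soft analysis, but entirely non-constructive and relying on completeness under both norms. Your approach is longer but more transparent about \emph{where} the $\omega$-dependence enters (the Rellich embedding), and it does not require separately checking completeness for the stronger norm. One minor presentational wrinkle: your opening line ``set $\phi:=\div\bv\in L_2(\omega)$'' reads as assuming the $L_2$-membership before justifying it; the logic is fine once one reads the next clause, but it would be cleaner to first state that $\div\bv$ is a distribution with $L_2$ gradient and then conclude $\div\bv\in H^1(\omega)\subset L_2(\omega)$.
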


\begin{proof}
Noting that $\grad\div\bv\in\bL_2(\omega)$ implies that $\div\bv\in L_2(\Omega)$
it follows that $\Hgdiv{\omega}$ is Banach with respect to both norms,
$\|\cdot\|_{\grad\div\!,\omega}$ and $\|\cdot\|_{\grad\div\!,\omega}+\|\div\cdot\|_\omega$.
The trivial bound
$\|\cdot\|_{\grad\div\!,\omega}\le \|\cdot\|_{\grad\div\!,\omega}+\|\div\cdot\|_\omega$
then implies the equivalence of both norms.
\end{proof}

Analogously as the canonical traces, we define traces induced by the operator $\grad\div$,
\begin{align*} %\label{trgd}
   \tracegd{}:\;
   \begin{cases}
      \Hgdiv{\cT} &\to\ \Hgdiv{\cT}',\\
      \qquad \bv    &\mapsto\ \dualgd{\tracegd{}(\bv)}{\btau}_\cS
                    := \vdual{\bv}{\grad\div\btau}_\cT - \vdual{\grad\div\bv}{\btau}_\cT,
   \end{cases}
\end{align*}
and analogously for $\omega\subset\Omega$ (instead of $\cT$)
$\tracegd{\omega}:\;\Hgdiv{\omega}\to\Hgdiv{\omega}'$
with support on $\partial\omega$,
and duality $\dualgd{\cdot}{\cdot}_{\partial\omega}$.
By Proposition~\ref{prop_gd}, $\bv\in\Hgdiv{\omega}$ implies that
$\bv\in\Hdiv{\omega}$ and $\div\bv\in H^1(\omega)$. Therefore,
$\tracegd{\omega}(\bv)$ has the two well-posed classical trace components
$\bv\cdot\nn|_{\partial\omega}$ and $(\div\bv)|_{\partial\omega}$, and we can write
\begin{align} \label{trace_rel}
   \dualgd{\tracegd{\omega}(\bv)}{\btau}_{\partial\omega}
   =
   \duald{\traced{\omega}(\bv)}{\div\btau}_{\partial\omega}
   -
   \dualg{\traceg{\omega}(\div\bv)}{\btau}_{\partial\omega}
   \quad\forall \bv,\btau\in\Hgdiv{\omega}.
\end{align}
Though, the separate canonical components are not uniformly bounded with respect to the domain
$\omega$ under consideration.
In the following, $\tracegd{\omega}(\bv)=(g_1,g_2)$ for $\bv\in\Hgdiv{\omega}$
means that $g_1=\traced{\omega}(\bv)=(\bv\cdot\nn)|_{\partial\omega}$ and
$g_2=\traceg{\omega}(\div\bv)=(\div\bv)|_{\partial\omega}$.

We proceed with considering the trace operator $\tracegd{}$
and its induced trace spaces.
We define local trace spaces
\begin{align*}
   \trHgd{\partial\omega} := \tracegd{\omega}(\Hgdiv{\omega})
\end{align*}
and the ``skeleton'' variants
\begin{align*}
   \trHgd{\cS} &:= \tracegd{}(\Hgdiv{\Omega}),\quad
   \trHgdz{\cS} := \tracegd{}(\Hgdivz{\Omega})
\end{align*}
provided with operator norms,
\begin{align*}
   \|\tbv\|_{(\grad\div\!,\omega)'}
   &:= \sup_{\|\btau\|_{\grad\div\!,\omega}=1} \dualgd{\tbv}{\btau}_{\partial\omega}
   \quad (\tbv\in \trHgd{\partial\omega}),\\
   \|\tbv\|_{(\grad\div\!,\cT)'} &:= \sup_{\|\btau\|_{\grad\div\!,\cT}=1} \dualgd{\tbv}{\btau}_\cS
   \quad (\tbv\in \trHgd{\cS}).
\end{align*}
As before, the dualities are defined to be consistent with the definition of the trace operator.
The canonical trace norms are
\begin{align*}
   &\|\tbv\|_{-1/2,1/2,\partial\omega}
   := \inf\,\{\|\bv\|_{\grad\div\!,\omega};\; \bv\in \Hgdiv{\omega},\ \tracegd{\omega}(\bv)=\tbv\}
   \quad (\tbv\in\trHgd{\partial\omega}),\\
   &\|\tbv\|_{-1/2,1/2,\cS}
   := \inf\,\{\|\bv\|_{\grad\div};\; \bv\in \Hgdiv{\Omega},\ \tracegd{}(\bv)=\tbv\}
   \qquad (\tbv\in\trHgd{\cS}),
\end{align*}
and in Proposition~\ref{prop_gd_norm} we will see that
$\|\cdot\|_{-1/2,1/2,\cS}=\|\cdot\|_{(\grad\div\!,\cT)'}$.

The following proposition relates the combined $\grad\div$ traces with the canonical
ones. We formulate it for an arbitrary Lipschitz domain but only will use it for $\Omega$.

\begin{prop} \label{prop_gd_trace} Let $\omega\subset\Omega$ be a Lipschitz domain.
The spaces
\[
    \bigl(\trHgd{\partial\omega}, \|\cdot\|_{-1/2,1/2,\partial\omega}\bigr),\quad
    \bigl(H^{-1/2}(\partial\omega), \|\cdot\|_{-1/2,\partial\omega}\bigr) \times
    \bigl(H^{1/2}(\partial\omega), \|\cdot\|_{1/2,\partial\omega}\bigr)
\]
are identical with equivalent norms and equivalence constants that may depend on $\omega$.
More specifically, the inclusion
$H^{-1/2}(\partial\omega)\times H^{1/2}(\partial\omega)\subset \trHgd{\partial\omega}$
is due to the existence of a bounded extension operator
$\cE_\omega:\; H^{-1/2}(\partial\omega)\times H^{1/2}(\partial\omega)\to \Hgdiv{\omega}$
with $\tracegd{\omega}\circ\cE_\omega=id$.
\end{prop}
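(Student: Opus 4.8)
The plan is to prove the two inclusions between $\trHgd{\partial\omega}$ and $H^{-1/2}(\partial\omega)\times H^{1/2}(\partial\omega)$ separately, together with the corresponding norm bounds. The inclusion $\trHgd{\partial\omega}\subset H^{-1/2}(\partial\omega)\times H^{1/2}(\partial\omega)$ is the easy direction: if $\tbv=\tracegd{\omega}(\bv)$ with $\bv\in\Hgdiv{\omega}$, then by Proposition~\ref{prop_gd} we have $\bv\in\Hdiv{\omega}$ and $\div\bv\in H^1(\omega)$, so $\tbv=(g_1,g_2)$ with $g_1=(\bv\cdot\nn)|_{\partial\omega}\in H^{-1/2}(\partial\omega)$ and $g_2=(\div\bv)|_{\partial\omega}\in H^{1/2}(\partial\omega)$. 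The norm estimate follows from the trace bounds $\|g_1\|_{-1/2,\partial\omega}\le\|\bv\|_{\div\!,\omega}$ and $\|g_2\|_{1/2,\partial\omega}\le\|\div\bv\|_{1,\omega}$, both of which are controlled by $\|\bv\|_{\grad\div\!,\omega}+\|\div\bv\|_\omega$, hence by $\|\bv\|_{\grad\div\!,\omega}$ up to the equivalence constant from Proposition~\ref{prop_gd}; taking the infimum over lifts $\bv$ gives $\|\tbv\|_{-1/2,\partial\omega}+\|g_2\|_{1/2,\partial\omega}\lesssim\|\tbv\|_{-1/2,1/2,\partial\omega}$.

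The reverse inclusion is the substantive part and amounts to constructing the bounded extension operator $\cE_\omega$. Given a pair $(g_1,g_2)\in H^{-1/2}(\partial\omega)\times H^{1/2}(\partial\omega)$, the goal is to produce $\bv\in\Hgdiv{\omega}$ with $(\bv\cdot\nn)|_{\partial\omega}=g_1$ and $(\div\bv)|_{\partial\omega}=g_2$, depending boundedly on the data. My approach would be a two-step lifting. First, lift $g_2$ to a scalar function $\phi\in H^1(\omega)$ with $\phi|_{\partial\omega}=g_2$ and $\|\phi\|_{1,\omega}\lesssim\|g_2\|_{1/2,\partial\omega}$ using the standard bounded right inverse of the Dirichlet trace. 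Now I need $\bv$ with $\div\bv=\phi$ and $\bv\cdot\nn|_{\partial\omega}=g_1$. For this to be solvable the compatibility condition $\int_\omega\phi=\langle g_1,1\rangle_{\partial\omega}$ must hold, which in general it does not; so I would first correct $\phi$ by subtracting a suitable multiple of a fixed bump function $\psi_0\in H^1_0(\omega)$ with $\int_\omega\psi_0=1$, replacing $\phi$ by $\phi-(\int_\omega\phi-\langle g_1,1\rangle_{\partial\omega})\psi_0$, which does not change the boundary trace $g_2$ and keeps the $H^1$-bound (using $|\langle g_1,1\rangle_{\partial\omega}|\lesssim\|g_1\|_{-1/2,\partial\omega}$). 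With the corrected $\phi$ satisfying the compatibility condition, the second step solves the $\div$-problem: by the standard theory (e.g.\ via the Bogovskii operator, or via an auxiliary Neumann problem $-\Delta w=\phi$ in $\omega$ with $\partial_n w=g_1$ on $\partial\omega$ and setting $\bv=-\grad w$), there exists $\bv\in\bL_2(\omega)$ with $\div\bv=\phi\in H^1(\omega)$, hence $\bv\in\Hgdiv{\omega}$, with $\bv\cdot\nn|_{\partial\omega}=g_1$ and $\|\bv\|_{\grad\div\!,\omega}\lesssim\|\phi\|_{1,\omega}+\|g_1\|_{-1/2,\partial\omega}\lesssim\|g_1\|_{-1/2,\partial\omega}+\|g_2\|_{1/2,\partial\omega}$. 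Setting $\cE_\omega(g_1,g_2):=\bv$ gives a bounded linear operator with $\tracegd{\omega}\circ\cE_\omega=id$, which establishes the inclusion and the bound $\|\tbv\|_{-1/2,1/2,\partial\omega}\le\|\cE_\omega\tbv\|_{\grad\div\!,\omega}\lesssim\|g_1\|_{-1/2,\partial\omega}+\|g_2\|_{1/2,\partial\omega}$.

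Combining the two directions, the identity map between the two spaces is bounded with bounded inverse, so they coincide with equivalent norms, the constants inheriting the $\omega$-dependence of Proposition~\ref{prop_gd} and of the Neumann/Bogovskii solvability estimate. I expect the main obstacle to be the careful handling of the compatibility condition in the $\div$-lifting: one must ensure the correction term does not destroy the boundary trace $g_2$ (hence the use of a function in $H^1_0(\omega)$) and that the functional $g_1\mapsto\langle g_1,1\rangle_{\partial\omega}$ is indeed bounded on $H^{-1/2}(\partial\omega)$, which holds because the constant function $1$ lies in $H^{1/2}(\partial\omega)$. A secondary technical point is selecting the right solvability result for the divergence equation with prescribed normal trace and $H^1$-regular right-hand side on a general Lipschitz domain; the Neumann-problem route is the cleanest since $H^1$ elliptic regularity is not needed — one only needs $\grad w\in\bL_2$ and $\div\grad w=-\phi\in H^1$, which is exactly the variational solution.
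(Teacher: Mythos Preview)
Your proposal is correct and uses the same underlying idea as the paper---both construct the extension as the gradient of a Neumann solution---but the organization differs in a useful way. The paper splits the data as $(g_1,0)+(0,g_2)$ and builds separate extensions $\bv_1,\bv_2$, each via a pair of nested Poisson problems; in the second step the normal trace acquires an unwanted constant $\beta$, which is then removed by invoking the first extension $\cE_\omega^1(\beta)$. You instead handle both components at once: lift $g_2$ to $\phi\in H^1(\omega)$, fix the single compatibility constraint $\int_\omega\phi=\langle g_1,1\rangle_{\partial\omega}$ by subtracting a multiple of a fixed bump in $H^1_0(\omega)$, and solve one Neumann problem. Your route is shorter (one Neumann solve instead of four Poisson solves plus a recursive correction), and the bump-function trick makes the compatibility adjustment transparent. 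The paper's decomposition, on the other hand, isolates the two trace components and may be preferable if one later needs the individual liftings $(g_1,0)\mapsto\bv_1$ and $(0,g_2)\mapsto\bv_2$ with specific structural properties (e.g.\ $(\grad\div)^2\bv_j=0$, which the paper obtains but you do not). Two minor points: your sign conventions in the Neumann step need tidying (with $\bv=\grad w$, $\Delta w=\phi$, $\partial_n w=g_1$ everything matches), and the Bogovski\u{\i} alternative you mention does not directly give the prescribed normal trace, so the Neumann route is indeed the right choice.
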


\begin{proof}
We already have seen trace identification \eqref{trace_rel} which shows the algebraic
inclusion $\trHgd{\partial\omega}\subset H^{-1/2}(\partial\omega)\times H^{1/2}(\partial\omega)$.

Now, to show the other inclusion and its continuity let
$(g_1,g_2)\in H^{-1/2}(\partial\omega)\times H^{1/2}(\partial\omega)$ be given.
We extend $\bg_1:=(g_1,0)\mapsto \bv_1\in\Hgdiv{\omega}$ and
$\bg_2:=(0,g_2)\mapsto \bv_2\in\Hgdiv{\omega}$ such that
$\tracegd{\omega}(\bv_j)=(\traced{\omega}(\bv_j),\traceg{\omega}(\div\bv_j))=\bg_j$,
and with boundedness
$\|\bv_1\|_{\grad\div\!,\omega}\lesssim \|g_1\|_{-1/2,\partial\omega}$
and
$\|\bv_2\|_{\grad\div\!,\omega}\lesssim \|g_2\|_{1/2,\partial\omega}$ ($j=1,2$).
%and $(\grad\div)^2\bv_j=\curl\bv_j=0$ ($j=1,2$).
%Here, in the proof of this proposition,
%relations $\lesssim$ are not uniform with respect to $\omega$.
The extension $(g_1,g_2)\mapsto \cE_\omega(g_1,g_2):= \bv:=\bv_1+\bv_2$ then satisfies
$\bv\in\Hgdiv{\omega}$ with $\tracegd{\omega}(\bv)=(g_1,g_2)$ and the bound
\[
   \|\tracegd{\omega}(\bv)\|_{-1/2,1/2,\partial\omega}
   \le
   \|\bv\|_{\grad\div\!,\omega}
   \lesssim \|g_1\|_{-1/2,\partial\omega} + \|g_2\|_{1/2,\partial\omega}.
\]
%and $(\grad\div)^2\bv=\curl\bv=0$, as claimed.
The equivalence of both norms then follows since
$\trHgd{\partial\omega}=H^{-1/2}(\partial\omega)\times H^{1/2}(\partial\omega)$ are Banach spaces.
\\
{\bf Extension of $\bg_1$.}
We define $\phi\in H^1_0(\omega)$ and $\psi\in H^1(\omega)$ as the solutions to
\begin{align*}
   &-\Delta\phi=\alpha\quad\text{in}\ \omega,\\
   &-\Delta\psi=\phi  \quad\text{in}\ \omega,\quad
   \partial_{\nn} \psi=-g_1\quad\text{on}\ \partial\omega,
\end{align*}
where $\alpha\in\R$ is chosen so that $\vdual{\phi}{1}_\omega-\duald{g_1}{1}_{\partial\omega}=0$.
Then we select $\bv_1:=-\grad\psi$ and observe that $\curl\bv_1=(\grad\div)^2\bv_1=0$
and $\tracegd{\omega}(\bv_1)=(\bv_1\cdot\nn,\div\bv_1)|_{\partial\omega}=(g_1,0)$. Furthermore,
\begin{align*}
   &\|\bv_1\|_\omega^2 = \|\grad\psi\|_\omega^2
   \lesssim \|\phi\|_\omega^2 + \|g_1\|_{-1/2,\partial\omega}^2,\\
   &\|\grad\div\bv_1\|_\omega^2 + \|\phi\|_\omega^2 = \|\grad\phi\|_\omega^2  + \|\phi\|_\omega^2
   \lesssim \|\grad\phi\|_\omega^2.
\end{align*}
It is not difficult to see that $\|\grad\phi\|_\omega\simeq |\alpha|\lesssim \|g_1\|_{-1/2,\partial\omega}$.
This finishes the extension of $\bg_1$.\\
{\bf Extension of $\bg_2$.}
We define $\phi\in H^1(\omega)$ and $\psi\in H^1(\omega)$ as the solutions to
\begin{align*}
   &-\Delta\phi=0\quad\text{in}\ \omega,\quad
   \phi=g_2\quad\text{on}\ \partial\omega,\\
   &-\Delta\psi=\phi  \quad\text{in}\ \omega,\quad
   \partial_{\nn} \psi=\beta\quad\text{on}\ \partial\omega,
\end{align*}
where $\beta\in\R$ is chosen so that $\vdual{\phi}{1}_\omega+\duald{\beta}{1}_{\partial\omega}=0$.
Let us denote the extension $\bg_1=(g_1,0)\mapsto\bv_1$ from the previous step as $\cE^1_\omega(g_1)$.
Then we define $\bv_2:=\cE^1_\omega(\beta)-\grad\psi$ and observe that
$\curl\bv_2=(\grad\div)^2\bv_2=0$
and $\tracegd{\omega}(\bv_2)=(0,g_2)$. Furthermore,
\begin{align*}
   &\|\bv_2\|_\omega^2 \le 2(\|\cE^1_\omega(\beta)\|_\omega^2 + \|\grad\psi\|_\omega^2)
   \lesssim |\beta|^2 + \|\phi\|_\omega^2 \lesssim |\beta|^2 + \|g_2\|_{1/2,\partial\omega}^2,\\
   &\|\grad\div\bv_2\|_\omega^2
   \le 2(\|\grad\div\cE^1_\omega(\beta)\|_\omega^2 + \|\grad\phi\|_\omega^2)
   \lesssim |\beta|^2 + \|g_2\|_{1/2,\partial\omega}^2.
\end{align*}
The bound $|\beta|\lesssim \|\phi\|_\omega\lesssim \|g_2\|_{1/2,\partial\omega}$ finally
proves the required boundedness of the extension $\bv_2$.
\end{proof}

The following propositions will be needed to analyze our second-order ultraweak formulation of
problem \eqref{prob_org}.

\begin{prop} \label{prop_gd_norm}
The identity
\begin{align*}
   \|\tbv\|_{-1/2,1/2,\cS} &= \|\tbv\|_{(\grad\div\!,\cT)'}\quad (\tbv\in \trHgd{\cS})
\end{align*}
holds true.
\end{prop}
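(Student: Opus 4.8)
The plan is to establish the identity $\|\tbv\|_{-1/2,1/2,\cS} = \|\tbv\|_{(\grad\div\!,\cT)'}$ by proving two inequalities, mimicking the standard argument for canonical trace norms (as in Proposition~\ref{prop_can_norms}). One direction is essentially trivial from the definitions, while the other relies on a minimum-norm extension characterization together with the orthogonality/duality structure of the broken space $\Hgdiv{\cT}$.

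For the easy direction, $\|\tbv\|_{(\grad\div\!,\cT)'} \le \|\tbv\|_{-1/2,1/2,\cS}$: take any $\bv \in \Hgdiv{\Omega}$ with $\tracegd{}(\bv) = \tbv$. Then for every $\btau \in \Hgdiv{\cT}$ with $\|\btau\|_{\grad\div\!,\cT} = 1$, integration by parts element by element gives $\dualgd{\tbv}{\btau}_\cS = \vdual{\bv}{\grad\div\btau}_\cT - \vdual{\grad\div\bv}{\btau}_\cT$, and since $\bv$ is in the unbroken space this broken bilinear form reduces (the skeleton terms cancel) so that $|\dualgd{\tbv}{\btau}_\cS| \le \|\bv\|_{\grad\div}$ by Cauchy--Schwarz. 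Taking the infimum over all such $\bv$ yields the bound.

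For the reverse inequality $\|\tbv\|_{-1/2,1/2,\cS} \le \|\tbv\|_{(\grad\div\!,\cT)'}$, the plan is to identify the minimum-norm extension with a variational problem. Let $\bv \in \Hgdiv{\Omega}$ realize the infimum defining $\|\tbv\|_{-1/2,1/2,\cS}$ — such a minimizer exists because the affine set $\{\bw\in\Hgdiv{\Omega} : \tracegd{}(\bw)=\tbv\}$ is closed, convex, nonempty, and $\|\cdot\|_{\grad\div}$ is a Hilbert-space norm; the minimizer is the element orthogonal (in the $\Hgdiv{\Omega}$ inner product) to the kernel $\Hgdivz{\Omega}$. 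Then one shows, using this orthogonality and the fact that $\bv$ lies in the unbroken space, that $\bv$ (viewed as an element of $\Hgdiv{\cT}$) attains the supremum in $\|\tbv\|_{(\grad\div\!,\cT)'}$ up to the factor $\|\bv\|_{\grad\div}$; concretely one produces a test function $\btau \in \Hgdiv{\cT}$ with $\dualgd{\tbv}{\btau}_\cS = \|\bv\|_{\grad\div}^2$ and $\|\btau\|_{\grad\div\!,\cT} = \|\bv\|_{\grad\div}$, so that $\|\tbv\|_{(\grad\div\!,\cT)'} \ge \|\bv\|_{\grad\div} = \|\tbv\|_{-1/2,1/2,\cS}$. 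The natural candidate, by analogy with Riesz-representation arguments for DPG trace norms, is essentially $\btau = \bv$ itself together with the self-duality $\dualgd{\tracegd{}(\bv)}{\bv}_\cS = \vdual{\bv}{\grad\div\bv}_\cT - \vdual{\grad\div\bv}{\bv}_\cT$; since the naive choice gives zero, one instead exploits that $\dualgd{\tbv}{\btau}_\cS$ depends only on $\tracegd{\cT}(\btau)$, and picks $\btau$ to be the $\Hgdiv{\cT}$-Riesz representative of the functional $\btau \mapsto \dualgd{\tbv}{\btau}_\cS$, then invokes Proposition~\ref{prop_gd_trace} on each element (or rather the characterization that the broken functional pairs nontrivially precisely against non-conforming test functions).

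The main obstacle I anticipate is handling the broken dual space $\Hgdiv{\cT}'$ correctly: unlike the canonical case, the operator $\grad\div$ acting on $\bL_2$ vector fields has a more delicate kernel structure (gradients of harmonic functions, etc.), so one must be careful that the minimum-norm extension and the Riesz-representation argument are compatible with the quotient by $\Hgdivz{\Omega}$ and that the skeleton duality is well-defined on the quotient. A clean way around this, which I would pursue, is to follow the template of \cite{CarstensenDG_16_BSF}: show that $\tracegd{}$ is a quotient map from $\Hgdiv{\Omega}$ onto its image with kernel $\Hgdivz{\Omega}$, hence an isometry from the quotient $\Hgdiv{\Omega}/\Hgdivz{\Omega}$ (with quotient norm, which is exactly $\|\cdot\|_{-1/2,1/2,\cS}$) onto $(\trHgd{\cS}, \|\cdot\|_{-1/2,1/2,\cS})$; then identify the operator norm $\|\cdot\|_{(\grad\div\!,\cT)'}$ with the same quotient norm via duality, using that $\Hgdivz{\Omega} = \{\bw \in \Hgdiv{\Omega} : \dualgd{\tracegd{}(\bw)}{\btau}_\cS = 0 \ \forall \btau \in \Hgdiv{\cT}\}$ (the analogue of Proposition~\ref{prop_can_jumps}) and the Hahn--Banach theorem. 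This reduces everything to a routine functional-analytic identification once the kernel characterization is in hand.
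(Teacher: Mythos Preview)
Your easy direction is correct and matches the paper. For the hard direction, you correctly identify the Riesz representative $\btau\in\Hgdiv{\cT}$ of the functional $\dualgd{\tbv}{\cdot}_\cS$ as the natural test function---this is precisely what the paper uses---but you stop short of the decisive step. Having $\btau$ with $\|\btau\|_{\grad\div\!,\cT}=\|\tbv\|_{(\grad\div\!,\cT)'}$ and $\dualgd{\tbv}{\btau}_\cS=\|\btau\|_{\grad\div\!,\cT}^2$ is not yet enough: you still need an \emph{unbroken} extension $\bv\in\Hgdiv{\Omega}$ with $\tracegd{}(\bv)=\tbv$ and $\|\bv\|_{\grad\div}\le\|\btau\|_{\grad\div\!,\cT}$. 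Your attempt to start instead from the minimum-norm extension $\bv$ and then manufacture a matching $\btau$ runs into exactly the circularity you notice (testing with $\bv$ itself gives zero, and any argument that $\|\btau\|_{\grad\div\!,\cT}\ge\|\bv\|_{\grad\div}$ is the inequality you are trying to prove). The paper's key insight, which your plan does not reach, is that the Riesz representative $\btau$ \emph{itself} encodes the extension: one solves a second variational problem in the unbroken space $\Hgdiv{\Omega}$ with right-hand side $\dbv\mapsto\dualgd{\tracegd{}(\dbv)}{\btau}_\cS$, and checks that its solution satisfies $\bv=\grad\div\btau$ piecewise on $\cT$. From this one reads off $\tracegd{}(\bv)=\tbv$ and $\|\bv\|_{\grad\div}=\|\btau\|_{\grad\div\!,\cT}$ directly.

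Your alternative Hahn--Banach/quotient route does not close as sketched either: the quotient norm $\|\cdot\|_{-1/2,1/2,\cS}$ lives on $\Hgdiv{\Omega}/\Hgdivz{\Omega}$, whereas the operator norm $\|\cdot\|_{(\grad\div\!,\cT)'}$ is a supremum over the strictly larger broken space $\Hgdiv{\cT}$. Duality alone identifies $(\Hgdiv{\Omega}/\Hgdivz{\Omega})'$ with the annihilator of $\Hgdivz{\Omega}$ in $\Hgdiv{\Omega}'$, not in $\Hgdiv{\cT}'$; bridging broken and unbroken test spaces is precisely what the constructive $\bv=\grad\div\btau$ step accomplishes, and no purely abstract functional-analytic substitute is apparent.
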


\begin{proof}
The proof of this result is by now standard,
cf.~\cite[Proofs of Lemma~2.2, Theorem~2.3]{CarstensenDG_16_BSF}
and \cite[Proofs of Lemma~3.2, Proposition 3.5]{FuehrerHN_19_UFK}. We only recall the essential
steps, following the setting from \cite{FuehrerHN_19_UFK}. The bound
$\|\cdot\|_{(\grad\div\!,\cT)'}\le\|\cdot\|_{-1/2,1/2,\cS}$ is immediate by definition
of the trace operator. For the other inequality let $\tbv\in \trHgd{\cS}$ be given.
We define $\btau\in \Hgdiv{\cT}$ as the solution to
\[
   \vdual{\btau}{\dbtau} + \vdual{\grad\div\btau}{\grad\div\dbtau}_\cT
   = \dualgd{\tbv}{\dbtau}_\cS\quad\forall\dbtau\in\Hgdiv{\cT}
\]
and $\btau\mapsto\bv\in\Hgdiv{\Omega}$ as the solution to
\[
   \vdual{\bv}{\dbv} + \vdual{\grad\div\bv}{\grad\div\dbv}
   = \dualgd{\tracegd{}(\dbv)}{\btau}_\cS\quad\forall\dbv\in\Hgdiv{\Omega}.
\]
It follows that $\bv=\grad\div\btau$ (piecewise on $\cT$),
leading to $\tracegd{}(\bv)=\tbv$ and
$\|\btau\|_{\grad\div\!,\cT}^2=\|\bv\|_{\grad\div}^2=\dualgd{\tbv}{\btau}_\cS$, so that
\[
   \|\tbv\|_{(\grad\div\!,\cT)'}
   \ge \frac {\dualgd{\tbv}{\btau}_\cS}{\|\btau\|_{\grad\div\!,\cT}}
   = \|\bv\|_{\grad\div} \ge \|\tbv\|_{-1/2,1/2,\cS}.
\]
\end{proof}

\begin{prop} \label{prop_gd_jumps}
If $\bv\in \Hgdiv{\cT}$, then
\[
   \bv\in \Hgdivz{\Omega} \quad\Leftrightarrow\quad
   \dualgd{\tracegd{}(\btau)}{\bv}_\cS = 0
   \quad\forall \btau\in \Hgdiv{\Omega}
\]
and
\[
   \bv\in \Hgdiv{\Omega} \quad\Leftrightarrow\quad
   \dualgd{\tracegd{}(\btau)}{\bv}_\cS = 0
   \quad\forall \btau\in \Hgdivz{\Omega}.
\]
\end{prop}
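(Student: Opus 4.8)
The plan is to pass through an integration-by-parts identity that expresses $\tracegd{}$ through the canonical operators $\traceg{}$, $\traced{}$ and then to invoke Proposition~\ref{prop_can_jumps}. For $\bv\in\Hgdiv{\cT}$ and $\btau\in\Hgdiv{\Omega}$, Proposition~\ref{prop_gd} gives $\bv|_T,\btau|_T\in\Hdiv{T}$ and $\div\bv|_T,\div\btau\in H^1(T)$, so elementwise integration by parts in $\dualgd{\tracegd{}(\btau)}{\bv}_\cS=\vdual{\btau}{\grad\div\bv}_\cT-\vdual{\grad\div\btau}{\bv}_\cT$ produces the skeleton counterpart of \eqref{trace_rel},
\begin{align} \label{eq_ibp_gd}
   \dualgd{\tracegd{}(\btau)}{\bv}_\cS
   = \dualg{\traceg{}(\div\bv)}{\btau}_\cS - \duald{\traced{}(\bv)}{\div\btau}_\cS
   \qquad (\bv\in\Hgdiv{\cT},\ \btau\in\Hgdiv{\Omega}).
\end{align}
I also use that $\bsigma\mapsto\dualg{\traceg{}(f)}{\bsigma}_\cS$ depends on $f\in H^1(\cT)$ only through the boundary traces $f|_{\partial T}$, $T\in\cT$, and that $\duald{\traced{}(\bsigma)}{f}_\cS=\dualg{\traceg{}(f)}{\bsigma}_\cS$.

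The forward implications follow at once from \eqref{eq_ibp_gd}. If $\bv\in\Hgdivz{\Omega}$ then $\div\bv\in H^1_0(\Omega)$ and $\bv\in\Hdivz{\Omega}$, so for every $\btau\in\Hgdiv{\Omega}$ (hence $\btau\in\Hdiv{\Omega}$, $\div\btau\in H^1(\Omega)$) the two terms on the right of \eqref{eq_ibp_gd} vanish by Proposition~\ref{prop_can_jumps}. Likewise, if $\bv\in\Hgdiv{\Omega}$ and $\btau\in\Hgdivz{\Omega}$ then $\div\bv\in H^1(\Omega)$, $\div\btau\in H^1_0(\Omega)$ and $\btau\in\Hdivz{\Omega}$, and again both terms vanish by Proposition~\ref{prop_can_jumps}.

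For the converse, assume the left-hand side of \eqref{eq_ibp_gd} vanishes for all $\btau\in\Hgdiv{\Omega}$ (resp.\ all $\btau\in\Hgdivz{\Omega}$). I would prove (a) $\bv\in\Hdivz{\Omega}$ (resp.\ $\bv\in\Hdiv{\Omega}$) and (b) $\div\bv\in H^1_0(\Omega)$ (resp.\ $\div\bv\in H^1(\Omega)$), which by Proposition~\ref{prop_gd} together give $\bv\in\Hgdivz{\Omega}$ (resp.\ $\bv\in\Hgdiv{\Omega}$). For (a), given $v\in H^1(\Omega)$ (resp.\ $v\in H^1_0(\Omega)$) I build a test field $\btau_v$ carrying only the second trace component: on each $T$ pick $w_T\in H^1(T)$ with $w_T|_{\partial T}=v|_{\partial T}$ and vanishing mean value, then $\btau_v|_T\in\Hdiv{T}$ with $\div\btau_v|_T=w_T$ and $\btau_v|_T\cdot\nn=0$ on $\partial T$ (solvable by the vanishing mean value). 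The zero normal traces glue, the piecewise divergence has boundary traces $v|_{\partial T}$, hence no interface jump, so $\btau_v\in\Hgdiv{\Omega}$ (resp.\ $\Hgdivz{\Omega}$, since then $v|_\Gamma=0$); inserting $\btau=\btau_v$ into \eqref{eq_ibp_gd} collapses it to $0=\dualg{\traceg{}(v)}{\bv}_\cS$, and letting $v$ vary, Proposition~\ref{prop_can_jumps} yields $\bv\in\Hdivz{\Omega}$ (resp.\ $\Hdiv{\Omega}$). For (b), given $\bsigma\in\Hdiv{\Omega}$ (resp.\ $\Hdivz{\Omega}$) I build $\btau_\bsigma$ with $\btau_\bsigma\cdot\nn|_{\partial T}=\bsigma\cdot\nn|_{\partial T}$ and $\div\btau_\bsigma=\psi$, where $\psi:=\sum_T\psi_T\in H^1_0(\Omega)$ with $\psi_T\in H^1_0(T)$ and $\int_T\psi_T=\int_T\div\bsigma$: on each $T$ solve for $\btau_\bsigma|_T\in\Hdiv{T}$ with divergence $\psi|_T$ and normal trace $\bsigma\cdot\nn|_{\partial T}$, compatible since $\int_T\psi=\int_T\div\bsigma=\int_{\partial T}\bsigma\cdot\nn$. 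Then $\btau_\bsigma\in\Hgdiv{\Omega}$ (resp.\ $\Hgdivz{\Omega}$), and \eqref{eq_ibp_gd} becomes $0=\dualg{\traceg{}(\div\bv)}{\bsigma}_\cS-\dualg{\traceg{}(\psi)}{\bv}_\cS$; the last term vanishes by part (a) and Proposition~\ref{prop_can_jumps} (as $\psi\in H^1_0(\Omega)\subset H^1(\Omega)$), so $\dualg{\traceg{}(\div\bv)}{\bsigma}_\cS=0$ for all admissible $\bsigma$, whence $\div\bv\in H^1_0(\Omega)$ (resp.\ $H^1(\Omega)$) by Proposition~\ref{prop_can_jumps}.

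The step I expect to be most delicate is the construction of the test fields $\btau_v$ and $\btau_\bsigma$ together with the verification that they actually lie in $\Hgdiv{\Omega}$, respectively $\Hgdivz{\Omega}$: that after gluing the elementwise pieces the normal component is continuous across interior faces, the piecewise divergence has no interface jump, and the prescribed homogeneous behaviour on $\Gamma$ holds. This rests on two elementary facts used on each element, namely that every $\tv\in H^{1/2}(\partial T)$ is the trace of a function in $H^1(T)$ with vanishing mean value, and that for any $\rho\in L_2(T)$ and $g\in H^{-1/2}(\partial T)$ with $\int_T\rho=\dual{g}{1}_{\partial T}$ there is $\bp\in\Hdiv{T}$ with $\div\bp=\rho$ and $\bp\cdot\nn=g$. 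Everything else is bookkeeping with \eqref{eq_ibp_gd} and Proposition~\ref{prop_can_jumps}.
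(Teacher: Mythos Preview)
Your argument is correct, and it takes a genuinely different route from the paper's proof. The paper proceeds in two steps for the direction ``$\Leftarrow$'': first it tests with $\bphi\in C_0^\infty(\Omega)^d$ and reads off from the distributional identity that $\grad\div\bv\in\bL_2(\Omega)$, i.e.\ $\bv\in\Hgdiv{\Omega}$; then, for the homogeneous boundary condition, it invokes the extension operator $\cE_\Omega$ of Proposition~\ref{prop_gd_trace} to produce, for arbitrary $(g_1,g_2)\in H^{-1/2}(\Gamma)\times H^{1/2}(\Gamma)$, a test field $\btau=\cE_\Omega(g_1,g_2)\in\Hgdiv{\Omega}$, and concludes $\bv\cdot\nn=\div\bv=0$ on $\Gamma$ from the vanishing of the boundary pairing. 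Your proof bypasses Proposition~\ref{prop_gd_trace} entirely: via the elementwise identity \eqref{eq_ibp_gd} you reduce both the interior regularity and the boundary conditions to the canonical jump characterizations of Proposition~\ref{prop_can_jumps}, at the cost of constructing the tailored test fields $\btau_v$ and $\btau_\bsigma$. The trade-off is that the paper's argument is shorter and reuses an already established extension result, whereas yours is more self-contained (it rests only on Proposition~\ref{prop_can_jumps} and the two elementary elementwise solvability facts you flag) and makes the reduction to the first-order trace theory completely explicit. Both the constructions and the gluing checks you identify as delicate are routine once one notes that an $H^1(T)$-extension with prescribed trace and zero mean is obtained by correcting any extension with a bump in $H^1_0(T)$.
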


\begin{proof}
The statements can be proved by standard techniques. Let us recall them briefly.
In both cases the direction ``$\Rightarrow$'' holds by integration by parts, taking
into account the inherent regularities of $\Hgdiv{\Omega}$ mentioned previously.
The direction ``$\Leftarrow$'' without boundary condition holds by the distributional
definition of the derivatives:
\begin{align*}
   \grad\div\bv(\bphi) := \vdual{\bv}{\grad\div\bphi}
   = \vdual{\grad\div\bv}{\bphi}_\cT + \dualgd{\tracegd{}(\bv)}{\bphi}_\cS
   = \vdual{\grad\div\bv}{\bphi}_\cT
   \quad\forall\bphi\in C_0^\infty(\Omega)^d,
\end{align*}
giving $\grad\div\bv\in \bL_2(\Omega)$. Now, to show the boundary condition in the first statement,
let $\bv\in\Hgdiv{\Omega}$ be given. We use Proposition~\ref{prop_gd_trace} to extend an arbitrary
datum $\bg=(g_1,g_2)\in H^{-1/2}(\Gamma)\times H^{1/2}(\Gamma)$ to
$\cE_\Omega(\bg)\in\Hgdiv{\Omega}$. By assumption, the fact that $\cE_\Omega$ is a right-inverse
of $\tracegd{\Omega}$ and relation \eqref{trace_rel}, we find that
\begin{align*}
   0 &= \dualgd{\tracegd{}(\cE_\Omega(\bg))}{\bv}_\cS
     = \dualgd{\tracegd{\Omega}(\cE_\Omega(\bg))}{\bv}_\Gamma\\
     &= \duald{g_1}{\div\bv}_\Gamma - \dualg{g_2}{\bv}_\Gamma
     = \dual{g_1}{\div\bv}_\Gamma - \dual{g_2}{\bv\cdot\nn}_\Gamma
\end{align*}
where $\dual{\cdot}{\cdot}_\Gamma$ denotes the duality between
$H^{-1/2}(\Gamma)$ and $H^{1/2}(\Gamma)$ (in any order) with $L_2(\Gamma)$ as pivot space.
We conclude that $\div\bv=\bv\cdot\nn=0$ on $\Gamma$.
\end{proof}

%===================================================================================================
\section{First-order variational formulation and DPG method} \label{sec_DPG1}

Denoting $\uone:=\bu$, we write \eqref{prob_org} as the first-order system
\begin{subequations} \label{prob_sys1}
\begin{align}
   \grad\ufour+\uone &= \ff, \label{sys1a}\\
   \div\uthree-\ufour  &=0,  \label{sys1b}\\
   \grad\utwo - \uthree &= 0, \label{sys1c}\\
   \div\uone - \utwo &=0 \label{sys1d}
\end{align}
\end{subequations}
in $\Omega$ subject to $\uone\cdot\nn=\utwo=0$ on $\Gamma$.
Testing \eqref{sys1a}--\eqref{sys1d} with piecewise smooth functions
$\vone$, $\vtwo$, $\vthree$, $\vfour$, respectively, and employing the trace operators
$\traceg{}$, $\traced{}$, we obtain
\begin{align*}
   \vdual{\uone}{\vone-\grad\vfour}_\cT
  -\vdual{\utwo}{\vfour+\div\vthree}_\cT
  -\vdual{\uthree}{\vthree+\grad\vtwo}_\cT
  -\vdual{\ufour}{\vtwo+\div\vone}_\cT
  &\quad\\
  +\duald{\traced{}(\uone)}{\vfour}_\cS
  +\dualg{\traceg{}(\utwo)}{\vthree}_\cS
  +\duald{\traced{}(\uthree)}{\vtwo}_\cS
  +\dualg{\traceg{}(\ufour)}{\vone}_\cS
  &=
   \vdual{\ff}{\vone}.
\end{align*}
We replace the traces by independent trace variables
\[
   \tuone = \traced{}(\uone),\
   \tutwo = \traceg{}(\utwo),\
   \tuthree = \traced{}(\uthree),\
   \tufour = \traceg{}(\ufour),
\]
and define the spaces
\begin{align*}
   \UU &:= \UU_0 \times \tUU\quad\text{with}\quad
   \UU_0:= \bL_2(\Omega)\times L_2(\Omega)\times \bL_2(\Omega)\times L_2(\Omega),\\
   &\hspace{8em}
   \tUU := H^{-1/2}_{0}(\cS)\times H^{1/2}_{0}(\cS)\times H^{-1/2}(\cS)\times H^{1/2}(\cS),
   \quad\text{and}\\
   \VV(\cT) &:= \Hdiv{\cT}\times H^1(\cT)\times \Hdiv{\cT}\times H^1(\cT)
\end{align*}
with (squared) norms
\begin{align*} %\label{norm_U1}
   &\|(\uone,\utwo,\uthree,\ufour,\tuone,\tutwo,\tuthree,\tufour)\|_{\UU}^2
   := \|(\uone,\utwo,\uthree,\ufour)\|^2 + \|(\tuone,\tutwo,\tuthree,\tufour)\|_{\tUU}^2
   \\
   &:=
   \|\uone\|^2 + \|\utwo\|^2 + \|\uthree\|^2 + \|\ufour\|^2
   + \|\tuone\|_{-1/2,\cS}^2 + \|\tutwo\|_{1/2,\cS}^2 +
   \|\tuthree\|_{-1/2,\cS}^2 + \|\tufour\|_{1/2,\cS}^2
\end{align*}
and
\begin{align*} %\label{norm_V1}
   \|(\vone,\vtwo,\vthree,\vfour)\|_{\VV(\cT)}^2
   &:=
   \|\vone\|_{\div\!,\cT}^2 + \|\vtwo\|_{1,\cT}^2 + \|\vthree\|_{\div\!,\cT}^2 + \|\vfour\|_{1,\cT}^2,
\end{align*}
respectively.
Then, our first variational formulation of \eqref{prob_org} reads:\\
\emph{Find $\uu=(\uone,\utwo,\uthree,\ufour,\tuone,\tutwo,\tuthree,\tufour)\in\UU$ such that}
\begin{align} \label{VF1}
   b(\uu,\vv) = L(\vv)\quad\forall\vv=(\vone,\vtwo,\vthree,\vfour)\in \VV(\cT),
\end{align}
in operator form
\[
   \uu\in\UU:\quad B\uu = L\quad\text{in}\ \VV(\cT)'.
\]
Here,
\begin{align*}
   b(\uu,\vv)
   :=&
   \vdual{\uone}{\vone-\grad\vfour}_\cT
  -\vdual{\utwo}{\vfour+\div\vthree}_\cT
  -\vdual{\uthree}{\vthree+\grad\vtwo}_\cT
  -\vdual{\ufour}{\vtwo+\div\vone}_\cT
  \\
  &+\duald{\tuone}{\vfour}_\cS
  +\dualg{\tutwo}{\vthree}_\cS
  +\duald{\tuthree}{\vtwo}_\cS
  +\dualg{\tufour}{\vone}_\cS
\end{align*}
and
\[
   L(\vv) := \vdual{\ff}{\vone}.
\]

\begin{theorem} \label{thm_stab1}
For given $\ff\in \bL_2(\Omega)$ there exists
a unique solution $\uu=(\uone,\utwo,\uthree,\ufour,\tuone,\tutwo,\tuthree,\tufour)\in\UU$
to \eqref{VF1}. It is uniformly bounded,
\[
   \|\uu\|_{\UU} \lesssim \|\ff\|
\]
with a hidden constant that is independent of $\ff$ and $\cT$.
Furthermore, $\bu:=\uone$ solves \eqref{prob_org}.
\end{theorem}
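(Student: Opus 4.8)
The plan is to establish well-posedness of \eqref{VF1} via the standard DPG machinery, i.e., by verifying the hypotheses of the Babu\v{s}ka--Ne\v{c}as theorem for the bilinear form $b(\cdot,\cdot)$ on $\UU\times\VV(\cT)$: boundedness, injectivity of the adjoint $B':\VV(\cT)\to\UU'$, and an inf-sup condition $\|B\uu\|_{\VV(\cT)'}\gtrsim\|\uu\|_\UU$. Boundedness of $b$ is routine from the Cauchy--Schwarz inequality and the definitions of the trace norms as operator norms (each duality term is controlled by the product of the trace norm and the corresponding graph norm of the test component). The main work is the inf-sup bound, and the cleanest route is the now-standard argument of Carstensen--Demkowicz--Gopalakrishnan: one proves the corresponding \emph{global} (broken-test-space-free) statement and then uses the trace characterizations of Section~\ref{sec_traces} to transfer it to the mesh-dependent setting.

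\textbf{Key steps.} First I would analyze the associated \emph{unbroken} formulation, replacing $\VV(\cT)$ by $\VV(\Omega)=\Hdivz{\Omega}\times H^1_0(\Omega)\times\Hdiv{\Omega}\times H^1(\Omega)$ (the test spaces whose traces vanish against the trace variables by Proposition~\ref{prop_can_jumps}), in which all skeleton terms drop out and \eqref{VF1} becomes an $L_2$-problem for $(\uone,\utwo,\uthree,\ufour)$ only. Here the crucial structural feature is the saddle-point form with non-zero diagonal block: grouping $(\uone,\ufour)$ against $(\vone,\vfour)$ and $(\utwo,\uthree)$ against $(\vtwo,\vthree)$, the ``$\uone\cdot\vone$'' term supplies a genuine $L_2$-coercive block on part of the space, while the off-diagonal operator $(\utwo,\uthree)\mapsto(\grad\div\text{-type coupling})$ must be shown to satisfy an inf-sup condition. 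Concretely, I would verify that the homogeneous adjoint problem forces $\vone,\vfour,\vtwo,\vthree$ to solve \eqref{prob_org} with zero data (in the appropriate weak sense), hence vanish by uniqueness for the fourth-order div problem; the existence/uniqueness for \eqref{prob_org} itself follows from Lax--Milgram applied to the bilinear form $(\grad\div\bu,\grad\div\bv)+(\bu,\bv)$ on $\Hgdivz{\Omega}$, which is coercive by its very definition. Solvability of the adjoint then gives well-posedness of the unbroken problem, with the bound $\|\uu\|\lesssim\|\ff\|$. Second, I would invoke the general DPG ``broken'' principle (as in Carstensen--Demkowicz--Gopalakrishnan): since the trace spaces $\tUU$ are \emph{exactly} the images of the canonical trace operators with the operator/minimal-extension norms (Propositions~\ref{prop_can_norms}), well-posedness of the unbroken formulation is equivalent to well-posedness of the broken one \eqref{VF1}, with a uniform (mesh-independent) stability constant. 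Finally, the identification $\bu:=\uone$ solves \eqref{prob_org} follows by reversing the derivation: testing \eqref{VF1} against $\vv\in\VV(\Omega)$ recovers \eqref{sys1a}--\eqref{sys1d} distributionally, the vanishing of the trace variables against $\Hdiv{\Omega}$ resp.\ $\Hdivz{\Omega}$ test functions forces $\utwo\in H^1_0(\Omega)$ and $\uone\in\Hdivz{\Omega}$ via Proposition~\ref{prop_can_jumps}, and eliminating $\utwo,\uthree,\ufour$ yields $(\grad\div)^2\uone+\uone=\ff$ with the prescribed homogeneous boundary conditions.

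\textbf{Main obstacle.} The step I expect to require the most care is the inf-sup / adjoint-injectivity analysis of the unbroken saddle-point system: one must show that the only $(\vone,\vtwo,\vthree,\vfour)\in\VV(\Omega)$ annihilating $b(\uu,\cdot)$ for all $\uu\in\UU_0$ is the zero tuple, which amounts to proving that the formal adjoint PDE system (again a fourth-order div problem, now on the test side, with homogeneous data and the ``complementary'' boundary conditions encoded through $\Hdiv{\Omega}$ vs.\ $\Hdivz{\Omega}$) has only the trivial solution. This is where one genuinely uses the ellipticity of the $(\grad\div)^2+\mathrm{id}$ operator on the right energy space $\Hgdivz{\Omega}$, together with the regularity facts from Proposition~\ref{prop_gd} that let one identify intermediate variables ($\utwo=\div\uone\in H^1$, etc.) in $L_2$. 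Once the unbroken problem is settled, the passage to the broken setting and the recovery of \eqref{prob_org} are essentially bookkeeping built on Section~\ref{sec_traces}.
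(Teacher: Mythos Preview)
Your overall architecture---boundedness, the Carstensen--Demkowicz--Gopalakrishnan passage from the unbroken to the broken formulation via Propositions~\ref{prop_can_norms} and~\ref{prop_can_jumps}, and the recovery of \eqref{prob_org} from the solution---matches the paper, and your injectivity argument for the homogeneous adjoint (reduce to $(\grad\div)^2\vone+\vone=0$ in $\Hgdivz{\Omega}$ and invoke Lax--Milgram uniqueness) is correct. The gap is in the \emph{solvability with bounds} of the adjoint with general $L_2$ data, which is what the inf-sup condition on $\UU_0$ actually requires. Your proposed Brezzi grouping $(\uone,\ufour)$ versus $(\utwo,\uthree)$ does not fit the bilinear form: the cross term $-\vdual{\ufour}{\vtwo}$ couples the first trial block to the second test block, and the would-be diagonal reduces, after integration by parts, to $\|\uone\|^2$ alone---not coercive in $\ufour$---while the off-diagonal coupling $\ufour\mapsto -\vdual{\ufour}{\vtwo}$ tested against $\vtwo\in H^1_0(\Omega)$ yields only an $H^{-1}$ control of $\ufour$, not $L_2$. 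More basically, Lax--Milgram on $\Hgdivz{\Omega}$ settles the \emph{primal} problem but not the adjoint, which lives in $\VV_0(\Omega)=\Hdivz{\Omega}\times H^1_0(\Omega)\times\Hdiv{\Omega}\times H^1(\Omega)$ and, with arbitrary data $\gone,\dots,\gfour\in L_2$, does not reduce to a clean fourth-order problem (eliminating to a single unknown produces right-hand sides such as $\div\gone\notin L_2$).

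What the paper does instead is eliminate $\vone,\vthree$ and work with the reduced variational system for $(\vtwo,\vfour)\in H^1_0(\Omega)\times H^1(\Omega)$ whose bilinear form is
\[
   a(\vtwo,\vfour;\dvtwo,\dvfour)
   = \vdual{\grad\vtwo}{\grad\dvtwo}+\vdual{\vfour}{\dvtwo}
     +\vdual{\vtwo}{\dvfour}-\vdual{\grad\vfour}{\grad\dvfour},
\]
an indefinite saddle point with a \emph{negative} semi-definite block on the $\vfour$-side; this is the ``non-zero diagonal blocks'' issue flagged in the introduction, and standard Brezzi theory does not apply. The inf-sup is obtained by a Peetre--Tartar compactness argument: fix $w\in H^1_0(\Omega)$ with $-\Delta w=1$, show that $\|(\vtwo,\vfour)\|_a^2:=\|\grad\vtwo\|^2+\|\grad\vfour\|^2+|a(\vtwo,\vfour;w,0)|^2$ is a norm equivalent to the full $H^1_0\times H^1$ norm (using compactness of $H^1(\Omega)\hookrightarrow L_2(\Omega)$), and then test with $(\dvtwo,\dvfour)=(\vtwo+a(\vtwo,\vfour;w,0)\,w,\,-\vfour)$ to realize $\|(\vtwo,\vfour)\|_a^2$ on the left. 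This is precisely the step your outline leaves open.
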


For a proof of this theorem we refer to \S\ref{sec_pf1}.

In order to define our approximation scheme, we consider a (family of) discrete subspace(s)
$\UU_h\subset\UU$ (being set on a sequence of meshes $\cT$)
and introduce the \emph{trial-to-test operator} $\ttt:\;\UU\to \VV(\cT)$ by
\begin{align*}
   \ip{\ttt(\uu)}{\vv}_{\VV(\cT)} = b(\uu,\vv)\quad\forall\vv\in \VV(\cT).
\end{align*}
Here, $\ip{\cdot}{\cdot}_{\VV(\cT)}$ denotes the inner product of $\VV(\cT)$ that induces the
norm defined previously.

Then, our first DPG scheme for problem~\eqref{prob_org} is:
\emph{Find $\uu_h\in \UU_h$ such that}
\begin{align} \label{DPG1}
   b(\uu_h,\ttt\deltauu) = L(\ttt\deltauu) \quad\forall\deltauu\in \UU_h.
\end{align}

The following theorem states its quasi-optimal approximation property.

\begin{theorem} \label{thm_DPG1}
Let $\ff\in \bL_2(\Omega)$ be given.
For any finite-dimensional subspace $\UU_h\subset \UU$
there exists a unique solution $\uu_h\in \UU_h$ to \eqref{DPG1}. It satisfies the quasi-optimal
error estimate
\[
   \|\uu-\uu_h\|_{\UU} \lesssim \|\uu-\ww\|_{\UU}
   \quad\forall\ww\in \UU_h
\]
with a hidden constant that is independent of $\cT$.
\end{theorem}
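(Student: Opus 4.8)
The plan is to derive Theorem~\ref{thm_DPG1} as an instance of the abstract DPG machinery once well-posedness (Theorem~\ref{thm_stab1}) is in hand. The key observation is that the DPG method \eqref{DPG1} is nothing but the Petrov--Galerkin scheme whose trial space is $\UU_h$ and whose test space is the \emph{optimal} one $\ttt(\UU_h)\subset\VV(\cT)$. Equivalently, as shown in \cite{DemkowiczG_11_CDP}, $\uu_h$ is the minimizer over $\UU_h$ of the residual functional $\ww\mapsto\|B\ww-L\|_{\VV(\cT)'}$; since the dual norm is induced by the Riesz map $R_{\VV(\cT)}$ and $\ttt=R_{\VV(\cT)}^{-1}B$, the normal equations of this least-squares problem are exactly \eqref{DPG1}. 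So the whole statement reduces to three standard ingredients.

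First I would record that the bilinear form $b(\cdot,\cdot)$ is bounded on $\UU\times\VV(\cT)$: each volume term is an $L_2(\cT)$-pairing controlled by $\|\cdot\|_\UU\|\cdot\|_{\VV(\cT)}$, and each skeleton term is bounded by definition of the dual trace norms $\|\cdot\|_{-1/2,\cS}$, $\|\cdot\|_{1/2,\cS}$ together with Proposition~\ref{prop_can_norms} (which identifies those norms with the operator norms against $\Hdiv{\cT}$, resp.\ $H^1(\cT)$, test functions). Hence $\|B\|_{\UU\to\VV(\cT)'}=:M<\infty$, uniformly in $\cT$. Second, Theorem~\ref{thm_stab1} gives a uniform bound $\|\uu\|_\UU\lesssim\|\ff\|$ for the exact solution; combined with the general fact that $B$ is injective (same theorem, uniqueness) and that $b$ satisfies an inf--sup condition — which is implicit in the well-posedness proof of \S\ref{sec_pf1} — we get a uniform lower bound $\gamma>0$ with $\|B\ww\|_{\VV(\cT)'}\ge\gamma\|\ww\|_\UU$ for all $\ww\in\UU$. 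These two constants $M,\gamma$ are the only inputs needed.

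With $M$ and $\gamma$ fixed, the conclusion follows from the abstract DPG result, e.g.\ \cite[Theorem~2.1]{CarstensenDG_14_PEC} or the original argument in \cite{DemkowiczG_11_CDP}: for any finite-dimensional $\UU_h\subset\UU$ the discrete problem \eqref{DPG1} has a unique solution (the Gram matrix of $\ttt$ restricted to $\UU_h$ is symmetric positive definite because $B|_{\UU_h}$ is injective, $B$ being globally injective), and one has the quasi-optimality
\[
   \|\uu-\uu_h\|_\UU \le \frac{M}{\gamma}\,\inf_{\ww\in\UU_h}\|\uu-\ww\|_\UU .
\]
The proof of this estimate is the usual C\'ea-type argument: optimality of the test space makes the scheme equivalent to a conforming minimization of the energy-norm residual, so $\uu_h$ is the $B^{\!*}R_{\VV(\cT)}^{-1}B$-orthogonal projection of $\uu$ onto $\UU_h$, and boundedness/stability convert that orthogonal projection into the stated quasi-optimal bound with constant $M/\gamma$. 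Crucially, both $M$ and $\gamma$ are independent of $\cT$, so the hidden constant is mesh-independent as claimed.

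The main obstacle is not in this theorem at all but in its prerequisite: establishing the uniform inf--sup / stability constant $\gamma$ for $b$ on $\UU\times\VV(\cT)$, which is deferred to \S\ref{sec_pf1}. That is where one must show that the combination of the four $L_2$-pairings and the four trace pairings controls every component of $\uu$ uniformly in the mesh — typically by testing with carefully chosen $\vv\in\VV(\cT)$ built from a solution of the adjoint problem together with the characterizations in Proposition~\ref{prop_can_jumps}. Once that is granted (as we may, since Theorem~\ref{thm_stab1} is assumed), the present theorem is a direct corollary of the abstract theory and requires no further mesh-dependent analysis.
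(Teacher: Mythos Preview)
Your proposal is correct and matches the paper's own argument essentially verbatim: the paper also notes that the DPG scheme minimizes the residual $\|B(\uu-\uu_h)\|_{\VV(\cT)'}$ and deduces quasi-optimality from the equivalence of $\|\cdot\|_\UU$ and $\|B\cdot\|_{\VV(\cT)'}$, the latter coming from the boundedness of $b$ and the inf--sup property \eqref{infsup1}. Your write-up is somewhat more explicit about the mechanism (Gram matrix, C\'ea-type estimate with constant $M/\gamma$), but the substance is the same.
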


A proof of this theorem is given in \S\ref{sec_pf1}.

%===================================================================================================
\subsection{Fully discrete method} \label{sec_discrete1}

In practice, the action of the trial-to-test operator $\ttt$ has to be approximated.
To this end we replace the test space by a sufficiently rich finite-dimensional subspace
$\VV_h(\cT)\subset\VV(\cT)$,
and use the approximated operator $\ttt_h:\;\UU_h\to\VV_h(\cT)$ defined by
\begin{align*}
   \ip{\ttt_h(\uu)}{\vv}_{\VV(\cT)} = b(\uu,\vv)\quad\forall\vv\in \VV_h(\cT).
\end{align*}
The fully discrete DPG method then is
\begin{align} \label{DPG1_discrete}
   \uu_h\in\UU_h:\quad b(\uu_h,\ttt_h\deltauu) = L(\ttt_h\deltauu) \quad\forall\deltauu\in \UU_h.
\end{align}
Discrete stability and quasi-optimal convergence of this scheme is guaranteed by the
existence of a (sequence of) Fortin operator(s) $\Pi_F:\;\VV(\cT)\to\VV_h(\cT)$, i.e.,
$\Pi_F$ is uniformly bounded with respect to the underlying (sequence of) space(s)
$\VV_h(\cT)$ and satisfies
\[
   b(\uu_h,\vv-\Pi_F\vv) = 0\quad\forall \uu_h\in\UU_h,\ \vv\in\VV(\cT),
\]
see \cite{GopalakrishnanQ_14_APD} for details.

In order to be specific, let us fix the discretization.
We consider a polygonal domain $\Omega\subset\R^d$ ($d\ge 2$) and meshes $\cT$ of shape-regular
simplicial elements:
\[
   \sup_{T\in\cT} \frac {\diam(T)^d}{|T|}\quad\text{is uniformly bounded.}
\]
In the following, $\cF_T$ denotes the set of faces of an element $T\in\cT$.
For a non-negative integer $p$ let us define the discrete spaces
\begin{align*}
   \cP^p(T) &:= \{v:\; T\to\R;\; v\ \text{is a polynomial of degree}\ p\}\quad (T\in\cT),\\
   \cP^p(\cT) &:= \{v\in L_2(\Omega);\; v|_T\in\cP^p(T)\ \forall T\in\cT\},\\
   \cP^p(F) &:= \{v:\;F\to\R;\; v\ \text{is a polynomial of degree}\ p\}\quad (F\in\cF_T, T\in\cT),\\
   \cP^p(\cF_T) &:= \{v\in L_2(\partial T);\; v|_F\in\cP^p(F)\ \forall F\in\cF_T\}\quad (T\in\cT),
   \\[1em]
   U^{\mathrm{div},p}(\cT)
      &:= \{\bv\in\Hdiv{\Omega};\; (\bv\cdot\nn)|_{\partial T}\in \cP^p(\cF_T)\ \forall T\in\cT\},\\
   \cP^p(\cS) &:= \traced{}( U^{\mathrm{div},p}(\cT)),\quad
   \cP^p_0(\cS) := \traced{}( U^{\mathrm{div},p}(\cT)\cap\Hdivz{\Omega}),
   \\[1em]
   U^{\mathrm{grad},p}(\cT)
      &:= \{v\in H^1(\Omega);\; v|_{\partial T}\in\cP^p(\cF_T)\ \forall T\in\cT\},\\
   \cP^{p,c}(\cS) &:= \traceg{}(U^{\mathrm{grad},p}(\cT)),\quad
   \cP^{p,c}_0(\cS) := \traceg{}(U^{\mathrm{grad},p}(\cT)\cap H^1_0(\Omega)).
\end{align*}
Then we consider the approximation space
\begin{align} \label{Uh1}
   \UU_h:=\UU_h^p:= \cP^p(\cT)^d\times\cP^p(\cT)\times\cP^p(\cT)^d\times\cP^p(\cT)
   \times \cP^p_0(\cS)\times \cP^{p+1,c}_0(\cS) \times\cP^p(\cS)\times \cP^{p+1,c}(\cS)
\end{align}
and discrete test space
\begin{align} \label{Vh1}
   \VV_h(\cT):=\VV_h^p(\cT):=
   \cP^{p+2}(\cT)^d\times \cP^{p+d+1}(\cT)\times \cP^{p+2}(\cT)^d\times \cP^{p+d+1}(\cT).
\end{align}
We employ the operators $\Pi^\mathrm{div}_{p+2}$ and $\Pi^\mathrm{grad}_{p+d+1}$
from \cite[Section~3.4]{GopalakrishnanQ_14_APD}, and using the results reported there it
is clear that
\[
   \Pi_F(\vv):=(\Pi^\mathrm{div}_{p+2}\vone,\Pi^\mathrm{grad}_{p+d+1}\vtwo,
                \Pi^\mathrm{div}_{p+2}\vthree,\Pi^\mathrm{grad}_{p+d+1}\vfour)
\]
for $\vv=(\vone,\vtwo,\vthree,\vfour)\in\VV(\cT)$ is a Fortin operator as required for our case.
For details we refer to \cite[Lemmas~3.2,3.3]{GopalakrishnanQ_14_APD}. Note the
increased polynomial degree by $d+1$ instead of $d$ as in \cite{GopalakrishnanQ_14_APD}
for the approximation of $H^1(\cT)$. This is necessary due to the presence of the terms
$\vdual{\utwo}{\vfour}$ and $\vdual{\ufour}{\vtwo}$ in the bilinear form,
contrary to the Poisson case in \cite{GopalakrishnanQ_14_APD} without reaction term.

%\comment{Me parece que necesitamos el grado $p+1+d$ para tener ortogonalidad en $L_2(T)$
%a los polinomios de grado $p$ por la presencia de los t\'erminos $\vtwo$ and $\vfour$ en
%la forma bilineal.}

Combining the established existence of a Fortin operator with Theorem~\ref{thm_DPG1}, we conclude
the quasi-optimal convergence of the fully-discrete DPG scheme,
cf.~\cite[Theorem~2.1]{GopalakrishnanQ_14_APD}.

\begin{cor} \label{cor_DPG1_discrete}
Let $\ff\in \bL_2(\Omega)$ and $p\in\N_0$ be given. Selecting the approximation and test spaces
$\UU_h$, $\VV_h(\cT)$ as in \eqref{Uh1}, \eqref{Vh1}, respectively, system \eqref{DPG1_discrete}
has a unique solution $\uu_h\in \UU_h$. It satisfies the quasi-optimal
error estimate
\[
   \|\uu-\uu_h\|_{\UU} \lesssim \|\uu-\ww\|_{\UU}
   \quad\forall\ww\in \UU_h
\]
with a hidden constant that is independent of $\cT$.
\end{cor}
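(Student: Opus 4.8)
The plan is to invoke the abstract DPG convergence theory of Gopalakrishnan--Qiu \cite[Theorem~2.1]{GopalakrishnanQ_14_APD}, which reduces the claim to three ingredients: (i) well-posedness of the continuous formulation \eqref{VF1} together with quasi-optimality of the idealized DPG scheme \eqref{DPG1}, (ii) a compatible choice of the discrete trial space $\UU_h$ and enriched discrete test space $\VV_h(\cT)$, and (iii) the existence of a uniformly bounded Fortin operator $\Pi_F\colon\VV(\cT)\to\VV_h(\cT)$ satisfying the orthogonality $b(\uu_h,\vv-\Pi_F\vv)=0$ for all $\uu_h\in\UU_h$, $\vv\in\VV(\cT)$. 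Ingredient (i) is exactly the content of Theorems~\ref{thm_stab1} and \ref{thm_DPG1}, which we may assume. So the work consists entirely in verifying (iii) for the concrete spaces \eqref{Uh1}, \eqref{Vh1}.

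First I would record what the Fortin property demands componentwise. Inspecting the bilinear form $b(\uu,\vv)$, the test function $\vone\in\Hdiv{T}$ is paired against $\uone|_T\in\cP^p(T)^d$ (volume term), $\ufour|_T\in\cP^p(T)$ through $\div\vone$, and the trace $\tufour\in\cP^{p+1,c}(\cS)$ through $\dualg{\tufour}{\vone}_\cS$; symmetrically $\vthree$ is tested against $\uthree$, $\utwo$, and $\tutwo$. Meanwhile $\vtwo\in H^1(T)$ is paired with $\ufour|_T$, $\uthree|_T$ via $\grad\vtwo$, and $\tuthree\in\cP^p(\cS)$ via $\duald{\tuthree}{\vtwo}_\cS$; symmetrically for $\vfour$. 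Hence the Fortin operator must act separately on each of the two $\Hdiv{}$-components and each of the two $H^1$-components, and for each it must preserve: the $L_2$-projection against polynomials of degree $p$ on each element (to kill the volume terms), the divergence (resp.\ gradient) tested against degree-$p$ polynomials, and the normal trace (resp.\ trace) tested against the face polynomials of degree $p+1$ that appear in $\cP^{p+1,c}_0(\cS)$ and $\cP^{p+1,c}(\cS)$. These are precisely the properties of the operators $\Pi^{\mathrm{div}}_{p+2}$ and $\Pi^{\mathrm{grad}}_{p+d+1}$ constructed in \cite[Section~3.4, Lemmas~3.2, 3.3]{GopalakrishnanQ_14_APD}, once one checks the polynomial degrees are high enough. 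I would therefore set $\Pi_F(\vv):=(\Pi^{\mathrm{div}}_{p+2}\vone,\Pi^{\mathrm{grad}}_{p+d+1}\vtwo,\Pi^{\mathrm{div}}_{p+2}\vthree,\Pi^{\mathrm{grad}}_{p+d+1}\vfour)$, observe that this maps $\VV(\cT)$ into $\VV_h^p(\cT)$ with uniform bound, and verify that the $L_2$-orthogonality built into these operators annihilates all volume and skeleton contributions of $b(\uu_h,\vv-\Pi_F\vv)$ for $\uu_h\in\UU_h$.

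The one point requiring genuine care — and the main obstacle — is the degree bookkeeping for the $H^1$-components. In the Poisson case of \cite{GopalakrishnanQ_14_APD} the Fortin operator on $H^1(\cT)$ need only preserve the degree-$p$ moment of $\grad v$ and the degree-$p$ face trace, which the construction achieves with degree $p+d$. Here, however, the bilinear form contains the zeroth-order cross terms $\vdual{\utwo}{\vfour}_\cT$ and $\vdual{\ufour}{\vtwo}_\cT$, so the Fortin operator on $H^1(\cT)$ must \emph{additionally} reproduce the degree-$p$ volume moment of $v$ itself (not just of $\grad v$). This forces one extra degree in the element-bubble correction, hence degree $p+d+1$ rather than $p+d$, and correspondingly the enriched test space uses $\cP^{p+d+1}(\cT)$ for the scalar components; the $\Hdiv{}$-components already carry the needed $L_2$-moment so $\cP^{p+2}(\cT)^d$ suffices there. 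Once this is spelled out, uniform boundedness and the exact Fortin identity follow verbatim from \cite[Lemmas~3.2, 3.3]{GopalakrishnanQ_14_APD}, and then \cite[Theorem~2.1]{GopalakrishnanQ_14_APD} combined with Theorem~\ref{thm_DPG1} yields unique solvability of \eqref{DPG1_discrete} and the stated quasi-optimal estimate with a constant independent of $\cT$. Since all of this is explained in the surrounding text, the corollary follows with no further argument.
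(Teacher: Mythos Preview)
Your proposal is correct and follows essentially the same route as the paper. The paper likewise defines $\Pi_F(\vv):=(\Pi^{\mathrm{div}}_{p+2}\vone,\Pi^{\mathrm{grad}}_{p+d+1}\vtwo,\Pi^{\mathrm{div}}_{p+2}\vthree,\Pi^{\mathrm{grad}}_{p+d+1}\vfour)$ using the operators from \cite[Lemmas~3.2,~3.3]{GopalakrishnanQ_14_APD}, makes the identical observation that the degree must be raised from $p+d$ to $p+d+1$ for the $H^1$-components because of the zeroth-order cross terms $\vdual{\utwo}{\vfour}$ and $\vdual{\ufour}{\vtwo}$, and then appeals to \cite[Theorem~2.1]{GopalakrishnanQ_14_APD} together with Theorem~\ref{thm_DPG1}.
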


%===================================================================================================
\subsection{Proofs for the first-order formulation} \label{sec_pf1}

In this section we prove Theorems~\ref{thm_stab1},~\ref{thm_DPG1}. This requires
some preliminary steps given next.

%===================================================================================================
\subsubsection{Stability of the adjoint problem} \label{sec_adj1}

The adjoint problem to \eqref{prob_sys1} with continuous test functions
\[
   (\vone,\vtwo,\vthree,\vfour)\in \VV_0(\Omega)
   := \Hdivz{\Omega}\times H^1_0(\Omega)\times \Hdiv{\Omega}\times H^1(\Omega)
\]
reads as follows:
Given $\gone,\gthree\in \bL_2(\Omega)$ and $\gtwo,\gfour\in L_2(\Omega)$, 
\emph{find $(\vone,\vtwo,\vthree,\vfour)\in \VV_0(\Omega)$ such that}
\begin{align} \label{adj1}
   \vone-\grad\vfour = \gone,\quad
   \vfour+\div\vthree = \gtwo,\quad
   \vthree+\grad\vtwo = \gthree,\quad
   \vtwo+\div\vone  = \gfour.
\end{align}
It is a fully non-homogeneous version of \eqref{prob_sys1} with some sign changes.
We show that this problem is well posed.

\begin{lemma} \label{la_adj1_well}
There is a unique solution $\vv=(\vone,\vtwo,\vthree,\vfour)\in \VV_0(\Omega)$ to \eqref{adj1},
and the bound
\[
   \|\vv\|_{\VV(\cT)}^2 \lesssim
   \|\gone\|^2 + \|\gtwo\|^2 + \|\gthree\|^2 + \|\gfour\|^2
\]
holds with a constant that is independent of the given data.
\end{lemma}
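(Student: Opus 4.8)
The plan is to eliminate the two vector unknowns $\vone,\vthree$, to reduce \eqref{adj1} to an equivalent second-order problem for the scalar pair $(\vtwo,\vfour)$, and to establish well-posedness of the latter by an inf-sup argument in both arguments (equivalently, a Brezzi-type analysis of the underlying saddle point problem with coercive diagonal block).

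\textbf{Reduction.} The first and third equations of \eqref{adj1} give $\vone=\gone+\grad\vfour$ and $\vthree=\gthree-\grad\vtwo$; note that $\vone\in\Hdivz{\Omega}$ encodes $\div\vone\in L_2(\Omega)$ together with $\vone\cdot\nn=0$ on $\Gamma$, whereas $\vthree\in\Hdiv{\Omega}$ only requires $\div\vthree\in L_2(\Omega)$ and carries no boundary condition. Testing the fourth equation of \eqref{adj1} with $\chi\in H^1(\Omega)$, integrating $\vdual{\div\vone}{\chi}$ by parts (the boundary term vanishing since $\vone\cdot\nn=0$) and inserting $\vone=\gone+\grad\vfour$, and likewise testing the second equation with $\phi\in H^1_0(\Omega)$ (the boundary term now vanishing because $\phi|_\Gamma=0$) and inserting $\vthree=\gthree-\grad\vtwo$, one is led to the problem: find $(\vtwo,\vfour)\in H^1_0(\Omega)\times H^1(\Omega)$ such that
\[
   -\vdual{\grad\vfour}{\grad\chi}+\vdual{\vtwo}{\chi}+\vdual{\grad\vtwo}{\grad\phi}+\vdual{\vfour}{\phi}
   =\vdual{\gfour}{\chi}+\vdual{\gone}{\grad\chi}+\vdual{\gtwo}{\phi}+\vdual{\gthree}{\grad\phi}
\]
for all $(\chi,\phi)\in H^1(\Omega)\times H^1_0(\Omega)$; this is well defined using only $\gone,\gthree\in\bL_2(\Omega)$ and $\gtwo,\gfour\in L_2(\Omega)$. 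Conversely, from a solution of this problem one recovers $\vv=(\vone,\vtwo,\vthree,\vfour)$ by setting $\vone:=\gone+\grad\vfour$, $\vthree:=\gthree-\grad\vtwo$: testing with $\chi,\phi\in C_0^\infty$ shows $\div\vone=\gfour-\vtwo$ and $\div\vthree=\gtwo-\vfour$ lie in $L_2(\Omega)$, testing with arbitrary $\chi\in H^1(\Omega)$ gives $\vdual{\vone}{\grad\chi}+\vdual{\div\vone}{\chi}=0$, hence $\vone\cdot\nn=0$ on $\Gamma$, and one reads off that $\vv\in\VV_0(\Omega)$ solves \eqref{adj1}. Thus the two formulations are equivalent, and these identities yield $\|\vv\|_{\VV(\cT)}\lesssim\|(\vtwo,\vfour)\|_{H^1_0(\Omega)\times H^1(\Omega)}+\|\gone\|+\|\gtwo\|+\|\gthree\|+\|\gfour\|$.

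\textbf{Well-posedness of the reduced problem.} Boundedness of the bilinear form and of the load functional is immediate. For the inf-sup condition in the first argument, given $(\vtwo,\vfour)$ we test with $(\chi,\phi)=(-\vfour,\vtwo)$: the cross terms $\vdual{\vtwo}{\vfour}$ and $\vdual{\vfour}{\vtwo}$ appear with opposite signs and cancel, leaving exactly $\|\grad\vfour\|^2+\|\grad\vtwo\|^2$ on the left-hand side. Since $\vtwo\in H^1_0(\Omega)$, Poincar\'e's inequality controls $\|\vtwo\|$. The $L_2$-norm of $\vfour$ is not yet controlled because no boundary condition is imposed on $\vfour$; to capture its mean value we fix a function $\phi_0\in H^1_0(\Omega)$ with $\vdual{1}{\phi_0}=1$ and test with $(\chi,\phi)=(0,\phi_0)$, which expresses $\vdual{\vfour}{1}$ through $\vdual{\vfour}{\phi_0}$, $\vdual{\grad\vtwo}{\grad\phi_0}$ and the load; combined with the Poincar\'e--Wirtinger estimate $\|\vfour-\tfrac{1}{|\Omega|}\vdual{\vfour}{1}\|\lesssim\|\grad\vfour\|$ this bounds $\|\vfour\|$ by $\|\grad\vtwo\|$, $\|\grad\vfour\|$ and the data. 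A fixed linear combination of the two test functions together with Young's inequality then gives the inf-sup estimate; the inf-sup condition in the second argument follows by the symmetric choices $(\vtwo,\vfour)=(\phi,-\chi)$ (cross terms cancel, leaving $\|\grad\chi\|^2+\|\grad\phi\|^2$) and $(\vtwo,\vfour)=(\vtwo_0,0)$ with $\vtwo_0\in H^1_0(\Omega)$ of unit average. Standard inf-sup theory (Banach--Ne\v{c}as--Babu\v{s}ka) now provides a unique $(\vtwo,\vfour)$ with $\|(\vtwo,\vfour)\|_{H^1_0(\Omega)\times H^1(\Omega)}\lesssim\|\gone\|+\|\gtwo\|+\|\gthree\|+\|\gfour\|$, which together with the first part proves Lemma~\ref{la_adj1_well}. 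The one nonroutine point is exactly the recovery of $\|\vfour\|$: the equation that would control it does not see $\vfour$ on the boundary, so its mean must be extracted from the other equation by testing against a fixed function of unit average and then absorbed.
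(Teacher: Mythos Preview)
Your reduction to the $(\vtwo,\vfour)$-problem and the recovery of $\vv\in\VV_0(\Omega)$ match the paper's proof exactly, and you correctly identify the only nontrivial issue: controlling $\|\vfour\|$ in the absence of a boundary condition on $\vfour$. Your argument is essentially correct, but the route you take to recover $\|\vfour\|$ differs from the paper's and your write-up blurs two distinct things.

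The paper introduces $w\in H^1_0(\Omega)$ with $-\Delta w=1$, defines the auxiliary seminorm $\|(\vtwo,\vfour)\|_a^2:=\|\grad\vtwo\|^2+\|\grad\vfour\|^2+|a(\vtwo,\vfour;w,0)|^2$, and appeals to the Peetre--Tartar lemma (via compactness of $H^1(\Omega)\hookrightarrow L_2(\Omega)$) to show this is equivalent to the full $H^1_0\times H^1$-norm. The inf-sup then follows from the single test $(\dvtwo,\dvfour)=(\vtwo+a(\vtwo,\vfour;w,0)\,w,\,-\vfour)$. Your approach replaces the compactness argument by the explicit observation that $\vdual{\vfour}{\phi_0}=\bar\vfour+\vdual{\vfour-\bar\vfour}{\phi_0}$, the second term being controlled by $\|\grad\vfour\|$. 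This is more elementary---no Peetre--Tartar---and works.

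Two presentational points. First, the sentence ``this bounds $\|\vfour\|$ by $\|\grad\vtwo\|$, $\|\grad\vfour\|$ and the data'' describes an \emph{a priori} estimate for a solution, not an inf-sup condition; the inf-sup inequality cannot involve the load. What you actually need (and what your next sentence asserts) is the test $(\chi,\phi)=(-\vfour,\,\vtwo+\alpha\phi_0)$ with, e.g., $\alpha=\varepsilon\bar\vfour$ for a small fixed $\varepsilon>0$: then $a(\vtwo,\vfour;\chi,\phi)=\|\grad\vtwo\|^2+\|\grad\vfour\|^2+\varepsilon\bar\vfour^2+\varepsilon\bar\vfour\bigl(\vdual{\grad\vtwo}{\grad\phi_0}+\vdual{\vfour-\bar\vfour}{\phi_0}\bigr)$, and Young's inequality absorbs the last bracket. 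Second, ``a fixed linear combination'' is misleading---the coefficient $\alpha$ must depend on $(\vtwo,\vfour)$ (a literally fixed scalar fails: take $\vtwo=0$, $\vfour$ constant). With these clarifications your proof is complete and, arguably, more direct than the paper's.
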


\begin{proof}
Eliminating $\vone$ and $\vthree$, \eqref{adj1} becomes
\[
   \vfour+\div(\gthree-\grad\vtwo)=\gtwo,\quad \vtwo+\div(\gone+\grad\vfour)=\gfour,
\]
and in variational form: {\em Find $(\vtwo,\vfour)\in H^1_0(\Omega)\times H^1(\Omega)$ such that}
\begin{align} \label{adj1_mixed}
   a(\vtwo,\vfour;\dvtwo,\dvfour) = \vdual{\gtwo}{\dvtwo}+\vdual{\gthree}{\grad\dvtwo}
                                  + \vdual{\gfour}{\dvfour} + \vdual{\gone}{\grad\dvfour}
\end{align}
{\em for any} $(\dvtwo,\dvfour)\in H^1_0(\Omega)\times H^1(\Omega)$ where
\begin{align*}
   a(\vtwo,\vfour;\dvtwo,\dvfour) :=
   \vdual{\grad\vtwo}{\grad\dvtwo} + \vdual{\vfour}{\dvtwo} +
   \vdual{\vtwo}{\dvfour} - \vdual{\grad\vfour}{\grad\dvfour}.
\end{align*}
Note that the discarded duality between the traces on $\Gamma$ of $(\gone+\grad\vfour)\cdot\nn$
and $\dvfour$ implies the required homogeneous boundary condition for $\vone:=\gone+\grad\vfour$.
Problem \eqref{adj1_mixed} is a mixed system of saddle point form.
It is somewhat non-standard due to the presence of the negative semi-definite form
$-\vdual{\grad\vfour}{\grad\dvfour}$. We show its well-posedness.
Then, $\vv:=(\vone,\vtwo,\vthree,\vfour)$ with
$\vone:=\gone+\grad\vfour$ and $\vthree:=\gthree-\grad\vtwo$ satisfies $\vv\in\VV_0(\Omega)$,
solves \eqref{adj1}, and is bounded as claimed.

Noting the boundedness of the appearing linear and bilinear forms, to show well-posedness
of \eqref{adj1_mixed} it is enough to verify the inf-sup condition
\begin{align} \label{adj1_infsup}
   \sup_{0\not=(\dvtwo,\dvfour)\in H^1_0(\Omega)\times H^1(\Omega)}
   \frac {a(\vtwo,\vfour;\dvtwo,\dvfour)}{\|\grad\dvtwo\| + \|\dvfour\|_1}
   \gtrsim \|\grad\vtwo\| + \|\vfour\|_1
   \quad\forall (\vtwo,\vfour)\in H^1_0(\Omega)\times H^1(\Omega)
\end{align}
and injectivity
\begin{align*} %\label{adj1_inj}
   a(\dvtwo,\dvfour;\vtwo,\vfour)=0\quad\forall (\dvtwo,\dvfour)\in H^1_0(\Omega)\times H^1(\Omega)
   \quad\Rightarrow\quad \vtwo=\vfour=0
\end{align*}
for $(\vtwo,\vfour)\in H^1_0(\Omega)\times H^1(\Omega)$.
The injectivity is immediate since
$a(\vtwo,-\vfour;\vtwo,\vfour)=\|\grad\vtwo\|^2+\|\grad\vfour\|^2=0$ implies that
$\vtwo=0$, $\vfour=c\in\R$, and
$a(\dvtwo,0;0,c)=\vdual{\dvtwo}{c}=0$ $\forall\dvtwo\in H^1_0(\Omega)$ means that $c=0$.

We are left with proving \eqref{adj1_infsup}. To this end we define
\[
   \|(\vtwo,\vfour)\|_a^2 := \|\grad\vtwo\|^2 + \|\grad\vfour\|^2 + |a(\vtwo,\vfour;w,0)|^2,
   \quad (\vtwo,\vfour)\in H^1_0(\Omega)\times H^1(\Omega),
\]
where $w\in H^1_0(\Omega)$ is a fixed function solving $-\Delta w=1$ in $\Omega$.
One verifies that $\|\cdot\|_a$ is a norm in $H^1_0(\Omega)\times H^1(\Omega)$
and that the equivalence
\[
   \|(\vtwo,\vfour)\|_a + \|\vfour\|
   \simeq
   \|\grad\vtwo\| + \|\grad\vfour\| + \|\vfour\|
   \quad\forall (\vtwo,\vfour)\in H^1_0(\Omega)\times H^1(\Omega)
\]
holds true.
By Rellich's compactness of the embedding $H^1(\Omega)\to L_2(\Omega)$ and
the Peetre--Tartar lemma, see, e.g., \cite[Theorem I.2.1]{GiraultR_86_FEM},
we also have the equivalence
\[
   \|(\vtwo,\vfour)\|_a
   \simeq
   \|\grad\vtwo\| + \|\grad\vfour\| + \|\vfour\|
   \quad\forall (\vtwo,\vfour)\in H^1_0(\Omega)\times H^1(\Omega).
\]
The inf-sup property follows by selecting $\dvtwo:=\vtwo+a(\vtwo,\vfour;w,0)w$,
$\dvfour:=-\vfour$, calculating
\[
   a(\vtwo,\vfour;\dvtwo,\dvfour)
   = \|\grad\vtwo\|^2 + \|\grad\vfour\|^2 + |a(\vtwo,\vfour;w,0)|^2
   = \|(\vtwo,\vfour)\|_a^2
\]
and bounding
\[
   \|\grad\dvtwo\|^2 + \|\grad\dvfour\|^2 + \|\dvfour\|^2
   \lesssim
   \|\grad\vtwo\|^2 + \|\vfour\|_1^2 + |a(\vtwo,\vfour;w,0)|^2 \|\grad w\|^2
   \lesssim
   \|\grad\vtwo\|^2 + \|\vfour\|_1^2.
\]
The hidden constants only depend on $\Omega$.
\end{proof}

\begin{lemma} \label{la_inj1}
The adjoint operator $B^*:\;\VV(\cT)\to \UU'$ is injective.
\end{lemma}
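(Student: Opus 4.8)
The plan is to unravel the definition of the adjoint: since $\langle B^*\vv,\uu\rangle=b(\uu,\vv)$, injectivity of $B^*$ is the statement that any $\vv=(\vone,\vtwo,\vthree,\vfour)\in\VV(\cT)$ with $b(\uu,\vv)=0$ for all $\uu\in\UU$ must vanish. I would extract this in two stages: first use the field components of $\uu$ to obtain the (piecewise) strong form of the adjoint equations, then use the trace components to show that $\vv$ is in fact conforming, i.e.\ $\vv\in\VV_0(\Omega)$ and $\vv$ solves the homogeneous version of \eqref{adj1}. The conclusion $\vv=0$ then follows from the uniqueness already proved in Lemma~\ref{la_adj1_well}.

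Concretely, first I would take $\uu$ with vanishing trace part and let its field components vary one at a time. Letting $\uone$ range over $\bL_2(\Omega)$ forces $\vone=\pwgrad\vfour$ piecewise on $\cT$; ranging $\utwo$, $\uthree$, $\ufour$ likewise yields $\vfour+\pwdiv\vthree=0$, $\vthree+\pwgrad\vtwo=0$ and $\vtwo+\pwdiv\vone=0$, all piecewise on $\cT$. Substituting these four identities back into $b(\uu,\vv)$ annihilates its entire volume part, so that
\[
   \duald{\tuone}{\vfour}_\cS+\dualg{\tutwo}{\vthree}_\cS
   +\duald{\tuthree}{\vtwo}_\cS+\dualg{\tufour}{\vone}_\cS=0
   \qquad\text{for all }\uu\in\UU.
\]
Now I would isolate each trace term separately and invoke Proposition~\ref{prop_can_jumps}: letting $\tufour$ run over $H^{1/2}(\cS)=\traceg{}(H^1(\Omega))$ gives $\vone\in\Hdivz{\Omega}$; letting $\tuthree$ run over $H^{-1/2}(\cS)=\traced{}(\Hdiv{\Omega})$ gives $\vtwo\in H^1_0(\Omega)$; letting $\tutwo$ run over $H^{1/2}_0(\cS)=\traceg{}(H^1_0(\Omega))$ gives $\vthree\in\Hdiv{\Omega}$; and letting $\tuone$ run over $H^{-1/2}_0(\cS)=\traced{}(\Hdivz{\Omega})$ gives $\vfour\in H^1(\Omega)$. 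Hence $\vv\in\VV_0(\Omega)$.

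Once $\vtwo,\vfour\in H^1(\Omega)$ and $\vone,\vthree\in\Hdiv{\Omega}$, the piecewise operators $\pwgrad$, $\pwdiv$ coincide with the global $\grad$, $\div$, so the four identities above hold strongly on $\Omega$; together with the boundary conditions built into $\Hdivz{\Omega}$ and $H^1_0(\Omega)$ this says exactly that $\vv$ solves \eqref{adj1} with $\gone=\gtwo=\gthree=\gfour=0$. By the uniqueness part of Lemma~\ref{la_adj1_well}, $\vv=0$, which proves injectivity of $B^*$. I do not expect a genuine obstacle here: the argument is essentially bookkeeping, the only care needed being to match each test trace space with the correct equivalence in Proposition~\ref{prop_can_jumps} and to keep track of signs; the sole substantive ingredient is the uniqueness assertion of Lemma~\ref{la_adj1_well}.
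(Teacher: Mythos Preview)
Your proposal is correct and follows essentially the same approach as the paper's own proof: use the trace components together with Proposition~\ref{prop_can_jumps} to show $\vv\in\VV_0(\Omega)$, then use the field components to see that $\vv$ solves the homogeneous adjoint problem~\eqref{adj1}, and conclude $\vv=0$ by the uniqueness in Lemma~\ref{la_adj1_well}. The only difference is cosmetic---the paper invokes the trace test functions first and then the field variables, whereas you proceed in the opposite order---and your matching of each trace space with the appropriate clause of Proposition~\ref{prop_can_jumps} is exactly right.
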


\begin{proof}
The proof is standard. We have to show that, if $\vv\in\VV(\cT)$ satisfies
$b(\duu,\vv)=0$ $\forall\duu\in\UU$
%=(\uone,\utwo,\uthree,\ufour,\tuone,\tutwo,\tuthree,\tufour)\in\UU$,
then $\vv=0$. Selecting functions $\duu$ with only one of the trace components non-zero
and all the field variables zero, Proposition~\ref{prop_can_jumps} shows that
$\vv\in\VV_0(\Omega)$. Furthermore, $\vv$ solves \eqref{adj1} with homogeneous data so that
$\vv=0$ by Lemma~\ref{la_adj1_well}.
\end{proof}

%===================================================================================================
\subsubsection{Proofs of Theorems~\ref{thm_stab1},~\ref{thm_DPG1}} \label{sec_pf1}

The well-posedness of \eqref{VF1} follows by standard arguments.
\begin{enumerate}
\item {\bf Boundedness of the functional and bilinear form.}
By definition of the norms in $\UU$ and $\VV(\cT)$, and using Proposition~\ref{prop_can_norms},
the uniform boundedness of $b(\cdot,\cdot)$ and $L$ is immediate.
\item {\bf Injectivity.} This is Lemma~\ref{la_inj1}.
\item {\bf Inf-sup condition.} By \cite[Theorem~3.3]{CarstensenDG_16_BSF}, the inf-sup condition
\begin{align} \label{infsup1}
   \sup_{0\not=\vv\in \VV(\cT)}
   \frac {b(\uu;\vv)}{\|\vv\|_{\VV(\cT)}}
   \gtrsim \|\uu\|_{\UU}
   \quad\forall\uu=(\uu_0,\tuu)\in \UU_0\times\tUU= \UU
\end{align}
follows from the inf-sup conditions
\begin{align}
   \label{infsup1b}
   \sup_{0\not=\vv\in \VV(\cT)}
   \frac{b(0,\tuu;\vv)}{\|\vv\|_{\VV(\cT)}}
   \gtrsim
   \|\tuu\|_{\tUU} \quad\forall\tuu\in\tUU
\end{align}
and
\begin{align}
   \label{infsup1a}
   \sup_{0\not=\vv\in \VV_0(\Omega)}
   &\frac{b(\uu_0,0;\vv)}{\|\vv\|_{\VV(\cT)}}
   \gtrsim
   \|\uu_0\| \quad\forall\uu_0\in\UU_0.
\end{align}
We note that Proposition~\ref{prop_can_jumps} implies that $\VV_0(\Omega)$ is the correct
test space in \eqref{infsup1a}. It corresponds to $Y_0$ in \cite{CarstensenDG_16_BSF}, see
relation (17) there.

Relation \eqref{infsup1b} is true (with constant $1$) by Proposition~\ref{prop_can_norms},
and inf-sup condition \eqref{infsup1a} holds due to Lemma~\ref{la_adj1_well}:
Selecting data
\(
   (\gone,\gtwo,\gthree,\gfour):= (\uone,-\utwo,-\uthree,-\ufour)
\)
in \eqref{adj1} with solution $\vv^*\in\VV_0(\Omega)$, we bound
\begin{align*}
   &\sup_{0\not=\vv\in \VV_0(\Omega)}
   \frac{b(\uu_0,0;\vv)}{\|\vv\|_{\VV(\cT)}}
   \ge
   \frac{\vdual{\uone}{\gone} - \vdual{\utwo}{\gtwo}
   - \vdual{\uthree}{\gthree} - \vdual{\ufour}{\gfour}}
        {\|\vv^*\|_{\VV(\cT)}}
   \gtrsim \|\uu_0\|.
\end{align*}
\end{enumerate}
We conclude that there is a unique and stable solution $\uu=(\uu_0,\tuu)\in \UU_0\times\tUU$
of \eqref{VF1}. It remains to note that $\uu_0$ solves \eqref{prob_sys1},
satisfies the homogeneous boundary conditions, and
$\tuu=(\traced{}(\uone),\traceg{}(\utwo),\traced{}(\uthree),\traceg{}(\ufour))$.
Furthermore, $\bu:=\uone$ solves \eqref{prob_org}.
This finishes the proof of Theorem~\ref{thm_stab1}.

Theorem~\ref{thm_DPG1} follows in the standard way. Noting that the DPG scheme minimizes
the residual
\(
   \|B(\uu-\uu_h)\|_{\VV(\cT)'},
\)
its quasi-optimal convergence in the norm $\|\cdot\|_{\UU}$ is due to the equivalence
of this norm and the residual norm $\|B\cdot\|_{\VV(\cT)'}$.
Specifically, the boundedness $\|B\cdot\|_{\VV(\cT)'}\lesssim \|\cdot\|_{\UU}$
is due to the boundedness of $b(\cdot,\cdot)$, and the inverse estimate is proved by
inf-sup property \eqref{infsup1}.

%===================================================================================================
\section{Second-order variational formulation and DPG method} \label{sec_DPG2}

In this section, we use the same notation as in \S\ref{sec_DPG1} for spaces, norms,
bilinear form, functional $L$, operator $B$, and trial-to-test operator $\ttt$.

Denoting $\bw:=-\grad\div\bu$, we write \eqref{prob_org} as the second-order system
\begin{subequations} \label{prob_sys2}
\begin{align}
   -\grad\div\bw+\bu &= \ff, \label{sys2a}\\
   \grad\div\bu+\bw  &=0,  \label{sys2b}
\end{align}
\end{subequations}
in $\Omega$ subject to $\bu\cdot\nn=\div\bu=0$ on $\Gamma$.
Testing \eqref{sys2a} and \eqref{sys2b} with piecewise smooth functions
$\bv$ and $-\btau$, respectively, and employing the trace operator
$\tracegd{}$, we obtain
\begin{align*}
   \vdual{\bu}{\bv-\grad\div\btau}_\cT
  -\vdual{\bw}{\btau+\grad\div\bv}_\cT
  +\dualgd{\tracegd{}(\bu)}{\btau}_\cS
  +\dualgd{\tracegd{}(\bw)}{\bv}_\cS
  &=
   \vdual{\ff}{\bv}.
\end{align*}
We introduce independent trace variables
\[
   \tbu = \tracegd{}(\bu),\
   \tbw = \tracegd{}(\bw),\
\]
and define the spaces
\begin{align*}
   &\UU := \UU_0 \times \tUU
       := \bigl(\bL_2(\Omega)\times \bL_2(\Omega)\bigr)
   \times \bigl(\trHgdz{\cS}\times \trHgd{\cS}\bigr),\\
   &\VV(\cT) := \Hgdiv{\cT}\times \Hgdiv{\cT}
\end{align*}
with (squared) norms
\begin{align*} %\label{norm_U1}
   &\|(\bu,\bw,\tbu,\tbw)\|_{\UU}^2
   := \|(\bu,\bw)\|^2 + \|(\tbu,\tbw)\|_{\tUU}^2
   := \|\bu\|^2 + \|\bw\|^2 + \|\tbu\|_{-1/2,1/2,\cS}^2 + \|\tbw\|_{-1/2,1/2,\cS}^2
\end{align*}
and
\begin{align*} %\label{norm_V1}
   \|(\bv,\btau)\|_{\VV(\cT)}^2
   &:=
   \|\bv\|_{\grad\div\!,\cT}^2 + \|\btau\|_{\grad\div\!,\cT}^2,
\end{align*}
respectively.
Then, our second variational formulation of \eqref{prob_org} reads:
\emph{Find $\uu=(\bu,\bw,\tbu,\tbw)\in\UU$ such that}
\begin{align} \label{VF2}
   b(\uu,\vv) = L(\vv)\quad\forall\vv=(\bv,\btau)\in \VV(\cT),
\end{align}
in operator form
\[
   \uu\in\UU:\quad B\uu = L\quad\text{in}\ \VV(\cT)'.
\]
Here,
\begin{align*}
   b(\uu,\vv)
   :=&
   \vdual{\bu}{\bv-\grad\div\btau}_\cT
  -\vdual{\bw}{\btau+\grad\div\bv}_\cT
  +\dualgd{\tbu}{\btau}_\cS
  +\dualgd{\tbw}{\bv}_\cS
\end{align*}
and
\[
   L(\vv) := \vdual{\ff}{\bv}.
\]

\begin{theorem} \label{thm_stab2}
For given $\ff\in\bL_2(\Omega)$ there exists
a unique solution $\uu=(\bu,\bw,\tbu,\tbw)\in\UU$ to \eqref{VF2}. It is uniformly bounded,
\[
   \|\uu\|_{\UU} \lesssim \|\ff\|
\]
with a hidden constant that is independent of $\ff$ and $\cT$.
Furthermore, $\bu$ solves \eqref{prob_org}.
\end{theorem}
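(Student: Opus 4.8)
The plan is to follow the same three-part recipe that worked for the first-order formulation in \S\ref{sec_pf1}: boundedness, injectivity of the adjoint, and an inf-sup condition obtained by decoupling field variables from traces. Boundedness of $b(\cdot,\cdot)$ and $L$ is immediate from the definition of $\|\cdot\|_{\VV(\cT)}$ and $\|\cdot\|_{\UU}$, using Proposition~\ref{prop_gd_norm} to control the trace terms $\dualgd{\tbu}{\btau}_\cS$ and $\dualgd{\tbw}{\bv}_\cS$ by $\|\tbu\|_{-1/2,1/2,\cS}\|\btau\|_{\grad\div\!,\cT}$ and $\|\tbw\|_{-1/2,1/2,\cS}\|\bv\|_{\grad\div\!,\cT}$. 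For the inf-sup condition I would invoke \cite[Theorem~3.3]{CarstensenDG_16_BSF} once more, reducing \eqref{VF2} to: (a) a trace inf-sup estimate $\sup_{\vv}b(0,0,\tbu,\tbw;\vv)/\|\vv\|_{\VV(\cT)}\gtrsim\|(\tbu,\tbw)\|_{\tUU}$, which holds with constant $1$ by Proposition~\ref{prop_gd_norm}; and (b) a field inf-sup estimate over the reduced test space $\VV_0(\Omega):=\Hgdivz{\Omega}\times\Hgdiv{\Omega}$ (which is the correct space by Proposition~\ref{prop_gd_jumps}), namely $\sup_{\vv\in\VV_0(\Omega)}b(\bu,\bw,0,0;\vv)/\|\vv\|_{\VV(\cT)}\gtrsim\|(\bu,\bw)\|$.

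The core work is therefore establishing (b), which amounts to well-posedness of the adjoint problem with continuous test functions: given $\bg_1,\bg_2\in\bL_2(\Omega)$, find $(\bv,\btau)\in\Hgdivz{\Omega}\times\Hgdiv{\Omega}$ with
\begin{align*}
   \bv-\grad\div\btau=\bg_1,\qquad \btau+\grad\div\bv=\bg_2.
\end{align*}
I would eliminate $\btau=\bg_2-\grad\div\bv$, giving the scalar-in-disguise equation $\bv+\grad\div(\grad\div\bv)=\bg_1+\grad\div\bg_2$, i.e.\ $(\grad\div)^2\bv+\bv=\tilde\ff$ with $\bv\cdot\nn=\div\bv=0$ on $\Gamma$. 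This is exactly the original problem \eqref{prob_org}, whose well-posedness I would obtain by testing with $\bv$ itself: the symmetric bilinear form $\vdual{\grad\div\bv}{\grad\div\dbv}+\vdual{\bv}{\dbv}$ on $\Hgdivz{\Omega}$ is coercive (indeed it \emph{is} the squared $\Hgdiv{}$-norm), so Lax--Milgram applies directly once one checks the right-hand side functional is bounded, using that $\bg_2\in\bL_2(\Omega)$ pairs against $\grad\div\dbv$ after integration by parts with no boundary term since $\dbv\in\Hgdivz{\Omega}$. Then $\btau:=\bg_2-\grad\div\bv$ automatically lies in $\Hgdiv{\Omega}$ because $\grad\div\btau=\grad\div\bg_2-\bv+\bg_1\in\bL_2(\Omega)$ (rearranging the first equation), wait — more carefully, $\grad\div\bv\in\bL_2(\Omega)$ only gives $\btau\in\bL_2(\Omega)$, and $\grad\div\btau=\bv-\bg_1\in\bL_2(\Omega)$ from the first equation, so indeed $\btau\in\Hgdiv{\Omega}$; and the bound $\|(\bv,\btau)\|_{\VV(\cT)}\lesssim\|\bg_1\|+\|\bg_2\|$ follows from the coercivity estimate. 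Injectivity of $B^*$ is then Lemma-style: if $b(\duu,\vv)=0$ for all $\duu\in\UU$, test with trace-only $\duu$ and apply Proposition~\ref{prop_gd_jumps} to conclude $\vv\in\VV_0(\Omega)$, then $\vv$ solves the adjoint problem with zero data, hence $\vv=0$.

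The main obstacle I anticipate is \textbf{not} the coercivity argument — that is genuinely clean here because the $(\grad\div)^2+\mathrm{id}$ operator is positive and the natural energy space is precisely $\Hgdivz{\Omega}$ — but rather the bookkeeping around the trace spaces and the application of \cite[Theorem~3.3]{CarstensenDG_16_BSF}: one must verify that $\trHgdz{\cS}$ and $\trHgd{\cS}$ are exactly the images of the relevant continuous spaces under $\tracegd{}$, that the reduced test space $\VV_0(\Omega)$ matches the annihilator condition from Proposition~\ref{prop_gd_jumps}, and that the boundary conditions $\bu\cdot\nn=\div\bu=0$ are correctly encoded by the choice $\tbu\in\trHgdz{\cS}$ (versus $\tbw\in\trHgd{\cS}$ unconstrained, mirroring the asymmetry in \eqref{sys2a}--\eqref{sys2b}). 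One must also confirm that the term $\vdual{\bu}{\bv}$ appearing with the ``wrong'' sign pattern relative to a skew form does not obstruct coercivity — it does not, since after elimination everything collapses to a genuinely coercive scalar problem. Finally, as in Theorem~\ref{thm_stab1}, I would close by noting $\uu_0=(\bu,\bw)$ solves \eqref{prob_sys2} in the distributional sense with the homogeneous boundary conditions, that $\bw=-\grad\div\bu$, and hence $\bu$ solves \eqref{prob_org}.
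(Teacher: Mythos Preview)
Your proposal is correct and follows essentially the same approach as the paper: boundedness, injectivity of $B^*$ via Proposition~\ref{prop_gd_jumps}, and the inf-sup condition split via \cite[Theorem~3.3]{CarstensenDG_16_BSF} into the trace estimate (Proposition~\ref{prop_gd_norm}) and adjoint well-posedness by Lax--Milgram on the $\Hgdiv{}$-inner product. The only cosmetic difference is that the paper eliminates $\bv$ (solving for $\btau\in\Hgdiv{\Omega}$ and recovering $\bv\in\Hgdivz{\Omega}$ from the discarded boundary duality) rather than eliminating $\btau$ as you do; both choices yield the identical coercive bilinear form and are equally clean.
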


For a proof of this theorem we refer to \S\ref{sec_pf2}.

To define the discretization scheme, we consider as before a (family of) discrete subspace(s)
$\UU_h\subset\UU$ and use the trial-to-test operator $\ttt:\;\UU\to \VV(\cT)$ defined by
\begin{align*}
   \ip{\ttt(\uu)}{\vv}_{\VV(\cT)} = b(\uu,\vv)\quad\forall\vv\in \VV(\cT)
\end{align*}
where $\ip{\cdot}{\cdot}_{\VV(\cT)}$ denotes the inner product of the new space $\VV(\cT)$,
\begin{align*}
   \ip{(\bv_1,\bv_2)}{(\dbv_1,\dbv_2)}_{\VV(\cT)}
   := \sum_{j=1}^2 \vdual{\bv_j}{\dbv_j}+\vdual{\grad\div\bv_j}{\grad\div\dbv_j}_\cT
\end{align*}
for $(\bv_1,\bv_2), (\dbv_1,\dbv_2)\in \VV_h(\cT)$.
The second DPG scheme for problem~\eqref{prob_org} is:
\emph{Find $\uu_h\in \UU_h$ such that}
\begin{align} \label{DPG2}
   b(\uu_h,\ttt\deltauu) = L(\ttt\deltauu) \quad\forall\deltauu\in \UU_h.
\end{align}
Again, it delivers a quasi-best approximation.

\begin{theorem} \label{thm_DPG2}
Let $\ff\in \bL_2(\Omega)$ be given.
For any finite-dimensional subspace $\UU_h\subset \UU$
there exists a unique solution $\uu_h\in \UU_h$ to \eqref{DPG2}. It satisfies the quasi-optimal
error estimate
\[
   \|\uu-\uu_h\|_{\UU} \lesssim \|\uu-\ww\|_{\UU}
   \quad\forall\ww\in \UU_h
\]
with a hidden constant that is independent of $\cT$.
\end{theorem}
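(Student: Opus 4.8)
The plan is to deduce Theorem~\ref{thm_DPG2} from Theorem~\ref{thm_stab2} using exactly the same abstract DPG argument already employed for Theorem~\ref{thm_DPG1} in \S\ref{sec_pf1}. The key observation is that the DPG scheme \eqref{DPG2} is a minimum residual method: $\uu_h$ minimizes $\|B(\uu-\deltauu)\|_{\VV(\cT)'}$ over $\deltauu\in\UU_h$, because the trial-to-test operator $\ttt$ realizes the Riesz representation of the residual in $\VV(\cT)$, so testing against $\ttt\deltauu$ is equivalent to imposing the normal equations for the least-squares problem. Consequently, quasi-optimality in the energy norm $\|B\cdot\|_{\VV(\cT)'}$ is automatic and Céa-type, with constant $1$; the only thing to check is that this residual norm is equivalent to $\|\cdot\|_{\UU}$ on $\UU$.

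First I would record the boundedness $\|B\uu\|_{\VV(\cT)'}\lesssim\|\uu\|_{\UU}$, which follows from the uniform boundedness of $b(\cdot,\cdot)$ on $\UU\times\VV(\cT)$. This boundedness is itself a consequence of the Cauchy--Schwarz inequality on the $L_2(\cT)$-dualities for the field terms, together with Proposition~\ref{prop_gd_norm}, which identifies the trace norm $\|\cdot\|_{-1/2,1/2,\cS}$ with the dual norm $\|\cdot\|_{(\grad\div\!,\cT)'}$ and hence controls the duality pairings $\dualgd{\tbu}{\btau}_\cS$ and $\dualgd{\tbw}{\bv}_\cS$ by $\|\tbu\|_{-1/2,1/2,\cS}\|\btau\|_{\grad\div\!,\cT}$ and similarly for $\tbw$. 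Second, for the reverse inequality $\|\uu\|_{\UU}\lesssim\|B\uu\|_{\VV(\cT)'}$ I would invoke the inf-sup condition that is established as part of the proof of Theorem~\ref{thm_stab2} in \S\ref{sec_pf2}; that inf-sup condition says precisely $\sup_{0\neq\vv\in\VV(\cT)} b(\uu,\vv)/\|\vv\|_{\VV(\cT)}\gtrsim\|\uu\|_{\UU}$, which is the norm equivalence we need. Combining the two inequalities gives $\|\uu-\uu_h\|_{\UU}\simeq\|B(\uu-\uu_h)\|_{\VV(\cT)'}$, and since $\uu_h$ minimizes the right-hand side over $\UU_h$ while $\uu-\ww$ is a competitor for any $\ww\in\UU_h$, we get $\|\uu-\uu_h\|_{\UU}\lesssim\|B(\uu-\ww)\|_{\VV(\cT)'}\lesssim\|\uu-\ww\|_{\UU}$.

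Existence and uniqueness of $\uu_h$ follow because \eqref{DPG2} is a square linear system on the finite-dimensional space $\UU_h$ whose associated bilinear form $(\uu_h,\deltauu)\mapsto b(\uu_h,\ttt\deltauu)=\ip{\ttt\uu_h}{\ttt\deltauu}_{\VV(\cT)}$ is symmetric and positive semidefinite, and in fact positive definite: if $\ttt\uu_h=0$ then $b(\uu_h,\vv)=0$ for all $\vv\in\VV(\cT)$, whence $\|B\uu_h\|_{\VV(\cT)'}=0$ and the norm equivalence forces $\uu_h=0$. Thus the Gram-type matrix is invertible and the discrete problem is uniquely solvable. All hidden constants inherited from Theorem~\ref{thm_stab2} and from Proposition~\ref{prop_gd_norm} are mesh-independent, so the final estimate is mesh-independent as claimed. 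The argument has no genuine obstacle: it is the textbook DPG wrapper, and the only nontrivial input---the inf-sup stability of the second-order ultraweak formulation---is exactly what Theorem~\ref{thm_stab2} provides. I would note briefly that this parallels verbatim the final paragraph of \S\ref{sec_pf1}, with $\eqref{infsup1}$ replaced by the second-order inf-sup condition, so it suffices to refer to that argument.
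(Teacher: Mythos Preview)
Your proposal is correct and follows essentially the same approach as the paper: the paper's proof of Theorem~\ref{thm_DPG2} simply states that it is identical to that of Theorem~\ref{thm_DPG1}, which in turn invokes the minimum-residual structure of the DPG scheme and the equivalence of $\|\cdot\|_{\UU}$ with the residual norm $\|B\cdot\|_{\VV(\cT)'}$ via boundedness of $b(\cdot,\cdot)$ and the inf-sup condition. You have spelled out these steps in somewhat more detail (in particular the existence/uniqueness argument), but the logical structure is the same.
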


A proof of this theorem is given in \S\ref{sec_pf2}.

%===================================================================================================
\subsection{Fully discrete method} \label{sec_discrete2}

In the case of the second-order formulation we restrict our fully discrete analysis
to two space dimensions ($d=2$) and lowest order approximations. Using the notation
for (piecewise) polynomial spaces from \S\ref{sec_discrete1}, our approximation space is
\begin{align} \label{Uh2}
   \UU_h:=\cP^0(\cT)^2\times\cP^0(\cT)^2
   \times \cP^0_0(\cS)\times \cP^{1,c}_0(\cS) \times\cP^0(\cS)\times \cP^{1,c}(\cS)
\end{align}
and as enriched test space we choose
\begin{align} \label{Vh2}
   \VV_h(\cT):=
    \cP^{3}(\cT)^2\times \cP^{3}(\cT)^2.
\end{align}
Here, $\cT$ denotes a (family of) shape-regular triangular mesh(es). The notation
for our approximation scheme is as before:
\begin{align} \label{DPG2_discrete}
   \uu_h\in\UU_h:\quad b(\uu_h,\ttt_h\deltauu) = L(\ttt_h\deltauu) \quad\forall\deltauu\in \UU_h
\end{align}
where the approximated trial-to-test operator $\ttt_h:\;\UU_h\to\VV_h(\cT)$ is defined accordingly.

The existence of a Fortin operator in this case is due to Lemma~\ref{la_Fortin_local} (given
in \S\ref{sec_Fortin2} below), showing the existence of a local variant
$\PFgd{T}:\;\Hgdiv{T}\to \cP^3(T)^2$.
We simply define $\PFgd{}:\;\Hgdiv{\cT}\to \cP^3(\cT)^2$ as
$(\PFgd{}\bv)|_T:=\PFgd{T}\bv|_T$ ($T\in\cT$), and
\[
   \Pi_F(\vv):=(\PFgd{}\bv,\PFgd{}\btau)\quad\text{for}\quad \vv=(\bv,\btau)\in \VV(\cT).
\]
The statements of Lemma~\ref{la_Fortin_local} imply that this operator
has the properties of a Fortin operator.
Then, analogously as Corollary~\ref{cor_DPG1_discrete}, we conclude the quasi-optimal
convergence of the fully discrete DPG scheme.

\begin{cor} \label{cor_DPG2_discrete}
Let $\ff\in \bL_2(\Omega)$ be given. Selecting the approximation and test spaces
$\UU_h$, $\VV_h(\cT)$ as in \eqref{Uh2}, \eqref{Vh2}, respectively, system \eqref{DPG2_discrete}
has a unique solution $\uu_h\in \UU_h$. It satisfies the quasi-optimal
error estimate
\[
   \|\uu-\uu_h\|_{\UU} \lesssim \|\uu-\ww\|_{\UU}
   \quad\forall\ww\in \UU_h
\]
with a hidden constant that is independent of $\cT$.
\end{cor}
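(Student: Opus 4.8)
The plan is to derive Corollary~\ref{cor_DPG2_discrete} by combining Theorem~\ref{thm_DPG2} with the abstract convergence theory for fully discrete DPG schemes from \cite[Theorem~2.1]{GopalakrishnanQ_14_APD}, exactly as was done for the first-order case in Corollary~\ref{cor_DPG1_discrete}. The only new ingredient needed is the existence of a Fortin operator $\Pi_F:\;\VV(\cT)\to\VV_h(\cT)$ that is uniformly bounded and satisfies $b(\uu_h,\vv-\Pi_F\vv)=0$ for all $\uu_h\in\UU_h$ and $\vv\in\VV(\cT)$. Since $b(\cdot,\cdot)$ decouples over the two test components and over elements, and since the trace terms only see $\btau$ and $\bv$ through $\tracegd{}$ evaluated against the discrete trace functions in $\UU_h$, it suffices to produce a \emph{local} operator $\PFgd{T}:\;\Hgdiv{T}\to\cP^3(T)^2$ with the right orthogonality and boundedness properties on each triangle $T$, and then set $(\PFgd{}\bv)|_T:=\PFgd{T}(\bv|_T)$ and $\Pi_F(\bv,\btau):=(\PFgd{}\bv,\PFgd{}\btau)$. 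This is precisely the content of Lemma~\ref{la_Fortin_local}, invoked in \S\ref{sec_discrete2}.

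The key steps, in order, are as follows. First I would spell out which moments of a test function $\vv=(\bv,\btau)$ the bilinear form $b(\uu_h,\cdot)$ actually tests against, for $\uu_h\in\UU_h$ from \eqref{Uh2}: the volume terms pair $\bv$ and $\grad\div\bv$ (resp.\ $\btau$, $\grad\div\btau$) with the piecewise constants $\cP^0(\cT)^2$ in the field slots $\bu,\bw$, while the skeleton terms pair $\tracegd{}$ of $\bv$ and $\btau$ — i.e.\ the two classical components $\bv\cdot\nn|_{\partial T}$ and $(\div\bv)|_{\partial T}$, recall the identification after \eqref{trace_rel} — with the discrete trace spaces $\cP^0_0(\cS)\times\cP^{1,c}_0(\cS)$ and $\cP^0(\cS)\times\cP^{1,c}(\cS)$. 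So $\PFgd{T}\bv$ must reproduce (a) the element-wise mean of $\bv$ and of $\grad\div\bv$, and (b) on each face the lowest-order moment of $\bv\cdot\nn$ and the two lowest moments of $\div\bv$. Second, I would cite Lemma~\ref{la_Fortin_local} for the existence of such a $\PFgd{T}$ mapping into $\cP^3(T)^2$ with $\|\PFgd{T}\bv\|_{\grad\div\!,T}\lesssim\|\bv\|_{\grad\div\!,T}$, the constant depending only on shape regularity; the choice of degree $3$ in \eqref{Vh2} is dictated by that lemma (one needs enough polynomial freedom to match the face moments of $\div\bv$ together with the volume moments, in the spirit of \cite[Lemmas~3.2,3.3]{GopalakrishnanQ_14_APD}). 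Third, assembling locally gives $\Pi_F$ uniformly bounded on $\VV_h(\cT)$ and, by construction of the matched moments, $b(\uu_h,\vv-\Pi_F\vv)=0$ for every $\uu_h\in\UU_h$; hence $\Pi_F$ is a Fortin operator. Finally, \cite[Theorem~2.1]{GopalakrishnanQ_14_APD} applied with the well-posedness and discrete inf-sup from Theorem~\ref{thm_DPG2} yields unique solvability of \eqref{DPG2_discrete} and the quasi-optimal estimate $\|\uu-\uu_h\|_\UU\lesssim\|\uu-\ww\|_\UU$ for all $\ww\in\UU_h$, with constant independent of $\cT$.

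The substantive obstacle is entirely packaged inside Lemma~\ref{la_Fortin_local}: constructing the local operator $\PFgd{T}$ into $\cP^3(T)^2$ that simultaneously controls $\bv$ in $\bL_2$ and $\grad\div\bv$ in $\bL_2$ while prescribing the face moments of $\bv\cdot\nn$ and $\div\bv$, and doing so with a bound depending only on shape regularity rather than on $\diam(T)$ — this is the part that restricts the analysis to $d=2$ and lowest order, since the clean degree count and the scaling argument are delicate in that regime. At the level of the corollary itself, however, there is no obstacle beyond correctly matching which discrete test degrees are needed against the trial space \eqref{Uh2}; once Lemma~\ref{la_Fortin_local} is granted, the proof is a direct citation of the abstract DPG framework, mirroring Corollary~\ref{cor_DPG1_discrete} verbatim.
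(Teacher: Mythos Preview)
Your proposal is correct and follows essentially the same route as the paper: the corollary is derived by assembling the local operator $\PFgd{T}$ from Lemma~\ref{la_Fortin_local} into a global Fortin operator $\Pi_F(\bv,\btau)=(\PFgd{}\bv,\PFgd{}\btau)$, verifying that its orthogonality properties match the moments tested by $b(\uu_h,\cdot)$ for $\uu_h\in\UU_h$, and then invoking \cite[Theorem~2.1]{GopalakrishnanQ_14_APD} together with Theorem~\ref{thm_DPG2}, exactly as in Corollary~\ref{cor_DPG1_discrete}. Your additional explicit accounting of which volume and face moments must be matched is accurate and consistent with the statement of Lemma~\ref{la_Fortin_local}.
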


%===================================================================================================
\subsection{Proofs for the second-order formulation} \label{sec_pf2}

The steps in this section are analogous to the ones in \S\ref{sec_pf1}.

%===================================================================================================
\subsubsection{Stability of the adjoint problem} \label{sec_adj2}

The adjoint problem to \eqref{prob_sys2} with continuous test functions
\[
   (\bv,\btau)\in \VV_0(\Omega) := \Hgdivz{\Omega}\times \Hgdiv{\Omega}
\]
reads as follows:
Given $\bg_1,\bg_2\in \bL_2(\Omega)$ 
\emph{find $(\bv,\btau)\in \VV_0(\Omega)$ such that}
\begin{align} \label{adj2}
   \bv-\grad\div\btau = \bg_1,\quad
   \btau+\grad\div\bv = \bg_2.
\end{align}
It is a well-posed, fully non-homogeneous version of \eqref{prob_sys2}.

\begin{lemma} \label{la_adj2_well}
There is a unique solution $\vv=(\bv,\btau)\in \VV_0(\Omega)$ to \eqref{adj2},
and the bound
\[
   \|\vv\|_{\VV(\cT)}^2 \lesssim \|\bg_1\|^2 + \|\bg_2\|^2
\]
holds with a constant that is independent of the given data.
\end{lemma}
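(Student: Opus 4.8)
The plan is to reduce the first-order system \eqref{adj2} to a single fourth-order equation in $\div\bv$, just as one does for the primal problem. Eliminating $\btau = \bg_2 - \grad\div\bv$ from the first equation gives $\bv - \grad\div(\bg_2 - \grad\div\bv) = \bg_1$, i.e. $\bv + (\grad\div)^2\bv = \bg_1 + \grad\div\bg_2$ in $\Omega$. Taking the divergence and writing $\phi := \div\bv$, this becomes the scalar fourth-order equation $\phi + (\div\grad)^2\phi = \div\bg_1 + (\div\grad)(\div\bg_2)$ with the boundary conditions inherited from $\bv \in \Hgdivz{\Omega}$, namely $\bv\cdot\nn = 0$ and $\phi = \div\bv = 0$ on $\Gamma$; note $\div\grad = \Delta$ on scalars so this is $\phi + \Delta^2\phi$-type. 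The first step is therefore to set up the correct weak formulation for this reduced problem and establish its well-posedness; the natural variational space is an $H^2$-type space (or rather a mixed/first-order variational setting mirroring \eqref{adj1_mixed}) and the bilinear form is coercive modulo the now-familiar non-standard sign, to be handled exactly as in Lemma~\ref{la_adj1_well} via a Peetre--Tartar / Rellich compactness argument.

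More concretely, I would introduce an auxiliary scalar $\eta := -\div\grad\phi = -\Delta\div\bv$ and rewrite the reduced equation as the second-order system $\eta + \div\grad\phi = 0$, $\phi - \div\grad\eta = (\text{data})$ with $\phi|_\Gamma = 0$ and a natural boundary condition on $\eta$ coming from the weak form, paralleling the structure of \eqref{prob_sys2} itself. This is a mixed saddle-point system of exactly the same flavour as \eqref{adj1_mixed}: symmetric up to the indefinite block $-\vdual{\grad\phi}{\grad\delta\!\phi}$ (or $+$, depending on how one orders the equations). The inf-sup condition is then verified by the same device used there — pick the test function $(\delta\!\phi,\delta\!\eta)$ to be a sign-adjusted copy of the trial function plus a correction along a fixed function solving a Poisson problem — so that one recovers control of $\|\grad\phi\|$, $\|\grad\eta\|$ and the lower-order terms, with compactness absorbing the zeroth-order contributions. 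Injectivity follows by testing with the skew-adjusted pair and reading off that both components are constant, hence zero by the boundary conditions.

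Once $\phi = \div\bv$ is obtained with the bound $\|\phi\|_{\text{appropriate}} \lesssim \|\bg_1\| + \|\bg_2\|$ (more precisely, controlling $\phi$, $\grad\phi$, and $\div\grad\phi$ in $L_2$), I reconstruct the solution: set $\bw := -\grad\div\bv = -\grad\phi$ so far as an auxiliary, then solve $\div\bv = \phi$ with $\bv\cdot\nn|_\Gamma = 0$ for $\bv \in \Hdivz{\Omega}$ — this is the standard right-inverse of the divergence on $\Hdivz{\Omega}$, giving $\|\bv\|_{\div} \lesssim \|\phi\|$, and note that $\grad\div\bv = \grad\phi \in \bL_2(\Omega)$ so indeed $\bv \in \Hgdivz{\Omega}$. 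Wait — that reconstruction underdetermines $\bv$, so instead I would define $\bv$ directly from the equation $\bv = \bg_1 - (\grad\div)^2\bv$ once $\div\bv = \phi$ is known, i.e. $\bv := \bg_1 + \grad\div\bg_2 - \grad\div\grad\phi$ when this is meaningful; cleaner is to observe that the second-order system \eqref{adj2} directly gives $\bv = \bg_1 + \grad\div\btau$ and $\btau = \bg_2 - \grad\div\bv$ as a fixed-point/substitution once $\div\bv = \phi$ and $\div\btau$ are pinned down by the scalar problem. Then $\btau := \bg_2 - \grad\phi$, which lies in $\Hgdiv{\Omega}$ because $\grad\div\btau = \grad\div\bg_2 - \grad\div\grad\phi \in \bL_2(\Omega)$ by the regularity $\div\grad\phi \in H^1$-ish secured above, and the boundary condition on $\btau$ (there is none beyond $\btau \in \Hgdiv{\Omega}$) is automatic. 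Verifying $\bv \in \Hgdivz{\Omega}$, i.e. $\bv\cdot\nn = \div\bv = 0$ on $\Gamma$, uses that $\div\bv = \phi = 0$ on $\Gamma$ and that the $\bv\cdot\nn$ condition is imposed through the choice of solution space in the reduced mixed problem (the discarded boundary duality, exactly as in the parenthetical remark following \eqref{adj1_mixed}).

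The main obstacle I anticipate is pinning down the exact function spaces and boundary conditions in the reduced scalar formulation so that the Peetre--Tartar argument applies cleanly and so that the reconstructed $(\bv,\btau)$ genuinely lands in $\Hgdivz{\Omega}\times\Hgdiv{\Omega}$ with the asserted $\VV(\cT)$-bound — in particular making sure one has enough regularity of $\div\grad\phi$ (one full extra derivative, coming from the $H^2$-type a priori estimate for the fourth-order scalar problem on the Lipschitz domain $\Omega$) to conclude $\grad\div\btau \in \bL_2$. The indefinite-block inf-sup verification is routine given Lemma~\ref{la_adj1_well} as a template, and the bilinear-form/functional boundedness is immediate; the bookkeeping of which trace is "essential" versus "natural" is where care is needed, but it is entirely parallel to the first-order case.
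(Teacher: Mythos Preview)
Your proposal misses the key structural observation and takes a much harder road than necessary. The paper's proof is three lines: eliminate $\bv$ (not $\btau$) using $\bv=\bg_1+\grad\div\btau$, and the second equation becomes the variational problem
\[
   \vdual{\grad\div\btau}{\grad\div\dbtau}+\vdual{\btau}{\dbtau}
   =\vdual{\bg_2}{\dbtau}-\vdual{\bg_1}{\grad\div\dbtau}
   \quad\forall\dbtau\in\Hgdiv{\Omega}.
\]
The left-hand side is \emph{exactly} the $\Hgdiv{\Omega}$ inner product, so Lax--Milgram applies immediately. Then $\bv:=\bg_1+\grad\div\btau$, and the discarded boundary duality furnishes $\bv\in\Hgdivz{\Omega}$. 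No saddle point, no Peetre--Tartar, no compactness.

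Your expectation of an indefinite block ``to be handled exactly as in Lemma~\ref{la_adj1_well}'' is the conceptual error. In the first-order case the reduced $(\vtwo,\vfour)$-system carries Laplacians of opposite sign because the boundary conditions are split ($\vtwo\in H^1_0$, $\vfour\in H^1$); here all essential conditions sit on $\bv$, and eliminating it leaves a coercive problem for $\btau\in\Hgdiv{\Omega}$ with no boundary condition. Even if you eliminate $\btau$ instead, the reduced operator is $I+(\grad\div)^2$ with a plus sign, again coercive on $\Hgdivz{\Omega}$.

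Your scalar detour through $\phi=\div\bv$ is where things actually go wrong. The map $\bv\mapsto\div\bv$ loses the divergence-free part of $\bv$, so the reconstruction is genuinely underdetermined (as you noticed). Your fallback, $\bv:=\bg_1+\grad\div\bg_2-\grad\Delta\phi$, requires $\grad\div\bg_2\in\bL_2(\Omega)$ or equivalently $\Delta\phi\in H^1(\Omega)$; on a general Lipschitz domain this $H^3$-type regularity for the biharmonic problem is not available, so this step fails. That obstacle is entirely an artifact of the scalar reduction: in the direct vector formulation one reads off $\grad\div\btau=\bv-\bg_1\in\bL_2(\Omega)$ from the first equation without any extra regularity.
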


\begin{proof}
Eliminating $\bv$, \eqref{adj2} becomes
\[
   \btau+\grad\div(\bg_1+\grad\div\btau) = \bg_2,
\]
in variational form: {\em Find $\btau\in\Hgdiv{\Omega}$ such that}
\begin{align} \label{adj2_mixed}
   \vdual{\grad\div\btau}{\grad\div\dbtau} + \vdual{\btau}{\dbtau}
   = \vdual{\bg_2}{\dbtau} - \vdual{\bg_1}{\grad\div\dbtau}
   \quad\forall \dbtau\in \Hgdiv{\Omega}.
\end{align}
As for the first-order system (see the proof of Lemma~\ref{la_adj1_well}),
the discarded duality between the trace
$\tracegd{}(\bg_1+\grad\div\btau)$ and $\dbtau$ implies the homogeneous boundary condition
for $\bv:=\gone+\grad\div\btau$.
By the Lax--Milgram lemma, problem \eqref{adj2_mixed} is well posed.
Then $(\bv,\btau)$ with $\bv:=\gone+\grad\div\btau$ solves \eqref{adj2} and satisfies
the claimed bound.
\end{proof}

\begin{lemma} \label{la_inj2}
The adjoint operator $B^*:\;\VV(\cT)\to \UU'$ is injective.
\end{lemma}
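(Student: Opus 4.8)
The plan is to mimic the proof of Lemma~\ref{la_inj1} exactly, now using the grad-div trace machinery instead of the canonical trace machinery. We must show that if $\vv=(\bv,\btau)\in\VV(\cT)$ satisfies $b(\duu,\vv)=0$ for all $\duu\in\UU$, then $\vv=0$. Recall that the bilinear form is
\[
   b(\uu,\vv)
   =\vdual{\bu}{\bv-\grad\div\btau}_\cT
   -\vdual{\bw}{\btau+\grad\div\bv}_\cT
   +\dualgd{\tbu}{\btau}_\cS
   +\dualgd{\tbw}{\bv}_\cS ,
\]
so the four groups of test variables $\bu$, $\bw$, $\tbu$, $\tbw$ can be varied independently.

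First I would test against the field variables only. Choosing $\duu$ with $\bw=\tbu=\tbw=0$ and $\bu$ arbitrary in $\bL_2(\Omega)$ forces $\bv-\grad\div\btau=0$, i.e.\ $\bv=\grad\div\btau$ piecewise; choosing $\bu=\tbu=\tbw=0$ and $\bw$ arbitrary forces $\btau+\grad\div\bv=0$, i.e.\ $\btau=-\grad\div\bv$ piecewise. Next I would test against the trace variables. Taking $\tbu\in\trHgdz{\cS}$ arbitrary and the rest zero gives $\dualgd{\tbu}{\btau}_\cS=0$ for all $\tbu\in\trHgdz{\cS}=\tracegd{}(\Hgdivz{\Omega})$, which by the definition of the trace operator means $\dualgd{\tracegd{}(\bz)}{\btau}_\cS=0$ for all $\bz\in\Hgdivz{\Omega}$; by the second equivalence in Proposition~\ref{prop_gd_jumps} this yields $\btau\in\Hgdiv{\Omega}$. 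Similarly, taking $\tbw\in\trHgd{\cS}=\tracegd{}(\Hgdiv{\Omega})$ arbitrary gives $\dualgd{\tracegd{}(\bz)}{\bv}_\cS=0$ for all $\bz\in\Hgdiv{\Omega}$, and the first equivalence in Proposition~\ref{prop_gd_jumps} yields $\bv\in\Hgdivz{\Omega}$. Hence $\vv=(\bv,\btau)\in\VV_0(\Omega)=\Hgdivz{\Omega}\times\Hgdiv{\Omega}$.

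It remains to observe that, with $\bg_1=\bg_2=0$, the pair $(\bv,\btau)$ solves the homogeneous adjoint problem \eqref{adj2}: indeed the two field-variable conditions $\bv-\grad\div\btau=0$ and $\btau+\grad\div\bv=0$ are precisely \eqref{adj2} with zero data, and the conclusion $\bv,\grad\div\bv,\grad\div\btau\in\bL_2(\Omega)$ (together with $\btau\in\bL_2(\Omega)$) is consistent with $\vv\in\VV_0(\Omega)$. By the uniqueness part of Lemma~\ref{la_adj2_well} we conclude $\vv=0$, which proves injectivity of $B^*$.

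The argument is entirely routine; the only point requiring a little care is the bookkeeping of which variant of Proposition~\ref{prop_gd_jumps} applies to $\bv$ versus $\btau$ — the zero trace space $\trHgdz{\cS}$ tests $\btau$ into $\Hgdiv{\Omega}$ while the full trace space $\trHgd{\cS}$ tests $\bv$ into $\Hgdivz{\Omega}$ (the homogeneous boundary condition lands on $\bv$, matching $\VV_0(\Omega)$). I expect no real obstacle here since all the substantive work (the trace characterizations and the well-posedness of the adjoint problem) has already been carried out in the earlier propositions and in Lemma~\ref{la_adj2_well}.
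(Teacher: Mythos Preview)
Your proof is correct and follows exactly the approach the paper indicates: testing with the trace variables and invoking Proposition~\ref{prop_gd_jumps} to land $\vv$ in $\VV_0(\Omega)$, then testing with the field variables to recognize the homogeneous adjoint system \eqref{adj2} and concluding $\vv=0$ by Lemma~\ref{la_adj2_well}. The paper's own proof is a one-line reference back to Lemma~\ref{la_inj1}, substituting Proposition~\ref{prop_gd_jumps} and Lemma~\ref{la_adj2_well} for their first-order counterparts, so your write-up is simply a more explicit version of the same argument.
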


\begin{proof}
The proof is identical to the one of Lemma~\ref{la_inj1},
using Proposition~\ref{prop_gd_jumps} and  Lemma~\ref{la_adj2_well} instead of
Proposition~\ref{prop_can_jumps} and  Lemma~\ref{la_adj1_well}, respectively.
\end{proof}

%===================================================================================================
\subsubsection{Proofs of Theorems~\ref{thm_stab2},~\ref{thm_DPG2}} \label{sec_pf2}

To show the well-posedness of \eqref{VF2} we repeat the arguments from \S\ref{sec_pf1}.
The boundedness of the functional and bilinear form is identical to the previous case.
The injectivity of $B^*$ holds by Lemma~\ref{la_inj2}. In order to prove
the inf-sup condition it is enough to show
\begin{align}
   \label{infsup2b}
   \sup_{0\not=\vv\in \VV(\cT)}
   \frac{b(0,\tuu;\vv)}{\|\vv\|_{\VV(\cT)}}
   \gtrsim
   \|\tuu\|_{\tUU} \quad\forall\tuu\in\tUU
\end{align}
and
\begin{align}
   \label{infsup2a}
   \sup_{0\not=\vv\in \VV_0(\Omega)}
   &\frac{b(\uu_0,0;\vv)}{\|\vv\|_{\VV(\cT)}}
   \gtrsim
   \|\uu_0\| \quad\forall\uu_0\in\UU_0.
\end{align}
The test space in \eqref{infsup2a} is the appropriate one due to Proposition~\ref{prop_gd_jumps}.

Relation \eqref{infsup2b} is true (with constant $1$) by Proposition~\ref{prop_gd_norm},
and inf-sup condition \eqref{infsup2a} can be seen by using Lemma~\ref{la_adj2_well}
with data
\(
   (\bg_1,\bg_2):= (\bu,-\bw)
\)
in \eqref{adj2}.
We conclude that there is a unique and stable solution $\uu=(\uu_0,\tuu)\in \UU_0\times\tUU$
of \eqref{VF2}. It remains to note that $(\bu,\bw)$ solves \eqref{prob_sys2},
$\bu$ satisfies the homogeneous boundary conditions, and
$\tbu=\tracegd{}(\bu)$, $\tbw=\tracegd{}(\bw)$.
Furthermore, $\bu$ solves \eqref{prob_org}.
This finishes the proof of Theorem~\ref{thm_stab2}.

The proof of Theorem~\ref{thm_DPG2} is identical to the one of Theorem~\ref{thm_DPG1}.

%===================================================================================================
\subsubsection{Fortin operator} \label{sec_Fortin2}

Let $\Tref$ denote the reference triangle with vertices $(0,0)$, $(1,0)$, $(0,1)$.
We start with a preliminary Fortin operator $\pPFgd{\Tref}$ on the reference element.
The corresponding operator on an element $T\in\cT$ is not uniformly bounded
with respect to $\diam(T)$ so that it has to be adapted. This will be studied in
Lemma~\ref{la_Fortin_local}.

\begin{lemma} \label{la_Fortin_ref}
There is a linear bounded operator $\pPFgd{\Tref}:\;\Hgdiv{\Tref}\to \cP^3(\Tref)^2$
that satisfies
for any $\bw\in\cP^0(\Tref)^2$ and $\tbw\in\cP^0(\partial\Tref)\times \cP^{1,c}(\partial\Tref)$
\begin{align*}
   \vdual{\bw}{(1-\pPFgd{\Tref})\bv}_\Tref = 
   \dualgd{\tbw}{(1-\pPFgd{\Tref})\bv}_{\partial\Tref} = 0\quad
   \quad\forall \bv\in\Hgdiv{\Tref}.
\end{align*}
\end{lemma}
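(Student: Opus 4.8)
The plan is to construct $\pPFgd{\Tref}$ by prescribing enough degrees of freedom on $\cP^3(\Tref)^2$ so that the two orthogonality conditions are met, and then verify boundedness via the usual reference-element compactness argument. Concretely, I would first identify the functionals that must be preserved. The condition $\vdual{\bw}{(1-\pPFgd{\Tref})\bv}_\Tref=0$ for all $\bw\in\cP^0(\Tref)^2$ means the element average of $\pPFgd{\Tref}\bv$ must equal that of $\bv$, i.e.\ $\int_\Tref \pPFgd{\Tref}\bv = \int_\Tref \bv$ (two scalar conditions). For the trace condition, recall from \S\ref{sec_traces_gd} that $\tbw=(h_1,h_2)\in\cP^0(\partial\Tref)\times\cP^{1,c}(\partial\Tref)$ pairs with $\bv$ through $\dualgd{\tbw}{\bv}_{\partial\Tref}=\duald{h_1}{\div\bv}_{\partial\Tref}-\dualg{h_2}{\bv\cdot\nn}_{\partial\Tref}$; so using $\tracegd{\Tref}(\pPFgd{\Tref}\bv)=(\traced{\Tref}(\pPFgd{\Tref}\bv),\traceg{\Tref}(\div\pPFgd{\Tref}\bv))$, I need $(\pPFgd{\Tref}\bv)\cdot\nn$ to match $\bv\cdot\nn$ against $\cP^1(\partial\Tref)$ edgewise (the normal component paired with a continuous piecewise-linear), and $\div\pPFgd{\Tref}\bv$ to match $\div\bv$ against $\cP^0$ on each edge, i.e.\ the three edge-integrals of $\div\pPFgd{\Tref}\bv$ must equal those of $\div\bv$.

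The second step is to check that these functionals are unisolvent on a suitable subspace of $\cP^3(\Tref)^2$, or at least that $\cP^3(\Tref)^2$ is rich enough that one can define a projection onto it realizing them; since $\dim\cP^3(\Tref)^2=20$ and the number of constraints is modest (two interior moments, three edge moments of the divergence, and per-edge matching of the normal trace against $\cP^1$, i.e.\ $2$ per edge), degree $3$ should give ample room. I would define $\pPFgd{\Tref}\bv$ by first fixing its normal trace on each edge as the $L_2(e)$-projection of $\bv\cdot\nn$ onto $\cP^1(e)$ using appropriate edge bubbles (RT- or BDM-type shape functions of degree $\le 3$), then correcting with divergence-controlling interior/edge functions so that the edge-integrals of the divergence match, and finally adding a curl-type interior correction (divergence-free, zero normal trace) to fix the two element averages without disturbing anything else. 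Linearity and boundedness $\|\pPFgd{\Tref}\bv\|_{\Hgdiv{\Tref}}\lesssim\|\bv\|_{\Hgdiv{\Tref}}$ follow because each ingredient is a bounded linear functional of $\bv$ on $\Hgdiv{\Tref}$: the edge traces $\bv\cdot\nn$ and $\div\bv|_e$ are controlled via $\|\bv\|_{\grad\div,\Tref}$ (using Proposition~\ref{prop_gd}, which gives $\div\bv\in H^1(\Tref)$ and hence well-defined edge traces), and the interior moments are trivially bounded.

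The main obstacle I expect is the trace term involving $\bv\cdot\nn$: although $\bv\in\Hgdiv{\Tref}$ controls $\div\bv$ in $H^1$, it gives only $\bv\in\Hdiv{\Tref}$, so $\bv\cdot\nn$ lives merely in $H^{-1/2}(\partial\Tref)$ and its $L_2(e)$-projection onto $\cP^1(e)$ is not obviously bounded by $\|\bv\|_{\grad\div,\Tref}$ in the right norm. The resolution, following \cite{GopalakrishnanQ_14_APD} and the analysis of $\Pi^{\mathrm{div}}$ there, is that the pairing $\dualg{h_2}{\bv\cdot\nn}_{\partial\Tref}$ should be interpreted through the trace operator $\traced{\Tref}$ applied to $\bv$, i.e.\ one works with the duality $\duald{\traced{\Tref}(\bv)}{z}_{\partial\Tref}$ for $z\in H^1(\Tref)$ rather than a genuine $L_2$ edge integral; defining the relevant moments of $\pPFgd{\Tref}\bv$ by testing against a lifted $H^1$ function and invoking $\|\traced{\Tref}(\bv)\|_{(1,\Tref)'}\le\|\bv\|_{\div,\Tref}$ restores boundedness. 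I would also need to double-check that the interior and edge corrections can be chosen simultaneously without conflict, which amounts to a small finite-dimensional solvability check on the reference element; this is routine but must be done carefully. The non-uniformity in $\diam(T)$ on a general element, arising from the scaling mismatch between the $L_2$ and $\grad\div$ contributions under the Piola transform, is then deferred to Lemma~\ref{la_Fortin_local}.
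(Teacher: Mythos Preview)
Your approach is viable but genuinely different from the paper's. You propose an explicit, sequential construction: fix the edge moments of $\bv\cdot\nn$, then correct the edge integrals of $\div\bv$, then fix the interior averages with a divergence-free, zero-normal-trace correction. The paper instead follows the strategy of Nagaraj, Petrides and Demkowicz \cite{NagarajPD_17_CDF}: define $\pPFgd{\Tref}\bv:=\bv^*$ implicitly as the first component of the solution $(\bv^*,\bw,\tbv)\in\cP^3(\Tref)^2\times\cP^0(\Tref)^2\times(\cP^0(\partial\Tref)\times\cP^{1,c}(\partial\Tref))$ of a finite-dimensional mixed (saddle-point) system whose constraint equations are exactly the orthogonality conditions, and whose top block is the $\Hgdiv{\Tref}$-inner product; well-posedness reduces to checking that the resulting square matrix is invertible. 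Boundedness then follows immediately because the right-hand side depends continuously on $\bv$ through the same constraint functionals.

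The trade-off: the mixed-system route is short and systematic---one solvability check replaces all of your staged constructions and the delicate verification that later corrections do not disturb earlier ones. Your constructive route, if carried out, would yield more explicit basis functions and might make the subsequent scaling analysis in Lemma~\ref{la_Fortin_local} more transparent, but the ``simultaneous solvability'' step you flag is precisely the whole content of the lemma, and your sequential decomposition does not obviously simplify it (for instance, correcting the edge integrals of $\div\bv$ without perturbing the already-fixed normal-trace moments is not automatic). A minor imprecision: the space $\cP^{1,c}(\partial\Tref)$ is three-dimensional (continuous piecewise linear, one degree of freedom per vertex), not six (two per edge), so you are imposing three normal-trace constraints rather than six; this only helps, but your count should be adjusted.
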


\begin{proof}
For the construction of this operator we employ the strategy from Nagaraj \emph{et al.}
\cite{NagarajPD_17_CDF}, see also \cite{FuehrerH_19_FDD}.
For given $\bv\in \Hgdiv{\Tref}$ we define $\pPFgd{\Tref}\bv:=\bv^*$ where
\[
   (\bv^*,\bw,\tbv)\in
   \cP^3(\Tref)^2\times \cP^0(\Tref)^2\times
   \bigl(\cP^0(\partial\Tref)\times \cP^{1,c}(\partial\Tref)\bigr)
\]
is the solution to the mixed problem
\begin{alignat*}{3}
   &\ip{\bv^*}{\dbv}_{\grad\div\!,\Tref}
   + \vdual{\bw}{\dbv}_\Tref + \dualgd{\tbv}{\dbv}_{\partial\Tref} &&= 0
   &&\quad\forall\dbv\in\cP^3(\Tref)^2,\\
   &\vdual{\dbu}{\bv^*}_\Tref &&= \vdual{\dbu}{\bv}_\Tref
   &&\quad\forall\dbu\in\cP^0(\Tref)^2,\\
   &\dualgd{\dtbu}{\bv^*}_{\partial\Tref} &&= \dualgd{\dtbu}{\bv}_{\partial\Tref}
   &&\quad\forall\dtbu\in \cP^0(\partial\Tref)\times \cP^{1,c}(\partial\Tref).
\end{alignat*}
Using a canonical basis, one can show that the matrix of this problem is
invertible, thus giving the result.
\end{proof}

\begin{lemma} \label{la_Fortin_local}
For $T\in\cT$ there is a linear operator $\PFgd{T}:\;\Hgdiv{T}\to \cP^3(T)^2$
that satisfies
for any $\bw\in\cP^0(T)^2$ and $\tbw\in\cP^0(\partial T)\times \cP^{1,c}(\partial T)$
\begin{align*}
   \vdual{\bw}{(1-\PFgd{T})\bv}_T = 
   \vdual{\bw}{\grad\div(1-\PFgd{T})\bv}_T =
   \dualgd{\tbw}{(1-\PFgd{T})\bv}_{\partial T} = 0
   \quad\forall \bv\in\Hgdiv{T},
\end{align*}
and which is uniformly bounded with respect to $\diam(T)$,
\[
   \|\PFgd{T}\bv\|_{\grad\div\!,T} \lesssim \|\bv\|_{\grad\div\!,T}
   \quad\forall\bv\in\Hgdiv{T},\ T\in\cT.
\]
\end{lemma}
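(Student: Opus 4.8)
The plan is to obtain $\PFgd{T}$ from the reference-element operator $\pPFgd{\Tref}$ of Lemma~\ref{la_Fortin_ref} by a scaling argument, exactly in the spirit of \cite{NagarajPD_17_CDF,FuehrerH_19_FDD}. First I would fix the affine map $F_T:\Tref\to T$, $\hat\xx\mapsto B_T\hat\xx+b_T$, with $\|B_T\|\simeq\diam(T)$ and $\|B_T^{-1}\|\simeq\diam(T)^{-1}$ by shape-regularity. The subtlety is that the operator $\grad\div$ is \emph{not} preserved by the Piola transform in the way the plain $\div$ is, so one must check which pullback makes the two diagrams commute. Since $\div$ acting on a (contravariant) Piola-transformed field scales like $|\det B_T|^{-1}$ times the reference divergence, a second application of $\grad\div$ picks up an extra factor $B_T^{-\transp}$; composing two Piola transforms (one for $\bv$, one for $\grad\div\bv$ viewed as a field) is the natural construction. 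Concretely I would define, for $\bv\in\Hgdiv{T}$, a reference field $\hat\bv:=\mathcal{P}_T^{-1}\bv$ by the appropriate Piola map, set $\hat\bv^*:=\pPFgd{\Tref}\hat\bv$, and then $\PFgd{T}\bv:=\mathcal{P}_T\hat\bv^*$. Polynomial degrees are preserved since $\mathcal{P}_T$ maps $\cP^3(\Tref)^2$ onto $\cP^3(T)^2$ (affine pullback of polynomials, with the constant Jacobian factor absorbed).

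Next I would verify the three orthogonality identities. The key point is that each test object in the statement — $\bw\in\cP^0(T)^2$, and $\tbw\in\cP^0(\partial T)\times\cP^{1,c}(\partial T)$ — pulls back under $\mathcal{P}_T$ (and its boundary trace version) to an object of the same type on $\Tref$: constants map to constants, and the boundary datum class $\cP^0\times\cP^{1,c}$ on $\partial T$ matches $\cP^0(\partial\Tref)\times\cP^{1,c}(\partial\Tref)$ because affine maps send edges to edges and preserve degrees. Then $\vdual{\bw}{(1-\PFgd{T})\bv}_T$, $\vdual{\bw}{\grad\div(1-\PFgd{T})\bv}_T$, and $\dualgd{\tbw}{(1-\PFgd{T})\bv}_{\partial T}$ each equal (up to a nonzero Jacobian constant) the corresponding reference-element pairing with $1-\pPFgd{\Tref}$, which vanishes by Lemma~\ref{la_Fortin_ref}. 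The only genuinely new orthogonality relative to the reference lemma is the middle one, $\vdual{\bw}{\grad\div(1-\PFgd{T})\bv}_T=0$; I would handle this by observing that $\grad\div$ of a Piola-transformed field equals the Piola transform (with the right covariant/contravariant bookkeeping) of $\grad\div$ of the reference field, so testing against a constant $\bw$ on $T$ reduces to testing $(1-\pPFgd{\Tref})\hat\bv$ against a \emph{constant} on $\Tref$ — which is again covered by Lemma~\ref{la_Fortin_ref} (the first identity there, with the roles set up so that $\cP^0$ test functions see both $\bv$ and $\grad\div\bv$). This is exactly why Lemma~\ref{la_Fortin_ref} was stated for the reference element in a form strong enough to survive the pullback.

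Finally, the uniform boundedness $\|\PFgd{T}\bv\|_{\grad\div\!,T}\lesssim\|\bv\|_{\grad\div\!,T}$ follows from a standard scaling estimate: $\|\PFgd{T}\bv\|_T$ and $\|\grad\div\PFgd{T}\bv\|_T$ are expressed via powers of $\|B_T\|$, $\|B_T^{-1}\|$, $|\det B_T|$ times $\|\pPFgd{\Tref}\hat\bv\|_{\Tref}$ and $\|\grad\div\pPFgd{\Tref}\hat\bv\|_{\Tref}$, then bounded by $\|\pPFgd{\Tref}\|\,\|\hat\bv\|_{\grad\div\!,\Tref}$ using Lemma~\ref{la_Fortin_ref}, and transported back to $\|\bv\|_{\grad\div\!,T}$; shape-regularity makes all the $\diam(T)$-powers cancel. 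I expect the \textbf{main obstacle} to be precisely the correct Piola-type transformation rule for the $\grad\div$ operator: one must choose the pullback so that both $\bv\mapsto\hat\bv$ \emph{and} $\grad\div\bv\mapsto\grad\div\hat\bv$ intertwine cleanly, and track the resulting constant Jacobian factors so that they cancel in the orthogonality relations and produce only benign powers of $\diam(T)$ in the norm bound. Once that transformation is pinned down, the three vanishing identities and the stability estimate are routine scaling computations.
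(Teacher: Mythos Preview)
Your proposal has a genuine gap at the boundedness step. The claim that ``shape-regularity makes all the $\diam(T)$-powers cancel'' is false for the $\grad\div$ norm. Under the Piola map $\bv\leftrightarrow\hat\bv$ one has $\|\bv\|_T\simeq\|\hat\bv\|_{\Tref}$ but $\|\grad\div\bv\|_T\simeq\diam(T)^{-2}\|\grad\div\hat\bv\|_{\Tref}$, so the reference-element norm corresponds to the \emph{scaled} norm $\|\hat\bv\|_{\grad\div,\Tref}\simeq\|\bv\|_T+\diam(T)^2\|\grad\div\bv\|_T$. Transporting the bound $\|\pPFgd{\Tref}\hat\bv\|_{\grad\div,\Tref}\lesssim\|\hat\bv\|_{\grad\div,\Tref}$ back to $T$ therefore yields
\[
   \|\grad\div\pPFgd{T}\bv\|_T \lesssim \diam(T)^{-2}\|\bv\|_T + \|\grad\div\bv\|_T,
\]
which is \emph{not} uniformly bounded as $\diam(T)\to 0$. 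This is precisely the obstruction the paper works around, and it is the reason Lemma~\ref{la_Fortin_ref} alone is insufficient.

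The paper's remedy is to apply the Piola-transported operator $\pPFgd{T}$ only to a component of $\bv$ whose $L_2$-norm already carries a factor $\diam(T)^2$. Concretely, one first subtracts a lowest-order Raviart--Thomas interpolant $\cI\bv$ (with $\div\cI\bv=\Pi^0\div\bv$), then Helmholtz-decomposes the remainder as $\grad\phi+\bcurl\bpsi$. The curl part is handled by the standard $\Hdiv{T}$-Fortin operator $\Pi^{\mathrm{div}}_3$ (its divergence is zero, so there is no $\grad\div$ to control), while the gradient part satisfies $\|\grad\phi\|_T\lesssim\diam(T)^2\|\grad\div\bv\|_T$ by Poincar\'e and Bramble--Hilbert, which exactly absorbs the bad $\diam(T)^{-2}$ above. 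The final operator is $\PFgd{T}\bv:=\cI\bv+\Pi^{\mathrm{div}}_3\bcurl\bpsi+\pPFgd{T}\grad\phi$. Your treatment of the orthogonality relations is essentially fine (the middle one indeed follows from the trace orthogonality by integration by parts, since $\tracegd{T}(\cP^0(T)^2)\subset\cP^0(\partial T)\times\cP^{1,c}(\partial T)$), but without this decomposition the uniform stability fails.
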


\begin{proof}
In this proof, $\Pi^p$ denotes the $L_2$-projection onto polynomials of degree $p$ (on the domain
under consideration).
Let $T\in\cT$ and $\bv\in\Hgdiv{T}$ be given.
Furthermore, let %$F_T:\;\Tref\to T$ be an affine mapping
$\Tref\ni\widehat{\boldsymbol{x}}\mapsto B_T\widehat{\boldsymbol{x}}+\boldsymbol{a}_T\in T$
be an affine mapping with $\boldsymbol{a}_T\in\R^2$, $B_T\in\R^{2\times 2}$, and $J_T:=\det(B_T)$.

We will define $\PFgd{T}\bv$ by splitting $\bv$ into three components and treat them
differently. To this end we need three operators: $\cI$ in step (i) below,
$\Pi^\mathrm{div}_3$ in step (ii), already used in \S\ref{sec_discrete1}, and
$\pPFgd{T}$ in step (iii), based on $\pPFgd{\Tref}$.

\begin{enumerate}
\item[(i)]
Following Ern \emph{et al.} \cite[Theorem~3.2]{ErnGSV_ELG}, there is an interpolation operator
$\cI:\;\Hdiv{T}\to\mathcal{RT}^0(T):=
\{\bz:\;T\to\R^2;\;\exists a,b,c\in\R:\;\bz=(a,b)^T+c\boldsymbol{x}\ \forall\boldsymbol{x}\in T\}$
with
$\div\circ\cI=\Pi^0\circ\div$ on $\Hdiv{T}$ and boundedness
\begin{align} \label{I}
   \|\cI\bz\|_T\lesssim \|\bz\|_T + \diam(T)\|(1-\Pi^0)\div\bz\|_T
   \quad\forall\bz\in \Hdiv{T}
\end{align}
uniformly with respect to $\diam(T)$. Note that $\|\grad\div\cT\bz\|_T=0$ $\forall\bz\in\Hgdiv{T}$.

Then we write $\bv=\cI\bv + \bw$ with $\bw:=(1-\cI)\bv$ and use the Helmholtz decomposition
$\bw=\grad\phi+\bcurl\bpsi$ where $\phi\in H^1_0(T)$ solves $\Delta\phi=\div\bw$,
and $\bpsi\in\Hcurl{T}$ (with obvious definition of the space).
Integration by parts, the commuting property of $\cI$, the Bramble--Hilbert lemma and
Poincare's inequality prove that
\begin{align} \label{phi}
   \|\grad\phi\|_T^2 &= -\vdual{\div\bw}{\phi}_T = -\vdual{(1-\Pi^0)\div\bv}{\phi}_T\nonumber\\
   &\lesssim \|(1-\Pi^0)\div\bv\|_T \|\phi\|_T
   \lesssim \diam(T)^2\|\grad\div\bv\|_T \|\grad\phi\|_T,\nonumber\\
   \text{i.e.,}\qquad
   \|\grad\phi\|_T &\lesssim \diam(T)^2\|\grad\div\bv\|_T.
\end{align}

\item[(ii)]
We continue to use the Fortin operator
$\Pi^\mathrm{div}_{p}:\;\Hdiv{T}\to\cP^{p}(T)^2$ from \cite[Lemma~3.3]{GopalakrishnanQ_14_APD}
already employed in \S\ref{sec_discrete1}, in this case for $p=3$. It is uniformly bounded
in the $\Hdiv{T}$-norm and has the properties
\[
   \vdual{\dbu}{\Pi^\mathrm{div}_3\bz}_T = \vdual{\dbu}{\bz}_T,\quad
   \dualg{\dtu}{\Pi^\mathrm{div}_3\bz}_{\partial T} = \dualg{\dtu}{\bz}_{\partial T}
   \quad\forall\bz\in \Hdiv{T}
\]
for any $\dbu\in\cP^1(T)^2$ and $\dtu\in\cP^{2,c}(\partial T)$.
Furthermore, $\div\circ\Pi^\mathrm{div}_3=\Pi^2\circ\div$,
see~\cite[Proof of Lemma~3.3]{GopalakrishnanQ_14_APD}. On the one hand, this property implies that
\begin{align} \label{3}
   \dualgd{\dtbu}{\Pi^\mathrm{div}_3\bcurl\bpsi}_{\partial T} = \dualgd{\dtbu}{\bcurl\bpsi}_{\partial T}
   \quad\forall\dtbu\in \cP^0(\partial T)\times\cP^{1,c}(\partial T).
\end{align}
On the other hand, also using \eqref{I} and the Bramble--Hilbert lemma, we conclude that
\begin{align*}
   &\|\Pi^\mathrm{div}_3\bcurl\bpsi\|_T \lesssim \|\bcurl\bpsi\|_T + \|\div\bcurl\bpsi\|_T
   \le \|\bw\|_T \lesssim  \|\bv\|_T + \diam(T)^2\|\grad\div\bv\|_T,\\
   \text{and}\quad
   &\|\grad\div\Pi^\mathrm{div}_3\bcurl\bpsi\|_T = \|\grad\Pi^2\div\bcurl\bpsi\|_T = 0.
\end{align*}

\item[(iii)]
We transform the operator $\pPFgd{\Tref}$ from Lemma~\ref{la_Fortin_ref} through the
Piola map $\Tref\to T$. Denoting by $\wat{\bz}\in\Hgdiv{\Tref}$
the preimage of $\bz\in\Hgdiv{T}$ under the Piola transform, we define
\[
   \pPFgd{T}\bz:=|J_T|^{-1}B_T \pPFgd{\Tref}\wat{\bz}\circ F^{-1}_T.
\]
By scaling and Lemma~\ref{la_Fortin_ref} we bound
%\begin{subequations} \label{1}
\begin{align*}
   &\|\pPFgd{T}\bz\|_T \simeq \|\pPFgd{\Tref}\wat{\bz}\|_\Tref
   \lesssim \|\bz\|_T + \diam(T)^2\|\grad\div\bz\|_T,\\
   &\|\grad\div\pPFgd{T}\bz\|_T \simeq \diam(T)^{-2}\|\grad\div\pPFgd{\Tref}\wat{\bz}\|_\Tref
   \lesssim \diam(T)^{-2}\|\bz\|_T + \|\grad\div\bz\|_T.
\end{align*}
%\end{subequations}
These estimates together with \eqref{phi}, and noting that
$\grad\div\bw=\grad\div(1-\cI)\bv=\grad\div\bv$, yield
%\begin{subequations} \label{2}
\begin{align*}
   &\|\pPFgd{T}\grad\phi\|_T \lesssim \|\grad\phi\|_T + \diam(T)^2 \|\grad\div\bw\|_T
                           \lesssim \diam(T)^2\|\grad\div\bv\|_T,\\
   &\|\grad\div\pPFgd{T}\grad\phi\|_T \lesssim \diam(T)^{-2}\|\grad\phi\|_T + \|\grad\div\bw\|_T
                                     \lesssim \|\grad\div\bv\|_T.
\end{align*}
%\end{subequations}
\end{enumerate}

{\bf Definition of $\PFgd{T}\bv$.}
Finally, we define $\PFgd{T}\bv:=\cI\bv+\Pi^\mathrm{div}_3\bcurl\bpsi+\pPFgd{T}\grad\phi$.
By steps (i)--(iii) it is clear that this operator is uniformly bounded. It remains to verify
the orthogonality relations. By the orthogonality properties of $\Pi^\mathrm{div}_3$ and
$\pPFgd{T}$ we immediately verify that
\begin{align*}
   \vdual{\dbu}{\PFgd{T}\bv}_T=\vdual{\dbu}{\cI\bv+\Pi^\mathrm{div}_3\bcurl\bpsi+\pPFgd{T}\grad\phi}_T
   =\vdual{\dbu}{\cI\bv+\bcurl\bpsi+\grad\phi}_T=\vdual{\dbu}{\bv}_T
\end{align*}
for any $\dbu\in\cP^0(T)^2$ and, recalling Lemma~\ref{la_Fortin_ref} and \eqref{3},
\begin{align*}
   \dualgd{\dtbu}{\PFgd{T}\bv}_{\partial T}
   &=
   \dualgd{\dtbu}{\cI\bv+\Pi^\mathrm{div}_3\bcurl\bpsi+\pPFgd{T}\grad\phi}_{\partial T}\\
   &=
   \dualgd{\dtbu}{\cI\bv+\bcurl\bpsi+\grad\phi}_{\partial T}
   =
   \dualgd{\dtbu}{\bv}_{\partial T}
\end{align*}
for any $\dtbu\in\cP^0(\partial T)\times\cP^{1,c}(\partial T)$. The remaining
orthogonality relation is checked by using the one we have just seen, and trace operator $\tracegd{T}$:
\begin{align*}
   \vdual{\dbu}{\grad\div\PFgd{T}\bv}_T
   &= 
   \vdual{\grad\div\dbu}{\PFgd{T}\bv}_T + \dualgd{\tracegd{T}\dbu}{\PFgd{T}\bv}_{\partial T}\\
   &=
   \dualgd{\tracegd{T}\dbu}{\bv}_{\partial T}
   =
   \vdual{\dbu}{\grad\div\bv}_T \quad\forall\dbu\in\cP^0(T)^2
\end{align*}
since $\tracegd{T}(\cP^0(T)^2)\subset \cP^0(\partial T)\times\cP^{1,c}(\partial T)$.
\end{proof}

%===================================================================================================
\section{Numerical results} \label{sec_num}

We consider both DPG schemes for two examples in two dimensions,
one with smooth solution and one with a corner singularity. For both schemes we
use lowest order spaces, i.e., discrete spaces $\UU_h$ and $\VV_h(\cT)$ as in
\S\ref{sec_discrete1} with $p=0$ for the scheme based on the first-order system,
and discrete spaces as in \S\ref{sec_discrete2} for the method based on the second-order system.

\bigskip
{\bf Example with smooth solution.} In this case, we select $\Omega=(0,1)^2$, the
manufactured solution
$\bu(\boldsymbol{x}):=(x_1^2(x_1-1)^2x_2^2(x_2-1)^2,\sin^2(\pi x_1)\sin^2(\pi x_2))^T$
for $\boldsymbol{x}=(x_1,x_2)\in\Omega$, and calculate $\ff$ accordingly.

Considering a sequence of quasi-uniform triangular meshes, we expect both schemes
to deliver approximation errors of the order $O(h)=O(\dim(\UU_h)^{-1/2})$. This is
confirmed in Figure~\ref{fig_smooth}, with error curves for the first scheme on the left,
and the second on the right. Here and below, $\eta$ stands for the corresponding
residual $\|B\uu_h-L\|_{\VV_h(\cT)'}$, and note that $\uone=\bu$, $\bw=-\uthree=-\grad\div\bu$.

\begin{figure}
\begin{center}
\includegraphics[height=0.405\textwidth]{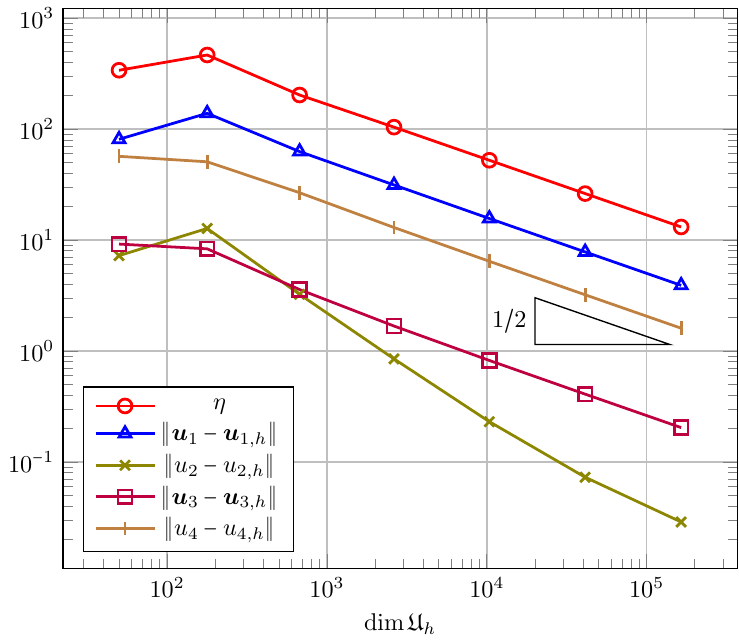}\quad
\includegraphics[height=0.4\textwidth]{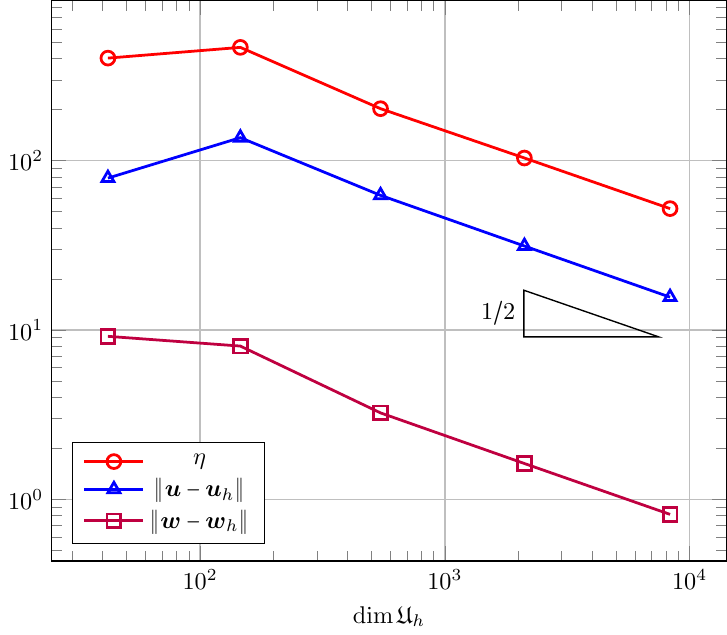}
\end{center}
\caption{Smooth example, quasi-uniform meshes. Left: first-order system, right: second-order system}
\label{fig_smooth}
\end{figure}

\bigskip
{\bf Example with singular solution.} We select the L-shaped domain
\[
   \Omega=\bigl\{(x_1,x_2);\; |x_1|+|x_2|< \frac{\sqrt{2}}4 \bigr\}\setminus
          \bigl\{(x_1,x_2);\; |x_1+\frac{\sqrt{2}}4|+|x_2|\le \frac{\sqrt{2}}4\bigr\}
\]
with initial mesh as on the left of Figure~\ref{fig_mesh}, solution $\bu:=\bcurl v$ with
$v(r,\varphi):=r^{2/3}\cos(2\varphi/3)$ ($(r,\varphi)$ denoting polar coordinates centered
at the origin so that $v=0$ on the boundary edges meeting at the origin),
and corresponding right-hand side function $\ff$.
This solution satisfies the boundary conditions $\div\bu=0$ on $\Gamma$ and
$\bu\cdot\nn=0$ along the edges meeting at the incoming corner. The non-homogeneous
trace of $\bu\cdot\nn$ along the other edges is implemented by approximation
(piecewise constant $L_2$-projection in $H^{-1/2}(\Gamma)$ and piecewise linear
interpolation in $H^{1/2}(\Gamma)$, with subsequent extensions to elements of the discrete
trace spaces).

\begin{figure}
\begin{center}
\scalebox{1.1}{\resizebox{0.4\textwidth}{0.5\textwidth}{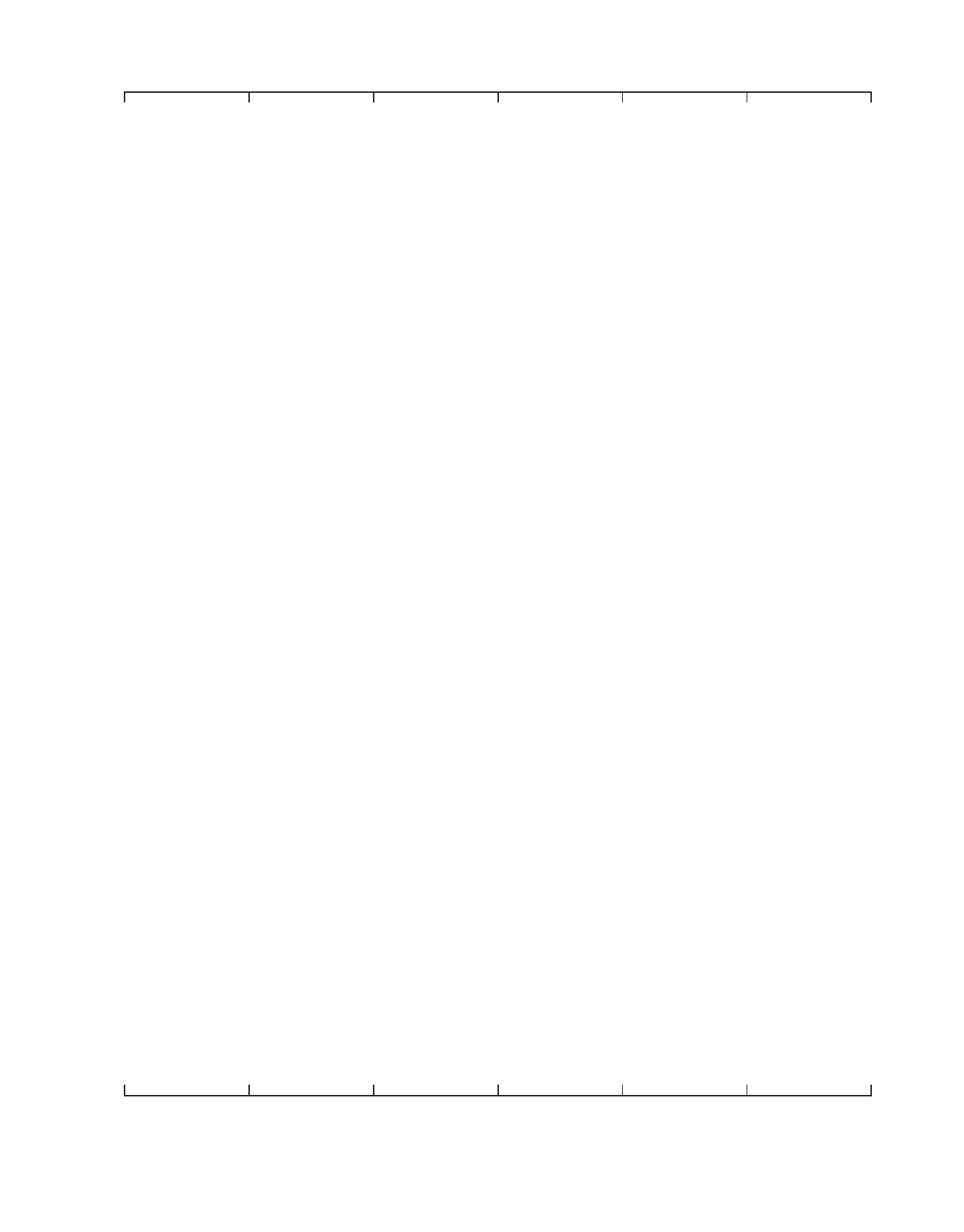}}
\scalebox{1.1}{\resizebox{0.4\textwidth}{0.5\textwidth}{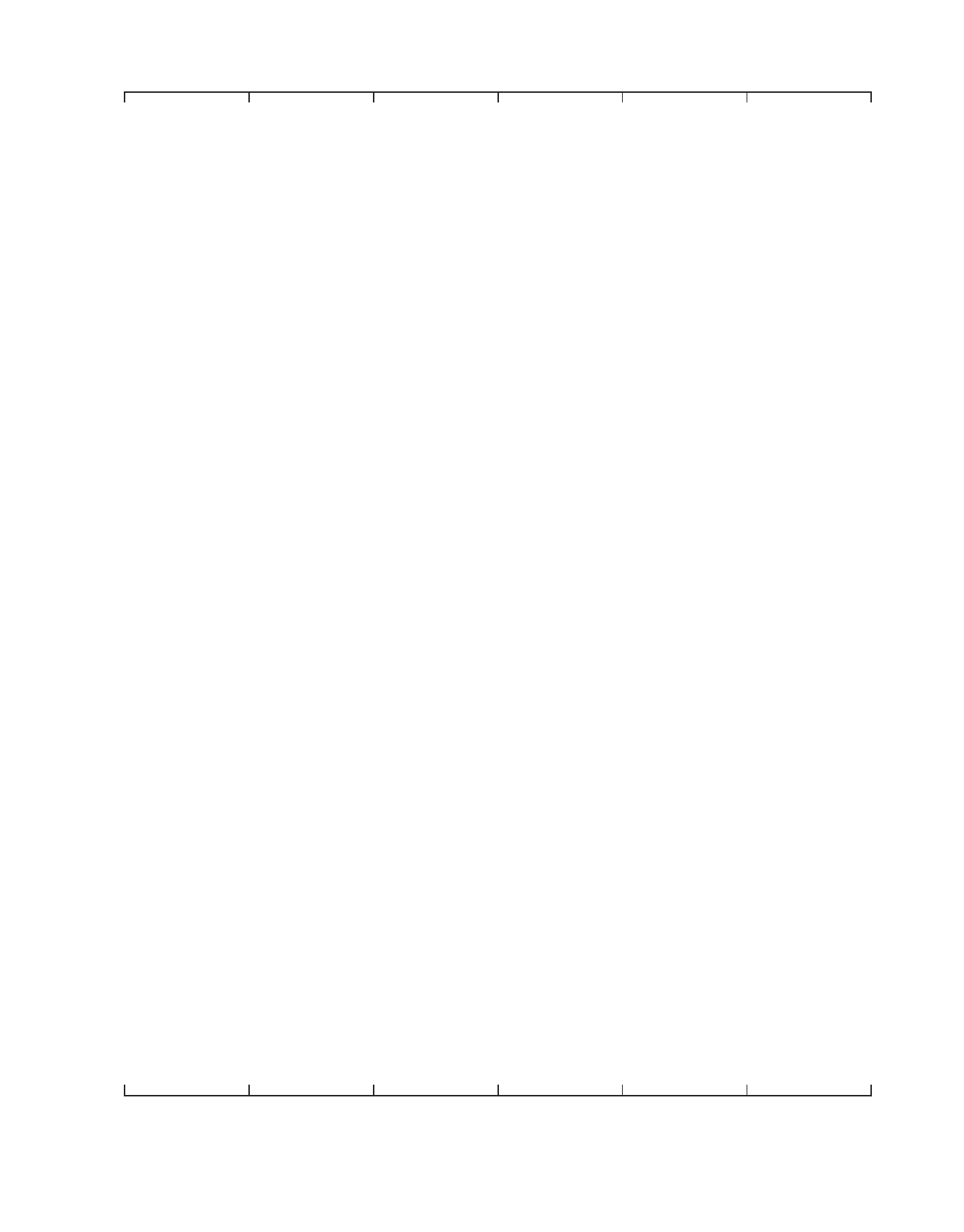}}
\end{center}
\caption{Initial mesh (left) and an adaptively refined mesh with 1674 elements}
\label{fig_mesh}
\end{figure}

When considering the first-order system \eqref{prob_sys1} we are dealing with solution
$\uone=\bu$ and zero components $\utwo,\uthree,\ufour$. In the case of the second-order
system \eqref{prob_sys2}, we have the solution $\bu$ and zero component $\bw$.
In any case, the first (and only) non-zero component (along with its canonical trace)
has a reduced regularity $\bu\in \bH^s(\Omega)$ with $s<2/3$, a standard Sobolev space.
Therefore, for a sequence of quasi-uniform meshes, we expect a reduced convergence
order $O(h^{2/3})=O(\dim(\UU_h)^{-1/3})$, whereas for a scheme with appropriate mesh refinement
at the incoming corner it should be of optimal order $O(\dim(\UU_h)^{-1/2})$.
The DPG method belongs to the family of minimum residual methods so that a posteriori
error estimation is inherent, cf.~\cite{CarstensenDG_14_PEC} for details.
We use the local residuals measured in $\VV_h(T)'$, $T\in\cT$,
to steer mesh refinements, based on newest-vertex-bisection and D\"orfler (or bulk)
marking with parameter $3/4$.
For both schemes, our numerical results confirm the reduced convergence order
when using quasi-uniform meshes, see Figure~\ref{fig_sing} with first scheme on the left and
second on the right, and illustrate optimal convergence of the adaptive variants,
see Figure~\ref{fig_sing_adap}. An adaptively refined mesh is shown in Figure~\ref{fig_mesh}.

\begin{figure}
\begin{center}
\includegraphics[height=0.4\textwidth]{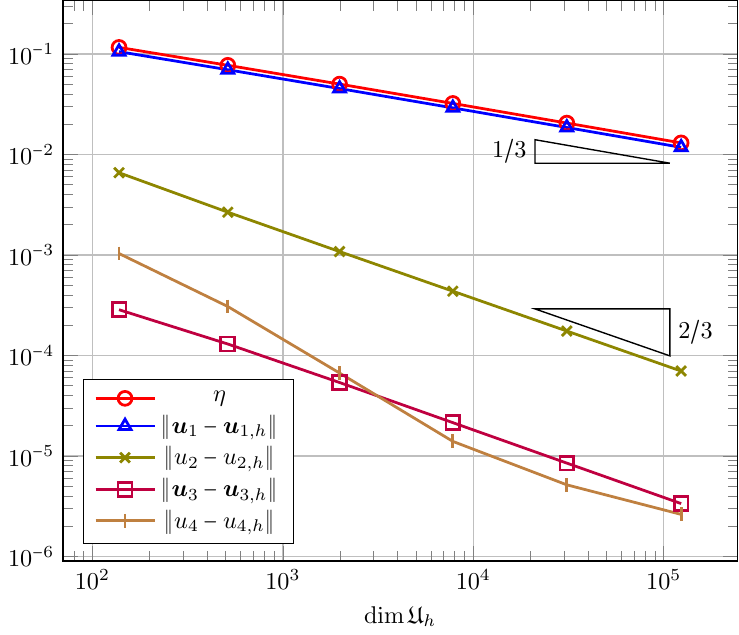}\quad
\includegraphics[height=0.4\textwidth]{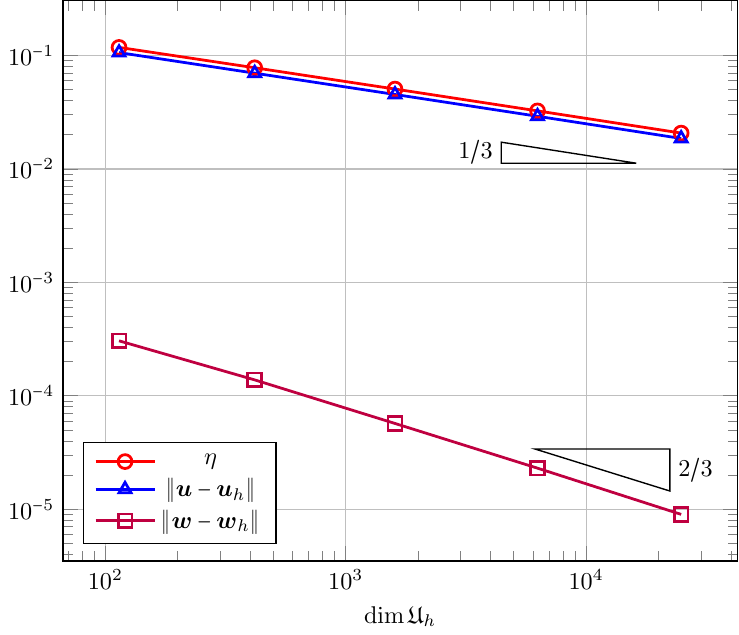}
\end{center}
\caption{Singular example, quasi-uniform meshes. Left: first-order system, right: second-order system}
\label{fig_sing}
\end{figure}

\begin{figure}
\begin{center}
\includegraphics[height=0.405\textwidth]{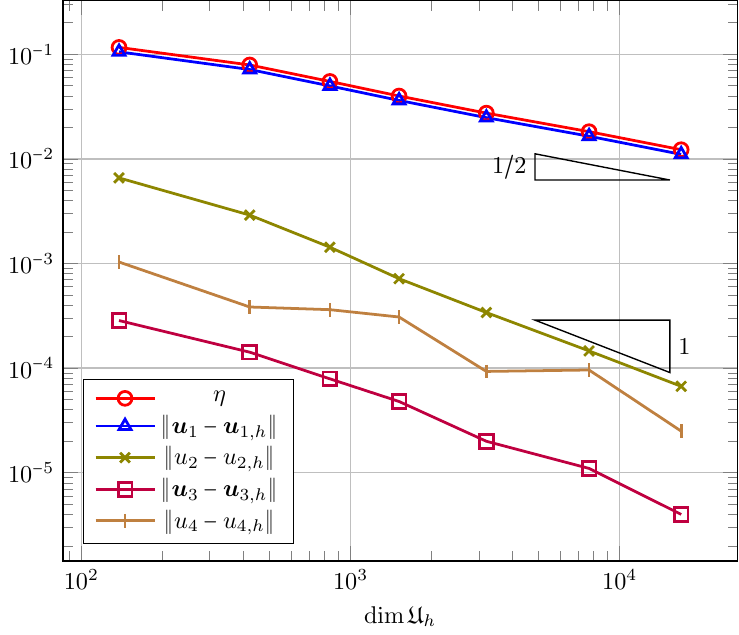}\quad
\includegraphics[height=0.4\textwidth]{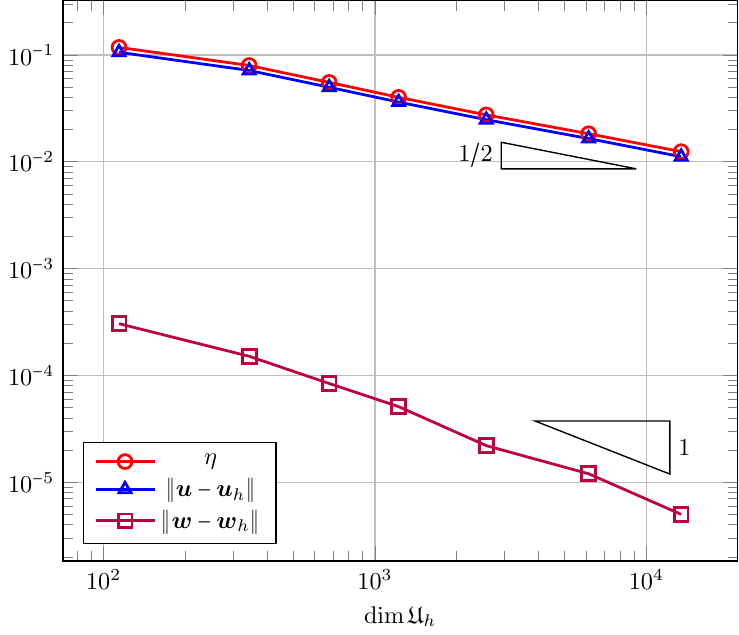}
\end{center}
\caption{Singular example, adaptively refined meshes. Left: first-order system, right: second-order system}
\label{fig_sing_adap}
\end{figure}

%===================================================================================================

\bibliographystyle{siam}
%\bibliography{/home/norbert/tex/bib/bib,/home/norbert/tex/bib/heuer}
\bibliography{}
\end{document}